\newcommand{\red}{\color{red}}
\renewcommand{\leq}{\le}
\renewcommand{\geq}{\ge}
\newcommand{\I}{\mathds 1}
\def\d{{\rm d}}
\def\<{\langle}
\def\>{\rangle}
\newtheorem{theorem}{Theorem}[section]
\newtheorem{lemma}[theorem]{Lemma}
\newtheorem{proposition}[theorem]{Proposition}
\numberwithin{equation}{section}
\theoremstyle{definition}
\newtheorem{definition}[theorem]{Definition}
\newtheorem{remark}[theorem]{Remark}
\begin{document}
\allowdisplaybreaks
\title[Geodesic of the critical long-range percolation metric]
{\bfseries  Uniqueness and dimension for the geodesic of the critical long-range percolation metric}

\author{Jian Ding \qquad  Zherui Fan  \qquad  Lu-Jing Huang}

\thanks{\emph{J. Ding:}
School of Mathematical Sciences, Peking University, Beijing, China.
  \texttt{dingjian@math.pku.edu.cn}}
\thanks{\emph{Z. Fan:}
School of Mathematical Sciences, Peking University, Beijing, China.
  \texttt{zheruifan@pku.edu.cn}}

\thanks{\emph{L.-J. Huang:}
School of Mathematics and Statistics \& Key Laboratory of Analytical Mathematics and Applications
(Ministry of Education), Fujian Normal University, Fuzhou, China.
  \texttt{huanglj@fjnu.edu.cn}}


\date{}
\maketitle

\begin{abstract}
By recent works of B\"aumler \cite{ Ba23} and of the authors of this paper \cite{DFH23+}, the (limiting) random metric for the critical long-range percolation was constructed. In this paper, we prove the uniqueness of the geodesic between two fixed points, for which an important ingredient of independent interest is the continuity of the metric distribution.  In addition, we establish the Hausdorff dimension of the geodesics. 

\noindent \textbf{Keywords:} Long-range percolation, random metric, geodesic, Hausdorff dimension

\medskip

\noindent \textbf{MSC 2020:} 60K35, 82B27, 82B43

\end{abstract}
\allowdisplaybreaks


\section{Introduction}\label{intro}

Consider the critical long-range percolation (LRP) on $\mathds{Z}^d$, where edges $\langle \bm i,\bm j\rangle $ with $\|\bm i-\bm j\|_\infty=1$ (i.e.\ $\bm i$ and $\bm j$ are nearest neighbors) occur independently with probability 1, while edges $\langle \bm i,\bm j\rangle $ with $\|\bm i-\bm j\|_\infty>1$ (which we will refer to as long edges) occur independently with probability given by
\begin{equation*}\label{LRP}
1-\exp\left\{-\beta\int_{V_{1}(\bm i)}\int_{V(\bm j)}\frac{1}{|\bm u-\bm v|^2}\d \bm u\d \bm v\right\}.
\end{equation*}
Here, $V_1(\bm i)$ represents the cube in $\mathds{R}^d$ centered at $\bm i$ with side length 1, $\|\cdot\|_\infty$ denotes the $\ell^\infty$-norm, and $|\cdot|$ represents the $\ell^2$-norm.
This model is commonly referred to as the critical LRP model or the $\beta$-LRP model.
In this paper, we are interested in its metric properties.

Regarding the metric properties of the critical LRP model, \cite{CGS02} first established that the typical diameter of a box grows at a polynomial rate with respect to its side length, with the growth rate being strictly less than 1.
Additionally, \cite{Ba23} showed (see also \cite{DS13} for the one-dimensional case) that the typical distance between two points $\bm 0$ and $n\bm 1$ (here $\bm 1=(1,1,\cdots, 1)\in \mathds{R}^d$) grows as $n^\theta$ for some $\theta\in(0,1)$. More precisely, denote by $d(\cdot,\cdot)$ the graph distance (also known as the chemical distance) in this LRP model. \cite{Ba23} proved that
\begin{equation}\label{def-theta}
n^\theta\asymp_P d(\bm 0,n\bm 1)\asymp_P \mathds{E}\left[d(\bm 0,n\bm 1)\right],
\end{equation}
where $\theta\in(0,1)$ is a constant  depending only on $d$ and $\beta$, and $A(n)\asymp_P B(n)$ means that for each $\varepsilon>0$, there exist constants $0<c<C<\infty$ such that $\mathds{P}[cB(n)\leq A(n)\leq CB(n)]\geq 1-\varepsilon$ for all $n\in \mathds{N}$.
It is worth emphasizing that this work implies the existence of the subsequential scaling limit of the graph distance. Recently, in \cite{DFH23+} we proved the uniqueness of such subsequential limits, thereby completing the construction of the limiting metric for the critical LRP.

\begin{theorem}[{\cite[Theorem 1.1]{DFH23+}}]
For any $n\in \mathds{N}$, let $a_n$ be the median of $d(\bm 0,n\bm 1)$ and $D_n(\cdot,\cdot)=a_n^{-1}d(\lfloor n\cdot\rfloor,\lfloor n\cdot\rfloor)$.
Then there exists a unique random metric $D$ on $\mathds{R}^d$ such that $D_n$ converges to $D$ in law with respect to the topology of local uniform convergence on $\mathds{R}^{2d}$.
\end{theorem}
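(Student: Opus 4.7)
The plan is to leverage the subsequential tightness that already follows from \eqref{def-theta} (B\"aumler's two-point estimate combined with chaining) and to direct the main effort at \emph{uniqueness} of the limit, which is the new content here.

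For tightness, the bound $\Ee[d(\bm 0, n\bm 1)] \asymp n^\theta$, translation invariance, and a union bound give equi-boundedness of $D_n(x,y)$ for $(x,y)$ in a compact $K \subset \R^{2d}$; a Kolmogorov-type modulus-of-continuity criterion (fed by moment estimates on $d(\bm x, \bm y)$ for close-by $\bm x, \bm y$) then upgrades this to equicontinuity and hence tightness of $\{D_n\}$ in $C(K)$ under local uniform convergence. Subsequential limits $D$ thus exist.

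For uniqueness, I would adopt a multi-scale coupling strategy. The long-edge intensity $\beta/|u-v|^2\,\d u\, \d v$ is scale-invariant, so one can couple the edge configurations underlying $D_n$ and $D_{Nn}$ in such a way that the macroscopic long edges (those spanning distances $\gg 1$ at the $n$-scale) are in bijection via dilation, while only microscopic edges differ. The aim is to show that the microscopic contribution to the rescaled metric is negligible, so that $D_n$ is asymptotically a deterministic function of the macroscopic Poisson data, plus a vanishing error. A Cauchy-in-distribution argument in $C(K)$ then forces all subsequential limits to coincide, identifying the unique limit $D$.

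The main obstacle, as foreshadowed by the abstract, is establishing the continuity of the metric distribution, i.e.\ that $\Pp(D_n(x,y) \in [t, t+\varepsilon]) = o_\varepsilon(1)$ uniformly in $n$ for fixed $x \neq y$. Without such anti-concentration, one cannot cleanly transfer the macroscopic coupling into convergence of laws of $D_n$ via the portmanteau theorem, since weak convergence only controls probabilities of continuity sets. I expect this step to require an Efron--Stein-type sensitivity analysis identifying how individual edges affect the (almost-unique) geodesic, bootstrapped across dyadic scales, perhaps combined with a block decomposition that reduces $D_n$ to a finite-range functional of the edge configuration up to a controllable error term.
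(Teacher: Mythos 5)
This statement is not proved in the present paper at all: it is quoted verbatim from \cite{DFH23+}, and the present paper only uses it as input. So the relevant comparison is with the strategy of \cite{DFH23+} as described here. Your tightness step is consistent with what is actually done (existence of subsequential limits follows from B\"aumler's estimates \eqref{def-theta} together with chaining/moment bounds, as the introduction notes), but your uniqueness step is both a different route from the real one and contains the essential gap. The actual proof of uniqueness in \cite{DFH23+} does not proceed by a multi-scale coupling of $D_n$ with $D_{Nn}$ and a Cauchy-in-distribution argument; as the paper states explicitly in Section \ref{sect-T1}, it adapts the axiomatic characterization framework developed for the LQG metric \cite{GM21, DG23}: one shows every subsequential limit satisfies a list of axioms (locality, translation and scale covariance, tightness, \dots) and then proves that any two random metrics satisfying these axioms agree in law. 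Your proposed alternative hinges on the assertion that ``$D_n$ is asymptotically a deterministic function of the macroscopic Poisson data, plus a vanishing error,'' which is essentially the entire difficulty restated rather than an argument; no mechanism is offered for why the microscopic edges contribute negligibly to the metric (they do not in any naive sense, since nearest-neighbour edges are always present and carry a positive fraction of the distance at every scale).

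A second, more concrete error: you identify ``continuity of the metric distribution'' as the main obstacle to proving this convergence theorem. That is a misattribution. The convergence theorem was proved in \cite{DFH23+} \emph{without} any such anti-concentration statement; continuity of the distribution of $D(\bm 0,\bm x)$ is Theorem \ref{thm-continuity} of the \emph{present} paper, is new, is proved here by a Sperner-family argument precisely because the Poissonian disorder is discrete, and is used as an ingredient for the uniqueness of geodesics (Theorem \ref{thm-unique}), not for the convergence of $D_n$. Weak convergence of $D_n$ to a limit does not require the limit law to be atomless, so this is not where the difficulty of the quoted theorem lies.
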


Furthermore, we have established a set of axioms for the random metric $D$, which will be referred to as the $\beta$-LRP metric.
These axioms emcompass properties such as locality, translation and scale covariance, tightness and others,
see \cite[Sections 1.3 and 1.4]{DFH23+}. In particular, we have provided an upper bound for the Hausdorff dimension of the $\beta$-LRP metric.
This upper bound has recently been shown by \cite{Ba23a} to be the lower bound, thereby providing a definitive value for the Hausdorff dimension of the $\beta$-LRP metric.
Additionally, \cite{Ba23a} has discussed further properties related to the balls in the graph metric.
In this paper, we will further explore some other important properties of the $\beta$-LRP metric $D$.

Our first main result establishes the continuity of the distribution of the $\beta$-LRP metric $D$, as detailed in the following theorem. Let $\mathds{S}=\{\bm x\in \mathds{R}^d:\ |\bm x|=1\}$.

\begin{theorem}[Continuity of the distribution of $\beta$-LRP metric]\label{thm-continuity}
For all $d\geq 1$ and $\beta>0$, we have
\begin{equation*}
\lim_{\varepsilon\to 0^+}\mathds{P}\left[D({\bm 0},\bm x)\in (a-\varepsilon,a+\varepsilon)\right]=0
\end{equation*}
for all $a\geq 0$ and $\bm x\in \mathds{S}$.
\end{theorem}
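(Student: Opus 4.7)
Since $D$ is a random metric and $\bm x \neq \bm 0$ for $\bm x \in \mathds S$, the case $a = 0$ reduces to the almost sure positivity $D(\bm 0, \bm x) > 0$, which is immediate from the axiom that $D$ is a metric. The real content of the theorem is to rule out atoms at $a > 0$.

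My plan is to prove a pointwise anti-concentration estimate for the prelimit graph distance and transfer it through the scaling limit. Write $d_n := d(\bm 0, \lfloor n\bm x\rfloor)$, so that $D_n(\bm 0, \bm x) = a_n^{-1} d_n$ converges in distribution to $D(\bm 0, \bm x)$. Since $d_n$ is integer-valued and the interval $((a-\varepsilon)a_n, (a+\varepsilon)a_n)$ contains at most $2\varepsilon a_n + 1$ integers, the Portmanteau inequality applied to the open set $(a-\varepsilon, a+\varepsilon)$ yields
\begin{equation*}
\mathds P\big[D(\bm 0, \bm x) \in (a-\varepsilon, a+\varepsilon)\big] \leq \liminf_{n\to\infty} (2\varepsilon a_n + 1)\, \sup_{k}\mathds P[d_n = k].
\end{equation*}
Hence the theorem reduces to the prelimit bound $\sup_{k}\mathds P[d_n = k] \leq C/a_n$ (a slightly weaker bound, with polylogarithmic corrections, would also suffice, by sending $n\to\infty$ before $\varepsilon\to 0$).

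To establish this anti-concentration bound, I would use a local CLT / Esseen-type argument applied to disjoint resampling cells. Specifically, I would construct $m_n = \Theta(a_n^2)$ disjoint boxes $R_1, \ldots, R_{m_n}$ located in $[-n, 2n]^d$, and within each $R_i$ exhibit two configurations of long edges (``plus'' and ``minus'', differing only on a small set of long edges with both endpoints in $R_i$), each having conditional probability bounded below, and with the property that on a positive-conditional-probability event, swapping from one configuration to the other shifts $d_n$ by exactly $\pm 1$. Because the cells are disjoint and the LRP configuration factorises over disjoint edge sets, the $\pm 1$ perturbations are conditionally mutually independent; the Esseen concentration inequality for sums of independent integer-valued random variables then gives $\sup_{k}\mathds P[d_n = k\mid \mathcal F_n] \leq C/\sqrt{m_n} = C/a_n$ on the conditioning event, and the same bound unconditionally upon integrating.

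The main obstacle will be the construction of the cells: I need $m_n \gtrsim a_n^2$ disjoint regions which the graph geodesic genuinely ``feels'' with positive probability, and within each, a local modification of the LRP configuration which reliably shifts the distance by one edge. The natural strategy is to exploit the multi-scale structure of the geodesic developed in \cite{Ba23}, \cite{DFH23+} and \cite{Ba23a}: at each intermediate scale $r$, the geodesic from $\bm 0$ to $\lfloor n\bm x\rfloor$ passes through many bottleneck regions of diameter $r$, and within each bottleneck one may try to exhibit a $\pm 1$ perturbation obtained by turning a single long edge on or off together with a compensating rerouting through the deterministic nearest-neighbor edges. Aggregating such contributions over a dyadic range of scales should deliver the required number of cells. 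The two delicate points are (i) the conditional independence of the $\pm 1$ events across cells, which requires carefully conditioning on appropriate boundary data and invoking locality, and (ii) ensuring the individual $\pm 1$ perturbations are not washed out by simultaneous rerouting of the geodesic around other cells, which will likely demand a quantitative ``stability of the geodesic under local resampling'' estimate.
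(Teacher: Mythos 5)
Your reduction via Portmanteau to a prelimit anti-concentration statement is fine, but the bound you reduce to, $\sup_k \mathds{P}[d_n=k]\le C/a_n$, is far stronger than what the theorem needs (it would give a bounded density for $D(\bm 0,\bm x)$, not merely the absence of atoms), and the construction you propose to prove it cannot work as stated. The obstruction is a counting one: a graph geodesic from $\bm 0$ to $\lfloor n\bm x\rfloor$ has $d_n\asymp a_n$ edges and hence meets at most $O(a_n)$ disjoint cells, so no local $\pm 1$ resampling scheme can produce more than $m_n=O(a_n)$ cells that the distance actually ``feels''. With $m_n=O(a_n)$, the Esseen/Kolmogorov--Rogozin bound gives only $\sup_k\mathds{P}[d_n=k]\lesssim a_n^{-1/2}$, and then $(2\varepsilon a_n+1)\,a_n^{-1/2}\to\infty$, so the argument collapses quantitatively. (Your parenthetical remark that polylogarithmic corrections would suffice is also wrong for the same reason: $(2\varepsilon a_n+1)(\log a_n)^{A}/a_n\to\infty$ as $n\to\infty$ for fixed $\varepsilon$.) Moreover, the two ``delicate points'' you flag are not technicalities: the effect on $d_n$ of resampling cell $i$ depends on the configuration in all other cells, since the geodesic reroutes globally, so $d_n$ is not a base value plus a sum of independent integer shifts, and no conditioning on boundary data restores this additive structure.

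The paper's proof sidesteps both problems by working at scale $\varepsilon$ rather than scale $1$, and by replacing the local CLT with a Sperner/antichain argument that requires no additivity and no exact control of the shift. One fixes $K=\sqrt{\log(1/\varepsilon)}$ nested annuli around $\bm 0$ with geometrically growing widths $M_i$ satisfying $M_1^\theta\asymp\varepsilon$, and for each annulus a Bernoulli variable recording whether all (event $E_{i,4}$) or none (event $E_{i,3}$) of a bounded family of potential shortcut edges across the annulus are present. Conditionally on everything else, toggling any one of these bits moves $D(\bm 0,\bm x)$ monotonically by at least $2\varepsilon$ (adding the shortcuts shortens the distance by at least $c_{*,1}M_i^\theta$ minus small diameter errors; removing them lengthens it), so the event $\{D(\bm 0,\bm x)\in(a-\varepsilon,a+\varepsilon)\}$ is a Sperner family in these $K$ bits, and the generalized LYM inequality (Proposition \ref{prop-sperner}) bounds its conditional probability by $O(K^{-1/2})=O((\log(1/\varepsilon))^{-1/4})$. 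This is a much weaker quantitative bound than a local limit theorem, but it only needs a one-sided, non-additive ``definite displacement'' per toggled bit, which is exactly what can be engineered here. To salvage your route you would need either perturbations whose individual spread grows with $n$ (a multi-scale Esseen argument), or to abandon the scale-$1$ target and aim directly at scale $\varepsilon a_n$ --- at which point you are essentially reconstructing the paper's argument.
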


\begin{remark}
Due to the translation and scale covariance of the LRP metric established in \cite[Axiom IV in Section 1.3]{DFH23+}, the statement in Theorem \ref{thm-continuity} holds for all distinct $\bm x,\bm y\in \mathds{R}^d$.
\end{remark}

Next, we focus on the geodesics of the $\beta$-LRP metric $D$.
For convenience, we will refer to a $D$-geodesic as the geodesic associated with the metric $D$.

\begin{theorem}[Uniqueness of $D$-geodesics] \label{thm-unique}
For all $d\geq 1$,  $\beta>0$ and for any fixed pair of distinct points ${\bm x},{\bm y}\in \mathds{R}^d$, almost surely, there exists a unique $D$-geodesic from ${\bm x}$ to ${\bm y}$.
\end{theorem}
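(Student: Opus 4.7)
The strategy is to combine the atomless (continuous) marginal from Theorem~\ref{thm-continuity} with the locality axiom of the $\beta$-LRP metric to rule out the coexistence of two disjoint geodesic segments between fixed endpoints.

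Parametrize each $D$-geodesic by arc length, so $\gamma:[0,T]\to\mathds R^d$ satisfies $D(\gamma(s),\gamma(t))=|s-t|$ and, in particular, $D(\bm x,\gamma(t))=t$. Suppose, for contradiction, that with positive probability there are two distinct $D$-geodesics $\gamma_1\neq\gamma_2$ from $\bm x$ to $\bm y$. The set $S:=\{t:\gamma_1(t)=\gamma_2(t)\}$ is then closed and strictly smaller than $[0,T]$, so its complement contains a maximal open interval $(s,t)$; setting $\bm p:=\gamma_1(s)$ and $\bm q:=\gamma_1(t)$, the segments $\gamma_1|_{[s,t]}$ and $\gamma_2|_{[s,t]}$ are two $D$-geodesics from $\bm p$ to $\bm q$ whose images meet \emph{only} at $\{\bm p,\bm q\}$. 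Indeed, any coincidence $\gamma_1(r)=\gamma_2(r')$ forces $D(\bm x,\gamma_1(r))=D(\bm x,\gamma_2(r'))$, i.e.\ $r=r'$, so an interior intersection would contradict the definition of $S$.

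A countable-union argument (over rational endpoints $\bm p,\bm q\in\mathds Q^d$ and a countable family of disjoint open set pairs $U_1,U_2\subset\mathds R^d$, each containing a continuous path from $\bm p$ to $\bm q$) reduces the problem to showing $\mathds P(E)=0$ for each fixed $(\bm p,\bm q,U_1,U_2)$, where $E=E_{\bm p,\bm q,U_1,U_2}$ is the event that for each $i\in\{1,2\}$ there exists a $D$-geodesic from $\bm p$ to $\bm q$ with image in $\overline{U_i}\cup\{\bm p,\bm q\}$. For each $i$ define the restricted distance
\[
D_{U_i}(\bm p,\bm q):=\inf\bigl\{\mathrm{length}_D(\eta):\eta\text{ is a continuous path from }\bm p\text{ to }\bm q\text{ in }\overline{U_i}\cup\{\bm p,\bm q\}\bigr\}.
\]
On $E$ both infima equal $D(\bm p,\bm q)$. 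By the locality axiom of the $\beta$-LRP metric (\cite[Axiom III, Section~1.3]{DFH23+}), $D_{U_i}(\bm p,\bm q)$ is a measurable function of the LRP data supported in $U_i$; since $U_1\cap U_2=\emptyset$, these data come from disjoint edge families of an independent product, so $D_{U_1}(\bm p,\bm q)$ and $D_{U_2}(\bm p,\bm q)$ are independent. Moreover, Theorem~\ref{thm-continuity} together with the scale and translation covariance of $D$ implies that each $D_{U_i}(\bm p,\bm q)$ has an atomless distribution. Hence $D_{U_1}(\bm p,\bm q)-D_{U_2}(\bm p,\bm q)$ is atomless at $0$, so $\mathds P(E)\leq\mathds P(D_{U_1}(\bm p,\bm q)=D_{U_2}(\bm p,\bm q))=0$, the desired contradiction.

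I expect the main obstacle to be justifying independence and atomlessness of the restricted distances $D_{U_i}(\bm p,\bm q)$: the LRP model contains arbitrarily long edges whose endpoints may straddle $\partial U_i$, and these must be handled carefully in the locality statement, possibly by first conditioning on the edges incident to $U_i$ from outside. The atomlessness of $D_{U_i}(\bm p,\bm q)$ does not follow immediately from Theorem~\ref{thm-continuity}, which addresses the unrestricted distance between a fixed pair of points; a separate argument (e.g.\ comparing $D_{U_i}(\bm p,\bm q)$ to an unrestricted distance inside a rescaled copy of $U_i$ via scale covariance, or directly re-running the proof of Theorem~\ref{thm-continuity} inside a domain) will be needed to transfer the no-atom property to $D_{U_i}$.
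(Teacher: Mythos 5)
Your strategy (bubble decomposition plus independence of internal metrics in disjoint domains) is genuinely different from the paper's, but it has a gap at the countable-union step that I do not see how to close with the tools at hand. The branch point $\bm p=\gamma_1(s)$ and merge point $\bm q=\gamma_1(t)$ are \emph{random} points determined by the geodesics themselves, and almost surely they do not lie in $\mathds Q^d$. The event $E_{\bm p,\bm q,U_1,U_2}$ (``for each $i$ there is a $D$-geodesic from $\bm p$ to $\bm q$ with image in $\overline{U_i}\cup\{\bm p,\bm q\}$'') is neither monotone nor stable under perturbing $\bm p$ and $\bm q$: moving to a nearby rational point destroys the property of being a geodesic endpoint and of lying in $\overline{U_1}\cap\overline{U_2}$. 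Hence the union over your countable family of $(\bm p,\bm q,U_1,U_2)$ does not contain the bad event, and $\mathds P(E_{\bm p,\bm q,U_1,U_2})=0$ for each fixed tuple does not imply uniqueness. This is exactly the step where, in the analogous LQG arguments, substantial extra input is required (confluence of geodesics, or a Fubini argument that only yields uniqueness for endpoints sampled from an independent measure rather than for fixed points). A secondary, related problem is the one you flag yourself: the atomlessness of $D_{U_i}(\bm p,\bm q)$ is not a routine corollary of Theorem \ref{thm-continuity}, since that proof performs surgery (cut/shortcut events) inside a chain of annuli centred at one endpoint, and for a general domain $U_i$ that may be thin near $\bm p$ these annuli and the shortcut paths need not stay inside $U_i$.

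For contrast, the paper avoids random branch points entirely by perturbing at a single long edge rather than along a whole bubble. If two geodesics from $\bm x$ to $\bm y$ differ, one of them uses a long edge $\langle\bm u,\bm v\rangle$ (from the countable set $\mathcal E_{\geq q}$, $q\in\mathds Q_+$) that the other does not. One conditions on all edges except those inside a small rational ball $B_r(\bm v)$; any geodesic through $\langle\bm u,\bm v\rangle$ must then exit $B_r(\bm v)$ through a point $\bm z$ of a countable, conditionally deterministic set $Z_r(\bm v)$, so the tie $\mathrm{len}(Q_{\bm u\bm v\bm z}(\bm x,\bm y);D)=D(\bm x,\bm y;B_r(\bm v)^c)$ is ruled out by applying the continuity-of-distribution argument to the internal distance $D(\bm v,\bm z;B_r(\bm v))$, where the restricting domain is a ball centred at the perturbation point so that the annulus construction goes through verbatim. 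All the unions involved (over $q$, over edges, over $r$, over $\bm z$) are genuinely countable, which is precisely what your reduction is missing.
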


We now consider the Hausdorff dimension of $D$-geodesics.
To this end, we begin with some general notations.
Let $(Y, \widehat{D})$ be a metric space.
For any $\Delta > 0$, the $\Delta$-Hausdorff content of $(Y, \widehat{D})$ is defined by
\begin{equation}\label{hausdorff}
	C_\Delta(Y,\widehat{D})=\inf\left\{\sum_{j=1}^\infty r_j^\Delta:\text{ there is a cover of $Y$ by $\widehat{D}$-balls of radii $\{r_j\}_{j\geq 1}$}\right\}.
\end{equation}
The Hausdorff dimension of $(Y, \widehat{D})$ is then defined as
$$
\text{dim}_{\rm H}(Y;\widehat{D})=\inf\left\{\Delta > 0 : C_\Delta(Y, \widehat{D}) = 0\right\}.
$$
For simplicity, if $Y$ is a subset of $\mathds{R}^d$ and $\widehat{D}$ is the Euclidean distance, we denote the Hausdorff dimension of $(Y,\widehat{D})$ by $\mathrm{dim}_{\rm{H}}(Y)$.

In the following, we will take $Y={\rm Range}(P)$ for some $D$-geodesic $P$. 
The following theorem provides the value of $\mathrm{dim}_{\rm H}(\mathrm{Range}(P))$ for each $D$-geodesic $P$.

\begin{theorem}\label{mr-dim}
	For all $d\geq 1$ and $\beta>0$, almost surely, we have $\mathrm{dim}_{\mathrm{H}}(\mathrm{Range}(P))=\theta$ for all $D$-geodesics $P$, where $\theta \in (0,1)$ {\rm(}depending only on $d$ and $\beta${\rm)} is the exponent defined in \eqref{def-theta}.
\end{theorem}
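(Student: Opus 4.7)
The plan is to prove the matching bounds $\mathrm{dim}_{\mathrm{H}}(\mathrm{Range}(P)) \le \theta$ and $\mathrm{dim}_{\mathrm{H}}(\mathrm{Range}(P)) \ge \theta$ for every $D$-geodesic $P$. Both rely on the same key input: for each $\eta > 0$ and each compact $K \subset \R^d$, there exist (random) constants $c_\eta, C_\eta > 0$ such that
\begin{equation*}
c_\eta |\bm u - \bm v|^{\theta + \eta} \le D(\bm u, \bm v) \le C_\eta |\bm u - \bm v|^{\theta - \eta} \quad \text{for all } \bm u, \bm v \in K.
\end{equation*}
These almost sure H\"older-type comparisons between $D$ and the Euclidean metric should follow by combining the scaling covariance axiom of \cite{DFH23+}, polynomial upper and lower tail bounds on the ratio $D(\bm u, \bm v)/|\bm u - \bm v|^\theta$ refining \eqref{def-theta}, a union bound over a dyadic net of pairs $(\bm u, \bm v)$ at each scale, and a Borel--Cantelli argument over scales. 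I expect this step to be the main obstacle: one must upgrade the distributional scaling $a_n \asymp n^\theta$ into a pointwise, uniform almost sure statement, which needs quantitative tail control rather than just the limit law.

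For the upper bound $\mathrm{dim}_{\mathrm{H}}(\mathrm{Range}(P)) \le \theta$, I would parametrize $P$ by $D$-arclength as $\gamma: [0, L] \to \R^d$ with $L = D(\bm x, \bm y)$ and $D(\gamma(s), \gamma(t)) = |s - t|$, split $[0, L]$ into $N$ intervals $I_k$ of length $L/N$, and observe that each $\gamma(I_k)$ lies in the $D$-ball of radius $L/N$ centered at $\gamma((k-1)L/N)$. The left-hand H\"older inequality forces this $D$-ball to have Euclidean diameter at most $(2L/(c_\eta N))^{1/(\theta + \eta)}$, so the induced $\Delta$-Hausdorff content from \eqref{hausdorff} is of order $N^{1 - \Delta/(\theta + \eta)}$. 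This vanishes as $N \to \infty$ for every $\Delta > \theta + \eta$, and $\eta \to 0$ gives $\mathrm{dim}_{\mathrm{H}}(\mathrm{Range}(P)) \le \theta$.

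For the matching lower bound, I would apply Frostman's mass distribution principle to the push-forward measure $\mu = \gamma_\ast \leb|_{[0, L]}$, which has total mass $L > 0$ and is supported on $\mathrm{Range}(P)$. For an arbitrary Euclidean ball $B(\bm z, r)$, let $[s_1, t_1], \dots, [s_m, t_m]$ enumerate the maximal intervals on which $\gamma$ lies in $\overline{B(\bm z, r)}$, ordered so that $s_1 < t_1 < \dots < s_m < t_m$. Since $\gamma$ is a $D$-geodesic, $t_m - s_1 = D(\gamma(s_1), \gamma(t_m))$, which is bounded by the $D$-diameter of $\overline{B(\bm z, r)}$; the right-hand H\"older inequality dominates this in turn by $C_\eta (2r)^{\theta - \eta}$, so that
\begin{equation*}
\mu(B(\bm z, r)) = \sum_{i=1}^m (t_i - s_i) \le t_m - s_1 \le C_\eta (2r)^{\theta - \eta}.
\end{equation*}
Frostman then yields $\mathrm{dim}_{\mathrm{H}}(\mathrm{Range}(P)) \ge \theta - \eta$, and $\eta \to 0$ finishes the proof. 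The whole argument applies to every $D$-geodesic simultaneously once the H\"older bounds are taken uniform on a sequence of compacts exhausting $\R^d$.
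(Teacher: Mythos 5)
Your lower bound for the dimension is correct and is essentially the paper's own argument: the one-sided H\"older comparison $D(\bm u,\bm v)\le C_\eta\|\bm u-\bm v\|_{\ell^\infty}^{\theta-\eta}$ on compacts does hold (it is \cite[Proposition~1.13]{DFH23+}, packaged in the paper as Lemma~\ref{F-low}), and pushing forward Lebesgue measure along the arclength parametrization of a geodesic and invoking the mass distribution principle gives $\mathrm{dim}_{\mathrm{H}}(\mathrm{Range}(P))\ge\theta-\eta$ exactly as you describe.

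The upper bound, however, has a genuine gap: the left-hand H\"older inequality $D(\bm u,\bm v)\ge c_\eta|\bm u-\bm v|^{\theta+\eta}$, which you correctly flag as the main obstacle, is not merely hard to prove --- it is false. Unlike the LQG metric, $D$ is not a local length metric: the model contains long edges, a Poisson collection of pairs $(\bm z_1,\bm z_2)$ with $|\bm z_1-\bm z_2|$ of macroscopic size whose traversal costs $1/a_n\to 0$ in the scaling limit. Hence every compact set contains pairs of points at unit Euclidean distance with arbitrarily small $D$-distance, a $D$-ball of radius $L/N$ around an endpoint of such an edge contains a neighbourhood of the far endpoint and so does \emph{not} have Euclidean diameter $O(N^{-1/(\theta+\eta)})$, and a sub-arc $\gamma(I_k)$ of $D$-length $L/N$ can have Euclidean diameter of order $1$. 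Your covering of $\mathrm{Range}(P)$ therefore fails at the first step. The paper's upper bound replaces it with a renormalization argument: partition $[-L,L]^d$ into boxes of side $\delta L$; show via the ``good box'' events of Definition~\ref{h-good}, a BK-inequality estimate over connected lattice animals (Lemmas~\ref{good-tree} and~\ref{num-tree}), and an exponential tail bound on $\mathrm{diam}([-L,L]^d;D)$ that with high probability any geodesic visits at most $\delta^{-\theta-\varepsilon}$ boxes, because a positive fraction of the distinct boxes it visits each force it to accrue $D$-length at least $(b\alpha\delta L)^\theta$ even when it enters and exits through long edges; then cover the range by those boxes. Repairing your upper bound essentially requires reproducing this box-counting step; no pointwise lower comparison of $D$ with a power of the Euclidean metric is available.
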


The organization of this article is as follows.
In Section \ref{sperner}, we present a generalized version of a Sperner's theorem. Using this as an important ingredient, we prove Theorem \ref{thm-continuity} in Section \ref{sect-T1}, from which we derive Theorem \ref{thm-unique} in Section \ref{sect-T2}. Finally, in Section \ref{sect-dim}, we provide the proof of Theorem \ref{mr-dim}.

\bigskip

\textbf{Notational conventions.}
We denote $\mathds{N}=\{1,2,\cdots\}$. For $a<b$, we define $[a,b]_\mathds{Z}=[a,b]\cap \mathds{Z}$ and $\mathds{Q}_+$ as the set of positive rational numbers.
Write $\textbf{1}=(1,1,\cdots, 1)\in \mathds{R}^d$. Throughout the paper, we use the boldface font (e.g.\ $\bm x,\bm y,\bm u, \bm v, \bm i, \bm j$) to represent vectors on $\mathds{R}^d$. For any $r>0$ and $\bm x\in \mathds{R}^d$,  denote by $B_r(\bm x)$ the ball in $\mathds{R}^d$ centered at $\bm x$ with radius $r$, and by $V_r(\bm x)$ the cube in $\mathds{R}^d$ centered at $\bm x$ with side length $r$.

For any metric $\widetilde{D}$ and subset $U$, denote $\text{diam}(U; \widetilde{D})$ as the diameter of $U$ with respect to $\widetilde{D}$.

\section{General Sperner family and associated estimates}\label{sperner}

Before presenting our version of Sperner's theorem, we recall the definition of the classical Sperner family. 

\begin{definition}\label{def-sperner}
For $n\in \mathds{N}$, let $\mathcal{A}_n$ be a collection of subsets of $[1,n]_{\mathds{Z}}$. 
We say $\mathcal{A}_n$ is a \textit{Sperner family} 
if for each $A\in \mathcal{A}_n$ either of the following holds:
\begin{itemize}

\item[(1)] There exists $B_A\subset [1,n]_{\mathds{Z}} \backslash A$ with $|B_A|\geq \frac{1}{2} n$ such that for all $A' \supset A$ and $A'\cap B_A\neq \emptyset$, we have $A'\notin \mathcal{A}_n$;
\medskip

\item[(2)] There exists $B'_A\subset A$ with $|B'_A|\geq \frac{1}{2} n$ such that for all $A' \subset A$ and $ B'_A\backslash A'\neq \emptyset$, we have $A'\notin \mathcal{A}_n$.

\end{itemize}
\end{definition}


For $n\in \mathds{N}$, we now let $\xi_1,\cdots, \xi_n$ be a sequence of independent Bernoulli random variables with parameter $p$.
A realization of these random variables can be represented by the set $A\subset [1,n]_{\mathds{Z}}$, which records the indices where the random variables take on the value 1.
Based on this representation, we define $\mathcal{A}_n$ as a Sperner event if it forms a Sperner family.

\begin{proposition}\label{prop-sperner}
There exists a constant $C_1<\infty$ depending only on $p$ such that for any Sperner event $\mathcal{A}_n$, 
we have $\mathds{P}[\mathcal{A}_n]\leq \frac{C_1}{\sqrt{n}}$.
\end{proposition}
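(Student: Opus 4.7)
The plan is to reduce the statement to a chain LYM-type inequality. Write $M_k$ for the number of sets $A \in \mathcal{A}_n$ with $|A|=k$. The first observation is
\begin{equation*}
\mathds{P}[\mathcal{A}_n] = \sum_{k=0}^{n} M_k\, p^k (1-p)^{n-k}
= \sum_{k=0}^{n} \frac{M_k}{\binom{n}{k}}\binom{n}{k}p^k(1-p)^{n-k}
\leq \max_k\binom{n}{k}p^k(1-p)^{n-k} \cdot \sum_{k=0}^{n}\frac{M_k}{\binom{n}{k}}.
\end{equation*}
The maximum of the binomial p.m.f.\ is $O(1/\sqrt n)$ by Stirling, with constant depending only on $p$, so the task reduces to showing $\sum_k M_k/\binom{n}{k} \leq C_0$ for some absolute constant $C_0$.

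To obtain this combinatorial bound, I would generate a uniformly random maximal chain $\emptyset = C_0 \subset C_1 \subset \cdots \subset C_n = [1,n]_{\mathds{Z}}$ by taking a uniformly random permutation $\pi$ of $[1,n]_{\mathds{Z}}$ and setting $C_k=\{\pi(1),\ldots,\pi(k)\}$. Then $\sum_k M_k/\binom{n}{k}=\mathds{E}[X]$, where $X=|\{k:C_k\in\mathcal{A}_n\}|$. Partition $\mathcal{A}_n=\mathcal{A}_n^+\sqcup\mathcal{A}_n^-$ according to which of conditions (1), (2) each set satisfies (placing sets satisfying both into $\mathcal{A}_n^+$), and write $X=X^++X^-$ correspondingly.

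The central estimate will be $\mathds{E}[X^+]\leq 2$. Let $\tau_1^+<\tau_2^+<\cdots$ enumerate the indices $k$ with $C_k\in\mathcal{A}_n^+$, and set $\mathcal{G}_j=\sigma(\pi(1),\ldots,\pi(\tau_j^+))$. If $\tau_{j+1}^+$ exists, then $C_{\tau_{j+1}^+}\in\mathcal{A}_n$ is a strict superset of $C_{\tau_j^+}$, so condition (1) applied to $C_{\tau_j^+}$ with its witness set $B_{C_{\tau_j^+}}$ forces $C_{\tau_{j+1}^+}\cap B_{C_{\tau_j^+}}=\emptyset$; in particular $\pi(\tau_j^+ + 1)\notin B_{C_{\tau_j^+}}$. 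Conditionally on $\mathcal{G}_j$ the element $\pi(\tau_j^+ + 1)$ is uniform on $[1,n]_{\mathds{Z}}\setminus C_{\tau_j^+}$, and $|B_{C_{\tau_j^+}}|\geq n/2$, so this conditional probability is at most $1/2$. Iterating gives $\mathds{P}[X^+\geq j]\leq 2^{-(j-1)}$, whence $\mathds{E}[X^+]\leq 2$.

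For $\mathds{E}[X^-]\leq 2$ I would pass to complements: the family $\bar{\mathcal{A}}_n=\{[1,n]_{\mathds{Z}}\setminus A:A\in\mathcal{A}_n\}$ is again a Sperner family with the roles of (1) and (2) swapped (with $B'_A$ now serving as the witness $B$ for $[1,n]_{\mathds{Z}}\setminus A$), and the reversed chain $\bar C_k:=[1,n]_{\mathds{Z}}\setminus C_{n-k}$ is also a uniformly random maximal chain (it is generated by the reversed, hence still uniform, permutation). Under this dictionary $X^-$ for $(\mathcal{A}_n,C_\bullet)$ equals $X^+$ for $(\bar{\mathcal{A}}_n,\bar C_\bullet)$, so the previous paragraph applies. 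I expect the main subtlety to be exactly this asymmetry between (1) and (2) --- one constrains the chain forward along the chain while the other constrains it backward --- and the complementation trick is what allows a single argument to cover both cases.
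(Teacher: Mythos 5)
Your proposal is correct and follows essentially the same route as the paper: the identical reduction $\mathds{P}[\mathcal{A}_n]\leq \max_k\binom{n}{k}p^k(1-p)^{n-k}\cdot\sum_k a_k/\binom{n}{k}$ followed by an LYM-type bound of $4$ on the second factor. The only difference is that the paper imports the two bounds $\sum_k a_{k,\mathrm{up}}/\binom{n}{k}\leq 2$ and $\sum_k a_{k,\mathrm{down}}/\binom{n}{k}\leq 2$ from \cite[Theorem 4.2]{DS20}, whereas you reprove them self-containedly via a random maximal chain; your stopping-time/geometric-decay argument is precisely the probabilistic form of the Lubell-style permutation counting used there.
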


In order to prove Proposition \ref{prop-sperner}, we need a more extensive version of Sperner's theorem.
We note that a generalization of Sperner's theorem is presented in \cite[Theorem 4.2]{DS20}, which built upon a modification of Lubell's proof \cite{L66} for the Lubell-Yamamoto-Meshalkin inequality.
We will carry out some (obvious) generalization over \cite{DS20} to cover all $p\in (0, 1)$.
To this end, we assume that $\mathcal{A}_n$ is a Sperner family. 
In this context,  we say that $A\in \mathcal{A}_n$ is upward-unstable (resp.\ downward-unstable) if Definition \ref{def-sperner} (1) (resp.\ (2)) holds for $A$.
Let $\mathcal{A}_{n,\rm up}$ (resp.\ $\mathcal{A}_{n,\rm down}$) be the collection of all upward-unstable (resp.\ downward-unstable) sets in $\mathcal{A}_n$.

Let us denote
$$
a_k=|\{A\in \mathcal{A}_n:\ |A|=k\}|\quad \text{for all }0\leq k\leq n.
$$
We then have the following estimate for $\{a_k\}$.

\begin{lemma}[Generalization of Sperner's theorem]\label{spernerlem}
Let $\mathcal{A}_n$ be a Sperner family. 
Then we have $\sum_{k=0}^n \frac{a_k}{\tbinom{n}{k}}\leq 4$.
\end{lemma}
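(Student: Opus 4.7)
\emph{Proof plan.} I would extend Lubell's encoding of the classical LYM inequality. Let $\sigma$ be a uniformly random permutation of $[1,n]_{\mathds{Z}}$ and form the maximal chain $\emptyset=C_0\subset C_1\subset\cdots\subset C_n$ with $C_j=\{\sigma(1),\ldots,\sigma(j)\}$. Since $\mathds{P}[A=C_{|A|}]=1/\binom{n}{|A|}$, setting $N=|\mathcal{A}_n\cap\{C_0,\ldots,C_n\}|$ yields the identity $\sum_{k=0}^n a_k/\binom{n}{k}=\mathds{E}[N]$. As every $A\in\mathcal{A}_n$ is upward- or downward-unstable we have $N\leq N_{\mathrm{up}}+N_{\mathrm{down}}$, with the obvious meanings, and it suffices to prove $\mathds{E}[N_{\mathrm{up}}]\leq 2$; the bound $\mathds{E}[N_{\mathrm{down}}]\leq 2$ follows symmetrically, by applying the same argument to the complemented family $\{[1,n]_{\mathds{Z}}\setminus A:A\in\mathcal{A}_n\}$ (which is again Sperner, with Clauses (1) and (2) of Definition \ref{def-sperner} exchanged) on the reversed chain.

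The key observation for $\mathds{E}[N_{\mathrm{up}}]$ is a survival-time bound. On the event $\{N_{\mathrm{up}}\geq 1\}$, let $A_1=C_{X_1}$ be the first upward-unstable element of $\mathcal{A}_n$ on the chain and let $T_1>X_1$ be the first index at which $\sigma(T_1)\in B_{A_1}$ (such $T_1$ exists because $|B_{A_1}|\geq n/2$). Clause (1) of Definition \ref{def-sperner} applied to $A_1$ immediately forces $C_j\notin\mathcal{A}_n$ for every $j\geq T_1$: indeed $C_j\supsetneq A_1$ and $\sigma(T_1)\in C_j\cap B_{A_1}$. Hence every upward-unstable element of $\mathcal{A}_n$ on the chain lies at some position in $\{X_1,X_1+1,\ldots,T_1-1\}$, and in particular $N_{\mathrm{up}}\leq T_1-X_1$.

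It remains to bound $\mathds{E}[T_1-X_1\mid A_1]$. Because the event $\{A_1=A\}$ is determined by $(\sigma(1),\ldots,\sigma(|A|))$, conditional on $A_1=A$ the suffix $(\sigma(|A|+1),\ldots,\sigma(n))$ is a uniform ordering of $[1,n]_{\mathds{Z}}\setminus A$ and $B_A$ is deterministic. Since $|B_A|\geq n/2\geq (n-|A|)/2$, a one-line sampling-without-replacement estimate gives $\mathds{P}[T_1-X_1>j\mid A_1]\leq 2^{-j}$, so $\mathds{E}[T_1-X_1\mid A_1]\leq 2$ and consequently $\mathds{E}[N_{\mathrm{up}}]\leq 2\,\mathds{P}[N_{\mathrm{up}}\geq 1]\leq 2$. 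The main conceptual step is recognising that the ``forbidden region'' past $T_1$ turns the count of upward-unstable sets into a geometric-type first-hit problem with ratio at most $1/2$; the only remaining subtlety is the measurability of the stopping rule selecting $A_1$, which is routine.
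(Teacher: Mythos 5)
Your argument is correct and is essentially the same as the paper's: the paper simply quotes the bounds $\sum_k a_{k,\mathrm{up}}/\binom{n}{k}\leq 2$ and $\sum_k a_{k,\mathrm{down}}/\binom{n}{k}\leq 2$ from the generalized Sperner theorem of Ding--Sly, whose proof is exactly Lubell's random-maximal-chain encoding with the $|B_A|\geq n/2$ hypothesis forcing the geometric ($\leq 2^{-j}$) decay of the number of unstable sets surviving on the chain past the first one. Your only cosmetic deviations are phrasing the count probabilistically as a survival time $T_1-X_1$ rather than by direct permutation counting, and handling the downward-unstable case by complementation instead of running the suffix argument directly; both are sound.
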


\begin{proof}
	For all $0\leq k\leq n$, denote
	\begin{equation*}
		a_{k,\mathrm{up}}=|\{A\in \mathcal{A}_{n,\mathrm{up}}:\ |A|=k\}|,\quad a_{k,\mathrm{down}}=|\{A\in \mathcal{A}_{n,\mathrm{down}}:\ |A|=k\}|.
	\end{equation*}
	From \cite[Proof of Theorem 4.2]{DS20}, we have
	\begin{equation*}
		\sum_{k=0}^n \frac{a_{k,\mathrm{up}}}{\tbinom{n}{k}}\leq 2\quad \text{and}\quad
		\sum_{k=0}^n \frac{a_{k,\mathrm{down}}}{\tbinom{n}{k}}\leq 2.
	\end{equation*}
	Summing up the two inequality yields the lemma.
\end{proof}

With this lemma at hand, we can present the

\begin{proof}[Proof of Proposition \ref{prop-sperner}]
Let $m\in [1,n]_{\mathds{Z}}$ be the number such that
$$
\tbinom{n}{m} p^m(1-p)^{n-m}=\max_{k}\tbinom{n}{k}p^k(1-p)^{n-k}.
$$
Indeed, it is clear that $m\in \{\lfloor pn\rfloor, \lceil pn\rceil\}$. Combining this with Lemma \ref{spernerlem}, we get that
\begin{equation*}
\begin{aligned}
\mathds{P}[\mathcal{A}_n]&=\sum_{k=1}^n a_k p^k (1-p)^{n-k}
=\tbinom{n}{m} p^m(1-p)^{n-m}\sum_{k=1}^n\frac{a_k p^k (1-p)^{n-k}}{\tbinom{n}{m} p^m(1-p)^{n-m}}\\
&\leq \tbinom{n}{m} p^m(1-p)^{n-m}\sum_{k=1}^n\frac{a_k p^k (1-p)^{n-k}}{\tbinom{n}{k} p^k(1-p)^{n-k}}\leq \frac{\widetilde{C}}{\sqrt{n}}
\end{aligned}
\end{equation*}
for some constant $\widetilde{C}<\infty$ depending only on $p$.
\end{proof}

\section{Proof of Theorem \ref{thm-continuity}}\label{sect-T1}

The objective of this section is to establish Theorem \ref{thm-continuity}. Since the proof of uniqueness in \cite{DFH23+} employed the framework for the Liouville quantum gravity (LQG) metric \cite{GM21, DG23}, we offer a remark here for comparison: the continuity of the distribution for the LQG metric can be proved easily since one can perturb the Gaussian free field by adding a smooth function without drastically changing its probability density. So roughly speaking, it is not surprising and easy to derive a continuous distribution on the distance when the disorder has a continuous distribution to begin with. However, in the case of LRP, on the contrary, the disorder is given by a Poisson point process which by nature is ``discrete''; this is why in our case the continuity result is not at all obvious and requires significant work (including employing the Sperner's theorem). Before carrying out our proof, we introduce some notations which will be used repeatedly.
Recall that $D(\cdot,\cdot; U)$ is the LRP metric restricted to $U$ for all $U\subset \mathds{R}^d$.
For any $r>s>0$, denote $\mathbb{A}_{r,s}= B_r({\bm 0})\setminus B_s({\bm 0})$.
From the tightness of the LRP metric (see \cite[Theorem 1.4]{DFH23+}), it is clear that there exist constants $c_{*,1}, c_{*,2}>0$ (both depending only on $d$ and $\beta$) such that
\begin{equation}\label{di}
\begin{aligned}
&\mathds{P}\left[D(B_{1/2}({\bm 0}), \mathbb{A}_{1,7/8};B_1({\bm 0}))\geq c_{*,1}\right]\\
&\geq \mathds{P}\left[B_{1/2}({\bm 0})\nsim B_{7/8}({\bm 0})^c\right]\mathds{P}\left[D(B_{1/2}({\bm 0}), B_{7/8}({\bm 0})^c)\geq c_{*,1}\ | B_{1/2}({\bm 0})\nsim B_{7/8}({\bm 0})^c\right]\geq c_{*,2}.
\end{aligned}
\end{equation}

In the rest of this section, we fix sufficiently small $\varepsilon \in (0,1)$.
Recall that $\theta\in (0,1)$ is the exponent defined in \eqref{def-theta}.
Denote
\begin{equation}\label{def-KN}
K=\sqrt{\log(1/ \varepsilon)} \quad \text{and}\quad N=\left( 4^{-(1+1/\theta)}(c_{*,1}/\varepsilon)^{1/\theta}\right)^{1/K}.
\end{equation}
In addition, let $\{M_i\}_{i\in [1,K]_{\mathds{Z}}}$ denote a sequence satisfying
\begin{equation}\label{def-M}
M_1=(4\varepsilon/c_{*,1})^{1/\theta}\quad \text{and}\quad M_i=NM_{i-1}\quad \text{for }2\leq i\leq K.
\end{equation}
By some simple calculations and the fact that $\theta\in (0,1)$ (depending only on $d$ and $\beta$), one has that for sufficiently small $\varepsilon \in (0,1)$ (depending only on $d$ and $\beta$),
\begin{equation}\label{condit-Mi}
N\geq 2\quad \text{and}\quad \sum_{i=1}^KM_i\leq 1.
\end{equation}
Define $r_0=0$ and $r_i=\sum_{k=1}^i M_k$ for $1\leq i\leq K$. We also let $\delta>0$ denote a constant satisfying
\begin{equation}\label{delta0}
4\delta^{\theta/2}\leq c_{*,1}.
\end{equation}


\subsection{Sparsity of long edges}
We start by defining the following events on the occurrence of certain long edges (see Figure \ref{fig-AE} for an illustration).
\begin{definition}\label{def-A}
For each $i\in [1,K]_{\mathds{Z}}$,
\begin{itemize}
\item[(1)] let $A_i$ denote the event that there exists at least one long edge connecting $B_{r_{i-1}}({\bm 0})$ and $B_{r_{i-1}+ M_i/8}({\bm 0})^c$;

\item[(2)] let $E_{i,1}$ denote the event that there exists at least one long edge connecting $B_{r_{i}-M_i/8}({\bm 0})$ and $B_{r_i}({\bm 0})^c$. Here we use a subscript 1 in $E_{i, 1}$ since later we will introduce some other events with subscripts 2, 3 and 4.
\end{itemize}
\end{definition}

\begin{figure}[htbp]
\centering
\includegraphics[scale=0.7]{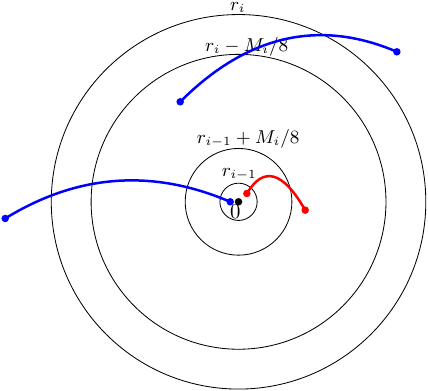}
\caption{The illustration for the events $A_i$ and $E_{i,1}$. The event $A_i$ indicates the presence of red long edges in the picture, while the event $E_{i,1}$ indicates the presence of blue edges in the picture.}
\label{fig-AE}
\end{figure}

Denote $A=\cup_{i=1}^KA_i$. Then we have the following estimate for $\mathds{P}[A]$.

\begin{lemma}\label{lem-probA}
For all $d\geq 1$ and all $\beta>0$, there exists a constant $c_1=c_1(d,\beta)>0$ {\rm(}depending only on $d$ and $\beta${\rm)} such that $\mathds{P}[A_i]\leq c_1N^{-d}$ for all $i\in [1,K]_{\mathds{Z}}$.
Therefore, we have $\mathds{P}[A^c]\geq 1-c_1KN^{-d}$.
\end{lemma}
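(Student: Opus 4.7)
The natural approach is a first-moment bound. Since $A_i$ requires the existence of at least one long edge connecting $B_{r_{i-1}}(\bm 0)$ and $B_{r_{i-1}+M_i/8}(\bm 0)^c$, Markov's inequality yields
\[
\mathds{P}[A_i] \;\leq\; \mathds{E}\big[\,\#\{\text{long edges }\langle\bm k,\bm l\rangle : \bm k \in B_{r_{i-1}}(\bm 0) \cap \mathds{Z}^d,\ \bm l \in B_{r_{i-1}+M_i/8}(\bm 0)^c \cap \mathds{Z}^d\}\,\big].
\]
Using the LRP edge-probability formula and the elementary inequality $1 - e^{-x} \leq x$, each potential long edge appears with probability at most $\beta\int_{V_1(\bm k)} \int_{V_1(\bm l)}|\bm u-\bm v|^{-2} \, d\bm u\, d\bm v$. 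Summing over lattice pairs and replacing the lattice sum by the corresponding integral (with harmless $O(1)$ boundary adjustments absorbed into constants) gives
\[
\mathds{P}[A_i] \;\lesssim\; \int_{B_{r_{i-1}}(\bm 0)} \int_{B_{r_{i-1}+M_i/16}(\bm 0)^c} \frac{d\bm u\,d\bm v}{|\bm u-\bm v|^{\alpha}},
\]
where $\alpha$ is the relevant critical-LRP exponent (so that the kernel is scale-invariant at criticality in $d$ dimensions).

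The next step is to estimate this integral using geometric separation: for $\bm u$ in the inner ball and $\bm v$ in the outer complement, $|\bm u-\bm v| \gtrsim M_i$. Computing the $\bm v$-integral in polar coordinates yields a scale-invariant decay factor of order $M_i^{-d}$ (this is the polynomial tail guaranteed by the critical scaling of the LRP kernel in $\mathds{R}^d$), while the $\bm u$-integral is bounded by the volume $\lesssim r_{i-1}^d$ of the inner ball. Hence $\mathds{P}[A_i] \lesssim (r_{i-1}/M_i)^d$.

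Finally, substituting the geometric expressions from \eqref{def-M},
\[
\frac{r_{i-1}}{M_i} \;=\; \frac{M_1(N^{i-1}-1)/(N-1)}{M_1\, N^{i-1}} \;\leq\; \frac{1}{N-1} \;\leq\; \frac{2}{N},
\]
where the last step uses $N \geq 2$ from \eqref{condit-Mi}. Therefore $\mathds{P}[A_i] \lesssim N^{-d}$, which is the first assertion with some constant $c_1 = c_1(d,\beta)$. The bound $\mathds{P}[A^c] \geq 1 - c_1 K N^{-d}$ then follows from the union bound $\mathds{P}[A] \leq \sum_{i=1}^K \mathds{P}[A_i] \leq c_1 K N^{-d}$.

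The main obstacle is ensuring the polynomial decay $\int_{|\bm v|\geq R} (\text{kernel}) \,d\bm v \lesssim R^{-d}$ in the $\bm v$-integral; this requires invoking the correct critical normalization of the LRP kernel in dimension $d$, a standard ingredient in the scaling theory of the model. A secondary point of care is the lattice-to-integral comparison, where one must check that the $O(1)$ boundary of the balls (and, for very small $r_{i-1}$, the possibility that $B_{r_{i-1}}(\bm 0) \cap \mathds{Z}^d$ contains only $\bm 0$) contributes only harmlessly to the constants. With these routine issues dispatched, the proof is a direct first-moment computation.
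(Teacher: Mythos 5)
Your proposal is correct and follows essentially the same route as the paper: bound $\mathds{P}[A_i]$ by the double integral of the critical kernel $\beta|\bm u-\bm v|^{-2d}$ over $B_{r_{i-1}}(\bm 0)\times B_{r_{i-1}+M_i/8}(\bm 0)^c$, evaluate the outer integral in polar coordinates to get a factor of order $M_i^{-d}$ against the inner volume of order $r_{i-1}^d$, and finish with $r_{i-1}/M_i\le 1/(N-1)\le 2/N$ and a union bound over $i\in[1,K]_{\mathds{Z}}$. The one clarification: the events $A_i$ concern the continuum Poisson process of long edges underlying the limiting metric (note $r_{i-1}+M_i/8\le 1$, so a literal lattice reading would be vacuous or would drag in nearest-neighbour edges of probability one), hence there is no lattice-to-integral comparison to make --- one has exactly $\mathds{P}[A_i]=1-\exp\{-\Lambda_i\}\le \Lambda_i$ with $\Lambda_i$ the double integral above, which is precisely what your computation then estimates.
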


\begin{proof}
For each $i\in [1,K]_{\mathds{Z}}$, by the definition of $A_i$ in Definition \ref{def-A} (1) and \eqref{condit-Mi}, we get that
\begin{align*}
\mathds{P}[A_i]&=1-\exp\left\{-\int_{B_{r_{i-1}}({\bm 0})}\int_{B_{r_{i-1}+ M_i/8}({\bm 0})^c}\frac{\beta}{|{\bm u}-{\bm v}|^{2d}}\d {\bm u}\d {\bm v}\right\}\\
&\leq 1-\exp\left\{-\int_{B_{r_{i-1}}({\bm 0})}\int_{B_{ M_i/8}({\bm u})^c}\frac{\beta}{|{\bm u}-{\bm v}|^{2d}}\d {\bm u}\d {\bm v}\right\}\\
&=1-\exp\left\{-\beta \widetilde{c}_1(d)r_{i-1}^d \int_{M_i/8}^{+\infty}\frac{1}{s^{d+1}}\d s\right\}\\
&\leq \beta \widetilde{c}_2(d)\left(\frac{r_{i-1}}{ M_i}\right)^d\leq \widetilde{c}_3(d,\beta) N^{-d}
\end{align*}
for some constants $\widetilde{c}_1(d),\ \widetilde{c}_2(d)>0$ (depending only on $d$) and $\widetilde{c}_3(d,\beta)>0$ (depending only on $d$ and $\beta$).
This implies that
$$
\mathds{P}[A^c]\geq 1-\sum_{i=1}^K\mathds{P}[A_i]\geq 1-\widetilde{c}_3(d,\beta)KN^{-d}.
$$
Hence, the proof is complete.
\end{proof}

In the following, we aim to provide a large deviation estimate for the number of occurrences of event $E_{i,1}$, $i\in [1,K]_{\mathds{Z}}$. To do this, we define a sequence of Bernoulli random variables $\{\xi_i\}_{1\leq i\leq K}$ as
\begin{equation*}
\xi_i=
\begin{cases}
1,\quad & E_{i,1}^c\ \text{occurs},\\
0,\quad & \text{otherwise}.
\end{cases}
\end{equation*}

\begin{proposition}\label{numb-Di}
For all $d\geq 1$ and all $\beta>0$, there exists a constant $\kappa=\kappa(d,\beta)\in (0,1)$ {\rm(}depending only on $d$ and $\beta${\rm)} such that
$$
\mathds{P}\left[\sum_{i=1}^K \xi_i\leq (1-\kappa)K/2\right]\leq \exp\left\{-(1-\kappa)^2K/2\right\}.
$$
We will refer to the event $\left\{\sum_{i=1}^K \xi_i\leq (1-\kappa)K/2\right\}$ as $E_1$.
\end{proposition}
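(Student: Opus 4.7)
The plan is to prove a Hoeffding-type concentration bound for $\sum_{i=1}^{K}\xi_i$, despite the fact that the sets of long edges witnessing different $E_{i,1}$'s overlap and so the $\xi_i$'s are correlated. My approach is to isolate the coupling edges into an event of exponentially small probability, on whose complement the $\xi_i$'s become independent, and then to apply Hoeffding's inequality.

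First I would establish a uniform lower bound $\mathds{P}[\xi_i=1]\geq q$ for some $q=q(d,\beta)\in(0,1)$. Let $\mathcal{E}_i$ denote the set of long edges $\langle\bm u,\bm v\rangle$ with $\min(|\bm u|,|\bm v|)\leq r_i-M_i/8$ and $\max(|\bm u|,|\bm v|)>r_i$, so that, exactly as in the computation of $\mathds{P}[A_i]$ in Lemma \ref{lem-probA},
\[
\mathds{P}[\xi_i=1]=\exp\!\left\{-\int_{\mathcal{E}_i}\frac{\beta}{|\bm u-\bm v|^{2d}}\,\d\bm u\,\d\bm v\right\}.
\]
The recursion $r_i=r_{i-1}+M_i$ together with $M_i=NM_{i-1}$ and $N\geq 2$ (from \eqref{condit-Mi}) gives $r_i/M_i\leq N/(N-1)\leq 2$, and repeating the estimate in the proof of Lemma \ref{lem-probA} using the separation $|\bm u-\bm v|\geq M_i/8$ bounds this integral by a constant $C'(d,\beta)$ uniformly in $i$. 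Setting $q:=e^{-C'}$ yields the claim.

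To handle the correlation, set $\mathcal{O}:=\bigcup_{i<j}(\mathcal{E}_i\cap\mathcal{E}_j)$ (the set of coupling edges), and let $\Omega_0$ be the event that no long edge lies in $\mathcal{O}$. For $i<j$, $\mathcal{E}_i\cap\mathcal{E}_j$ consists of long edges with inner endpoint in $B_{r_i-M_i/8}$ and outer endpoint in $B_{r_j}^c$; since $r_j-r_i\geq M_j=M_iN^{j-i}$, the same Lemma \ref{lem-probA}-type estimate gives
\[
\int_{\mathcal{E}_i\cap\mathcal{E}_j}\frac{\beta}{|\bm u-\bm v|^{2d}}\,\d\bm u\,\d\bm v\leq C''(d,\beta)\,N^{-(j-i)d},
\]
and summing over $i<j$ yields $\mathds{P}[\Omega_0^c]\leq C'''(d,\beta)\,KN^{-d}$. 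The residual sets $\mathcal{E}_i\setminus\mathcal{O}$ are pairwise disjoint, so by the independence of distinct long edges the indicators $\xi_i':=\mathds{1}\{\text{no long edge lies in }\mathcal{E}_i\setminus\mathcal{O}\}$ are independent Bernoulli with $\mathds{E}[\xi_i']\geq\mathds{P}[\xi_i=1]\geq q$, and $\xi_i=\xi_i'$ pointwise on $\Omega_0$.

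Combining the two ingredients,
\[
\mathds{P}\!\left[\textstyle\sum_{i=1}^K\xi_i\leq(1-\kappa)K/2\right]\leq\mathds{P}[\Omega_0^c]+\mathds{P}\!\left[\textstyle\sum_{i=1}^K\xi_i'\leq(1-\kappa)K/2\right].
\]
I would set $\kappa:=1-q$, so that $(1-\kappa)/2=q/2$ lies strictly below $\mathds{E}[\xi_i']\geq q$; Hoeffding's inequality applied to the independent $\xi_i'$'s then gives the second term $\leq\exp(-2(q/2)^2K)=\exp(-(1-\kappa)^2K/2)$. For the first term, \eqref{def-KN} forces $\log N=\Theta(K)$ as $\varepsilon\to 0$, so $KN^{-d}$ decays exponentially in $K$ at a rate strictly greater than $(1-\kappa)^2/2$, and can thus be absorbed by shrinking $\kappa$ by an amount depending only on $d$ and $\beta$. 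The main obstacle is the decoupling step; once $\mathcal{E}_i\cap\mathcal{E}_j$ is shown to have intensity decaying geometrically in $j-i$, the concentration reduces to standard Bernoulli large deviations.
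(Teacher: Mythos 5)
Your proof is correct, but it takes a genuinely different route from the paper's. The paper does not decouple the Poisson process: instead it proves (Lemma \ref{prob-bad}) that $\mathds{P}[\xi_{i_1}=0,\dots,\xi_{i_k}=0]\leq\kappa^k$ for \emph{every} subset $\{i_1,\dots,i_k\}$, by comparing the way a single long edge can simultaneously witness several of the events $E_{i,1}$ to a Firework (rumour-spreading) process via Lemmas \ref{tail-Ll}--\ref{lem-tail-Mk}, and then invokes a Chernoff-type bound for dependent Bernoulli variables satisfying this subset-product condition (\cite{RV10}). Your alternative --- placing all edges of $\bigcup_{i<j}(\mathcal{E}_i\cap\mathcal{E}_j)$ into a bad event $\Omega_0^c$ of probability $O(KN^{-d})$, observing that the residual edge sets $\mathcal{E}_i\setminus\mathcal{O}$ are disjoint so the $\xi_i'$ are independent, and applying Hoeffding --- is more elementary, and your two intensity estimates (the uniform lower bound $\mathds{P}[\xi_i=1]\geq q(d,\beta)$ using $r_i\leq 2M_i$ and separation $M_i/8$, and the bound $N^{-(j-i)d}$ for $\mathcal{E}_i\cap\mathcal{E}_j$ using separation $r_j-r_i\geq M_iN^{j-i}$) are both valid. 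The trade-off is that your argument leans on the specific choice \eqref{def-KN}, under which $\log N=\Theta(K)$, so that the decoupling cost $KN^{-d}$ is itself exponentially small in $K$ at rate $d/\theta>1/2>(1-\kappa)^2/2$ and can be absorbed; the paper's Firework argument needs only $N\geq 2$ and yields the stated bound for every $K$ with no absorption step, so it is more robust to the parameter choices. Two small points to correct: to absorb the $O(KN^{-d})$ term you must \emph{enlarge} $\kappa$ (making $1-\kappa$ smaller weakens both the threshold and the exponent), not shrink it; and the absorption only works once $K$ exceeds a threshold depending on $d$ and $\beta$, which is harmless because the section fixes $\varepsilon$ sufficiently small, but this should be stated explicitly.
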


To prove Proposition \ref{numb-Di}, we need to do some preparations.

\begin{lemma}\label{prob-bad}
For all $d\geq 1$ and all $\beta>0$, there exists a constant $\kappa\in (0,1)$ {\rm(}depending only on $d$ and $\beta${\rm)} such that the following holds. For any $k\in [1,K]_{\mathds{Z}}$ and any $S=\{i_1,\cdots, i_k\}\subset [1,K]_{\mathds{Z}}$ in the ascending order, let
$$
w_k=\mathds{P}[\xi_{i_1}=0,\cdots, \xi_{i_k}=0].
$$
Then we have $w_k\leq \kappa^k$.
\end{lemma}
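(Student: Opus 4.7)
I would realise the long edges as the (unordered) atoms of a Poisson point process $\mathcal{P}$ on $\mathds{R}^d\times\mathds{R}^d$ with intensity $\beta|\bm{u}-\bm{v}|^{-2d}\,\mathrm{d}\bm{u}\,\mathrm{d}\bm{v}$, so that $E_{l,1}=\{\mathcal{P}(R_l)\ge 1\}$ for $R_l:=B_{r_l-M_l/8}(\bm 0)\times B_{r_l}(\bm 0)^c$. The whole argument hinges on two intensity estimates. First, by the same computation used for Lemma~\ref{lem-probA}, $\mathcal{I}(R_l):=\int_{R_l}\beta|\bm u-\bm v|^{-2d}\,\mathrm d\bm u\,\mathrm d\bm v\le C_0(d,\beta)$, so each $\mathds{P}[E_{l,1}]\le\kappa_0:=1-e^{-C_0}<1$. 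Second, for $l<l'$ the intersection $R_l\cap R_{l'}=B_{r_l-M_l/8}\times B_{r_{l'}}^c$ describes edges spanning a gap of size $r_{l'}-r_l\gtrsim M_{l'}$; combined with $r_l\lesssim M_l$ (from \eqref{condit-Mi} and $M_l=M_1N^{l-1}$) and the standard tail bound $\int_{|\bm w|>s}|\bm w|^{-2d}\,\mathrm d\bm w\asymp s^{-d}$, one obtains the off-scale decay $\mathcal{I}(R_l\cap R_{l'})\le C_1N^{-\alpha(l'-l)}$ for some $\alpha=\alpha(d)\ge 1$ and $C_1=C_1(d,\beta)$.

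\textbf{Peeling the outermost scale.} Order $S=\{s_1<\cdots<s_k\}$ and set $P(T):=\mathds{P}[\bigcap_{l\in T}E_{l,1}]$. The nesting $B_{r_{s_j}-M_{s_j}/8}\subset B_{r_{s_{k-1}}-M_{s_{k-1}}/8}$ for $j<k$ collapses $\bigcup_{j<k}(R_{s_k}\cap R_{s_j})$ into a single ball product, giving
\[
R_{s_k}=R_{s_k}^{\rm new}\sqcup R_{s_k}^{\rm int},\qquad R_{s_k}^{\rm int}=B_{r_{s_{k-1}}-M_{s_{k-1}}/8}\times B_{r_{s_k}}^c,\qquad R_{s_k}^{\rm new}=R_{s_k}\setminus R_{s_k}^{\rm int}.
\]
Two structural facts come for free: (i) $R_{s_k}^{\rm new}$ is disjoint from every $R_{s_j}$ for $j<k$, so $E(R_{s_k}^{\rm new}):=\{\mathcal{P}(R_{s_k}^{\rm new})\ge 1\}$ is independent of $\bigcap_{j<k}E_{s_j,1}$ by the independence of $\mathcal{P}$ on disjoint sets; (ii) $R_{s_k}^{\rm int}\subset R_{s_{k-1}}$, so $E(R_{s_k}^{\rm int})\subset E_{s_{k-1},1}$ and this event can be absorbed into the existing intersection. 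Writing $E_{s_k,1}=E(R_{s_k}^{\rm new})\cup E(R_{s_k}^{\rm int})$ and applying the union bound thus yields
\[
P(S)\le P(S\setminus\{s_k\})\,\mathds{P}\bigl[E(R_{s_k}^{\rm new})\bigr]+\mathds{P}\Bigl[\bigcap_{l\in S\setminus\{s_k,s_{k-1}\}}E_{l,1}\,\cap\,E(R_{s_k}^{\rm int})\Bigr].
\]

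\textbf{Iteration and closing the induction.} I would then recurse on the residual term, peeling $R_{s_k}^{\rm int}$ against $R_{s_{k-2}}$, then the next residual against $R_{s_{k-3}}$, and so on; at peel~$j$ the newly separated piece has $\bm{u}\in B_{r_{s_{k-j}}-M_{s_{k-j}}/8}\setminus B_{r_{s_{k-j-1}}-M_{s_{k-j-1}}/8}$ and $\bm{v}\in B_{r_{s_k}}^c$, and the off-scale estimate bounds its Poisson intensity by $C_1N^{-\alpha(s_k-s_{k-j})}\le C_1N^{-\alpha j}$ because $s_k-s_{k-j}\ge j$. Unrolling the recursion produces the master inequality
\[
P(S)\le\sum_{j=0}^{k-1}P(\{s_1,\ldots,s_{k-j-1}\})\,q_j,\qquad q_0\le\kappa_0,\ q_j\le C_1N^{-\alpha j}\ \text{for }j\ge 1.
\]
Inducting with the ansatz $P(\{s_1,\ldots,s_i\})\le\kappa^i$ reduces the lemma to verifying $\kappa_0+C_1(N^\alpha\kappa-1)^{-1}\le\kappa$. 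Picking $\kappa:=(1+\kappa_0)/2\in(\kappa_0,1)$, which depends only on $d$ and $\beta$, and using that $N\to+\infty$ as $\varepsilon\to 0^+$ by \eqref{def-KN}, this inequality is valid for all $\varepsilon$ small enough, and the induction closes.

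\textbf{Anticipated main obstacle.} The most delicate ingredient is the off-scale intensity decay $\mathcal{I}(R_l\cap R_{l'})\le C_1N^{-\alpha(l'-l)}$ uniformly in the dimension: for $d\ge 2$ one reduces to $\int_{B_R}(R'-|\bm u|)^{-d}\,\mathrm d\bm u\lesssim R^{d-1}(R'-R)^{-(d-1)}$, producing $\alpha=d-1$; for $d=1$ a direct one-dimensional calculation of $\int_{-R}^R\bigl((R'-u)^{-1}+(R'+u)^{-1}\bigr)\,\mathrm du\lesssim R/(R'-R)$ recovers $\alpha=1$. Once this geometric decay is in hand, the remainder of the proof is a purely structural Poisson manipulation.
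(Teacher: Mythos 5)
Your proof is correct, but it closes the argument by a genuinely different route than the paper. The shared backbone is the same: both proofs decompose the inner ball $B_{r_{i_l}-M_{i_l}/8}(\bm 0)$ into the disjoint annuli between consecutive indices of $S$, use that the Poisson edge process restricted to disjoint regions is independent, and exploit the off-scale decay of the intensity of $(\text{annulus at level } l)\times B_{r_{i_{l'}}}(\bm 0)^c$ in $l'-l$. The paper then packages the ``reach'' of each annulus into a random variable $L_l$, observes that $\cap_j E_{i_j,1}$ forces the associated Firework (rumour-spreading) process to cover all $k$ levels, and invokes \cite[Proposition 1]{GGJR14} as a black box to get the exponential bound; this works for any $N\geq 2$. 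You instead run an explicit peeling recursion — a union bound over which annulus contains the inner endpoint of the edge witnessing the outermost event — and close by strong induction on $|S|$ via the master inequality $P(S)\le\sum_j P(\{s_1,\dots,s_{k-j-1}\})q_j$ with $q_0\le\kappa_0$ and $q_j\lesssim N^{-\alpha j}$. This is more elementary and self-contained, at the price of needing $N^{\alpha}\kappa>1+2C_1/(1-\kappa_0)$, i.e.\ $\varepsilon$ small depending only on $d$ and $\beta$; that is harmless here, since the section already fixes $\varepsilon$ sufficiently small depending on $d,\beta$ and Theorem \ref{thm-continuity} concerns $\varepsilon\to 0^+$, and since $\log N\asymp\sqrt{\log(1/\varepsilon)}\to\infty$ by \eqref{def-KN}. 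Two small remarks: your worry about the sharpness of the off-scale estimate is a non-issue — the crude bound $\int_{B_R}(R'-|\bm u|)^{-d}\,\d\bm u\le |B_R|(R'-R)^{-d}$ already gives $\alpha=d\ge 1$ in every dimension, exactly as in the computation for Lemma \ref{lem-probA}; and the disjointness claims (i)--(ii) should be checked on unordered pairs, but they do hold because in every relevant pairing one coordinate must lie in a ball strictly inside the complement constraint of the other region.
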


The proof of Lemma \ref{prob-bad} is inspired by \cite[Section 2]{DFH25}, which employs the Firework process and view coverage of long edges over annuli as the propagation in the Firework process.
To this end, we fix  $k\in [1,K]_{\mathds{Z}}$ and $S=\{i_1,\cdots, i_k\}\subset [1,K]_{\mathds{Z}}$ being a set in the ascending order.
For $l\in [0,k-1]_{\mathds{Z}}$, let $L_l$ represent the maximum number of $s\geq 1$ such that there exists at least one long edge within
$$
\mathcal{E}_{\mathbb{A}_{r_{i_{l+1}}- M_{i_{l+1}}/8,r_{i_l}-M_{i_l}/8}\times B_{r_{i_{l+s}}}({\bm 0})^c}\neq \emptyset,
$$
where $i_0= r_{i_0}=M_{i_0}:=0$ and $\mathbb{A}_{r_{i_{1}}-M_{i_{1}}/8,r_{i_0}- M_{i_0}/8}:=B_{7M_{i_1}/8}({\bm 0})$ (see Figure \ref{fig-L} for an illustration). It is worth emphasizing that $L_1,\cdots, L_{k-1}$ are independent from the independence of edges in the LRP model.

\begin{figure}[htbp]
\centering
\includegraphics[scale=0.4]{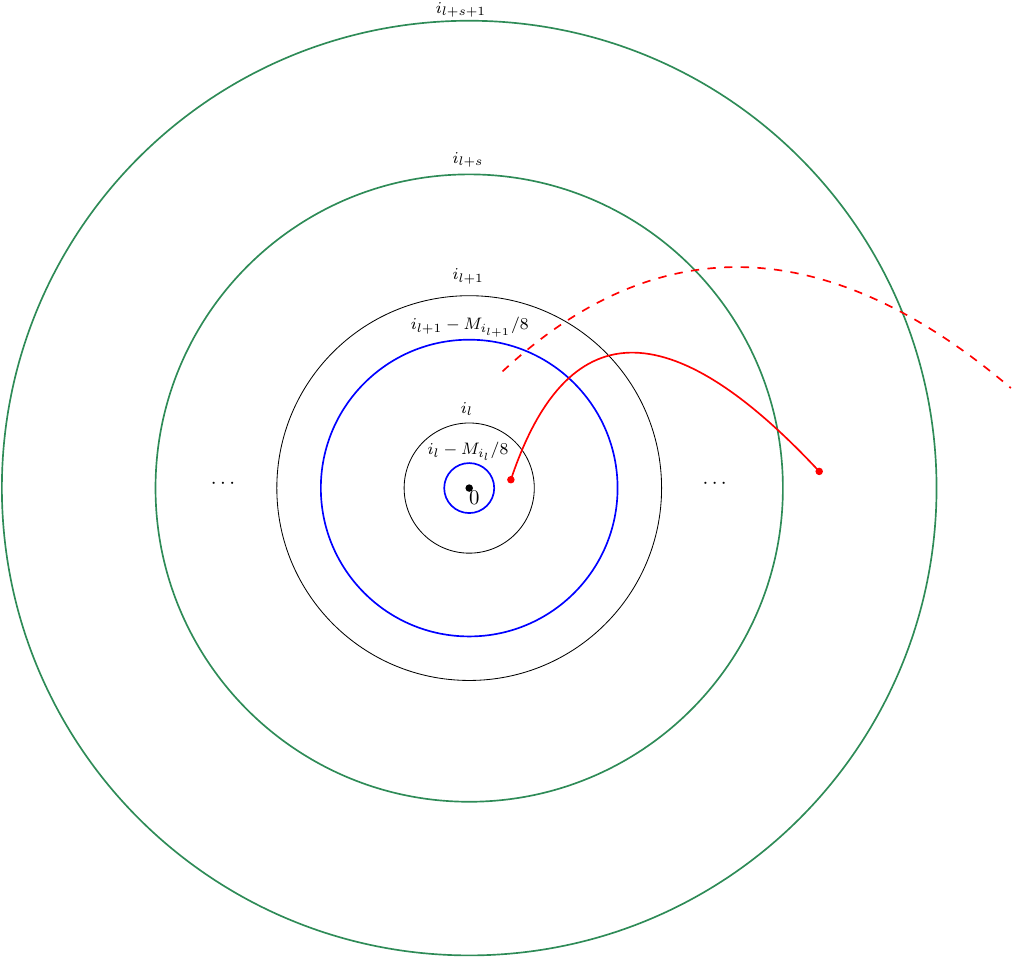}
\caption{The illustration for the definition of $L_{l}$. The event $\{L_l=s\}$ is equivalent to the event that there exists at least one long edge in $\mathcal{E}_{\mathbb{A}_{r_{i_{l+1}}- M_{i_{l+1}}/8,r_{i_l}-M_{i_l}/8}\times B_{r_{i_{l+s}}}({\bm 0})^c}$ (the red curve), while no long edge exists in $\mathcal{E}_{\mathbb{A}_{r_{i_{l+1}}- M_{i_{l+1}}/8,r_{i_l}-M_{i_l}/8}\times B_{r_{i_{l+s+1}}}({\bm 0})^c}$ (the red dashed curve).}
\label{fig-L}
\end{figure}

For the distribution of $L_l$, we have the following property.

\begin{lemma}\label{tail-Ll}
For all $d\geq 1$ and all $\beta>0$, there exists a constant $c_2=c_2(d,\beta)>0$ {\rm(}depending only on $d$ and $\beta${\rm)} such that for each $k\in [1,K]_{\mathds{Z}}$ and each $S=\{i_1,\cdots, i_k\}\subset [1,K]_{\mathds{Z}}$ in the ascending order, we have
$$
\mathds{P}[L_l\geq s]\leq 1-\exp\left\{-\frac{c_2}{1/8+2^{s-1}}\right\}
$$
for all $l\in [0,k-1]_{\mathds{Z}}$ and $s\in [1,k-l]_{\mathds{Z}}$.
\end{lemma}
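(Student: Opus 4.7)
The plan is to reduce the lemma to an integral estimate via the Poisson-like product structure of long-edge occurrences in the $\beta$-LRP model. By the definition of $L_l$, the event $\{L_l \geq s\}$ coincides with the event that at least one long edge lies between $U = \mathbb{A}_{r_{i_{l+1}} - M_{i_{l+1}}/8,\, r_{i_l} - M_{i_l}/8}$ and $V = B_{r_{i_{l+s}}}({\bm 0})^c$; by independence of edges,
$$
\mathds{P}[L_l \geq s] = 1 - \exp\left\{-\beta \int_U \int_V |{\bm u} - {\bm v}|^{-2d} \,d{\bm v}\,d{\bm u}\right\}.
$$
Since $1 - e^{-x}$ is monotone, it suffices to bound the double integral by $C_d/(1/8 + 2^{s-1})$ for some $C_d$ depending only on $d$ (so that $c_2 := \beta C_d$ works).

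For the inner integral I would apply the triangle inequality: for ${\bm u} \in U$ we have $|{\bm u}| \leq r_{i_{l+1}} - M_{i_{l+1}}/8$, so any ${\bm v}$ with $|{\bm v}| \geq r_{i_{l+s}}$ satisfies $|{\bm u} - {\bm v}| \geq r_{i_{l+s}} - |{\bm u}| \geq (r_{i_{l+s}} - r_{i_{l+1}}) + M_{i_{l+1}}/8 =: \rho_s$. Combined with the standard polar identity $\int_{|{\bm y}| \geq t} |{\bm y}|^{-2d} \,d{\bm y} = \omega_{d-1} t^{-d}/d$, this yields the uniform (in ${\bm u}$) estimate $\int_V |{\bm u} - {\bm v}|^{-2d} \,d{\bm v} \leq \widetilde{c}_d \, \rho_s^{-d}$. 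For the outer integral I would only use the crude bound $|U| \leq |B_{r_{i_{l+1}}}({\bm 0})| \leq \widehat{c}_d \, r_{i_{l+1}}^d$.

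What remains are two purely arithmetic estimates coming from the geometric progression $M_k = N^{k-1} M_1$ with $N \geq 2$: first, $r_{i_{l+1}} = \sum_{k=1}^{i_{l+1}} M_k \leq \frac{N}{N-1} M_{i_{l+1}} \leq 2 M_{i_{l+1}}$; and second, since $i_1 < \cdots < i_k$ are distinct integers, one has $i_{l+s} \geq i_{l+1} + s - 1$, which forces $r_{i_{l+s}} - r_{i_{l+1}} \geq M_{i_{l+s}} \geq N^{s-1} M_{i_{l+1}} \geq 2^{s-1} M_{i_{l+1}}$. Substituting these into the previous bounds and cancelling $M_{i_{l+1}}$ yields
$$
\int_U \int_V |{\bm u} - {\bm v}|^{-2d} \,d{\bm v}\,d{\bm u} \leq \widetilde{c}_d\, \widehat{c}_d \cdot 2^d \cdot (2^{s-1} + 1/8)^{-d}.
$$
Since $s \geq 1$ gives $2^{s-1} + 1/8 \geq 1$ and $d \geq 1$, the $d$-th power dominates the first power, so the right-hand side is at most $C_d/(1/8 + 2^{s-1})$, finishing the proof after setting $c_2 := \beta C_d$.

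The only substantive step is the second arithmetic estimate: this is where the combinatorial fact that $S$ is an ascending sequence of distinct integers is converted, via the geometric growth of the $M_i$, into exponential decay in $s$ of $\mathds{P}[L_l \geq s]$. I do not foresee any genuine obstacle beyond careful bookkeeping of constants.
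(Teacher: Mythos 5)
Your argument is essentially identical to the paper's proof: both reduce to the product formula for the edge intensity, bound the annulus by the ball $B_{r_{i_{l+1}}}({\bm 0})$, evaluate the tail integral in polar coordinates, and then combine $r_{i_{l+1}}\le 2M_{i_{l+1}}$ with $i_{l+s}-i_{l+1}\ge s-1$ and the geometric growth $M_{i+1}=NM_i$, $N\ge 2$, finally trading the $d$-th power for the first power at the cost of a $d$-dependent constant. The only slip is at $s=1$, where $r_{i_{l+s}}-r_{i_{l+1}}=0$ so your claimed lower bound $r_{i_{l+s}}-r_{i_{l+1}}\ge M_{i_{l+s}}$ fails; there one must instead rely on the surviving $M_{i_{l+1}}/8$ term in $\rho_1$, which still gives the stated estimate after enlarging $C_d$ (the paper absorbs this same case into its constant).
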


\begin{proof}
We write $\mathrm{dist}(\cdot, \cdot)$ for the Euclidean distance between two sets. It follows from the definition of $L_l$ and the fact that 
$$
\text{dist}(\mathbb{A}_{r_{i_{l+1}}- M_{i_{l+1}}/8,r_{i_l}-M_{i_l}/8}, B_{r_{i_{l+s}}}({\bm 0})^c)\geq r_{i_{l+s}}-r_{i_{l+1}}+ M_{i_{l+1}}/8
$$
that for each $l\in [0,k-1]_{\mathds{Z}}$ and each $s\in [1,k-l]_{\mathds{Z}}$, there exist constants $\widetilde{c}_1,\cdots,\widetilde{c}_4>0$ (all depending only on $d$ and $\beta$) such that
\begin{align*}
\mathds{P}[L_l\geq s]&
=1-\exp\left\{-\int_{\mathbb{A}_{r_{i_{l+1}}- M_{i_{l+1}}/8,r_{i_l}-M_{i_l}/8}}\int_{B_{r_{i_{l+s}}}({\bm 0})^c}\frac{\beta}{|{\bm u}-{\bm v}|^{2d}}\d {\bm u}\d {\bm v}\right\}\\
&\leq 1-\exp\left\{-\widetilde{c}_1\left((r_{i_{l+1}}- M_{i_{l+1}}/8)^d-(r_{i_l}-M_{i_l}/8)^d\right)\int_{r_{i_{l+s}}-r_{i_{l+1}}+ M_{i_{l+1}}/8 }^{+\infty}\frac{1}{t^{d+1}}\d t\right\}\\
&\leq 1-\exp\left\{-\widetilde{c}_2(r_{i_{l+1}}-M_{i_{l+1}}/8)^d(r_{i_{l+s}}-r_{i_{l+1}}+M_{i_{l+1}}/8)^{-d}\right\}\\
&\leq 1-\exp\left\{-\widetilde{c}_3\frac{2-1/8}{1/8+N^{s-1}}\right\}\leq 1-\exp\left\{-\frac{\widetilde{c}_4}{1/8+2^{s-1}}\right\},
\end{align*}
where the third inequality is from $i_{l+s}-i_{l+1}\geq s-1$ and $M_{i+1}/M_i=N$ (see \eqref{def-M}), and the last inequality is from the fact that $N\geq 2$ in \eqref{condit-Mi}.
\end{proof}

Let $W_0=\{0\}$ and for $m\geq 1$, we inductively define
\begin{equation}\label{def-Wm}
W_m=\{i_s\in S:\ \text{there exists }i_l\in W_{m-1}\ \text{such that }s\leq l+L_l\}\setminus W_{m-1}.
\end{equation}
The above definition in fact corresponding to a ``spreading'' procedure of the edge set for the model, where in the $m$-th step we explore long edges in
\begin{equation}\label{edgeset-Wm-1}
\bigcup_{i_l\in W_{m-1}} \mathcal{E}_{\mathbb{A}_{r_{i_{l+1}}- M_{i_{l+1}}/8,r_{i_l}-M_{i_l}/8}\times B_{r_{i_{s}}}({\bm 0})^c}
\end{equation}
to determine if the element $i_s$ is in $W_m$. That is, if the edge set in \eqref{edgeset-Wm-1} is non-empty, then $i_s\in W_m$.
We see that $i_s\in W_m$ (namely, the annulus $\mathbb{A}_{r_{i_s},r_{i_s}-M_{i_s}/8}$ is newly covered at the $m$-th step) if it was not covered by the edge set
\begin{equation*}
\bigcup_{i_l\in W_{m-2}} \mathcal{E}_{\mathbb{A}_{r_{i_{l+1}}-M_{i_{l+1}}/8,r_{i_l}-M_{i_l}/8}\times B_{r_{i_{s}}}({\bm 0})^c},
\end{equation*}
but covered by the edge set in \eqref{edgeset-Wm-1}. The ``spreading procedure'' will stop upon $W_m=\emptyset$ and from \eqref{def-Wm} we can see that $W_{m'}=\emptyset$ for all $m'>m$ if $W_m=\emptyset$. Moreover, $\cup_{m\geq 0} W_m$ represents the set of subscripts for annuli (``spreaders'') at the end of the above spreading procedure. Let
\begin{equation}\label{def-Mk}
M_k=\min\left\{l\in \mathds{N}:\ i_l\in \cup_{m\geq 0}W_m\right\}
\end{equation}
be the subscript of the last pair of annuli that are covered in this spreading procedure. Combining this with the definition of $\xi_i$, we can see that
\begin{equation}\label{xi-M}
\mathds{P}[\xi_{i_1}=0,\cdots, \xi_{i_k}=0]\leq \mathds{P}[M_k\geq k].
\end{equation}
We will use some estimates for the Firework process to bound the right-hand side of \eqref{xi-M} from above, which in turn provides an upper bound on the left-hand side of \eqref{xi-M}.

\begin{lemma}\label{lem-tail-Mk}
For all $d\geq 1$ and all $\beta>0$, there exists $\kappa\in (0,1)$ {\rm(}depending only on $d$ and $\beta${\rm)} such that for all $k\geq 1$,
$$
\mathds{P}[M_k\geq k]\leq \kappa^k.
$$
\end{lemma}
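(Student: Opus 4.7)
The plan is to identify $\{M_k\geq k\}$ with the event that the Firework process on $\mathds{Z}_+$ driven by the jumps $L_0,L_1,\ldots$ reaches position $k$ (following the viewpoint of \cite{DFH25}), and then to show that the terminal position $R_\infty$ of this process has exponential tails. I would first introduce the Markov chain $Z_l:=\max_{l'\leq l}(l'+L_{l'})-l$, which tracks the lead of the Firework frontier over the current position and satisfies $Z_{l+1}=\max(Z_l-1,L_{l+1})$, so $Z$ can decrease by at most one per step but may jump up to any height dictated by $L_{l+1}$. The process terminates (that is, $Z$ first reaches $0$) only by transitioning from the \emph{dangerous} state $Z=1$ via $\{L_{l+1}=0\}$, an event of probability $p_0:=e^{-8c_2/9}=\mathds{P}[L=0]>0$ by Lemma~\ref{tail-Ll}.

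The central step is an excursion decomposition of the trajectory of $Z$: let $\sigma_1<\sigma_2<\cdots$ denote the successive visits to state~$1$, set $E_n:=\sigma_{n+1}-\sigma_n$, and let $N$ be the index of the excursion during which the process dies. Since consecutive excursions use disjoint batches of $L$-values (the fresh indices $\sigma_{n-1}+1,\ldots,\sigma_n$), the strong Markov property yields that the $E_n$'s are i.i.d.\ given their existence, and that $N$ is geometric with parameter $p_0$. The principal technical ingredient is to prove that each excursion length has an exponential moment: writing $g(j):=\mathds{E}[e^{\mu\tau_j}]$ for the MGF of the first-return time $\tau_j$ to state~$1$ starting from $Z=j$, one obtains the recursion
\[
g(j)=e^\mu\bigl((1-a_j)g(j-1)+\mathds{E}\bigl[g(L)\mathbf{1}\{L\geq j\}\bigr]\bigr),\qquad g(1)=1,
\]
where $a_s:=\mathds{P}[L\geq s]$. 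The sharp tail $a_s\leq 2c_2\cdot 2^{-s}$ supplied by Lemma~\ref{tail-Ll} then supports a self-consistent geometric ansatz $g(j)\leq C\rho^j$ for some $\rho\in(1,2)$ and $\mu>0$ small enough, since $\mathds{E}[\rho^L]<\infty$ for any $\rho<2$.

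Combining these estimates, $R_\infty$ admits (up to an initial-transient contribution controlled by $L_0$) the representation $R_\infty=L_0+\sum_{n=1}^N E_n$ with mutually independent summands, so
\[
\mathds{E}\bigl[e^{\mu R_\infty}\bigr]\;\leq\;\mathds{E}\bigl[e^{\mu L_0}\bigr]\cdot\frac{p_0\,\mathds{E}[e^{\mu E}]}{1-(1-p_0)\,\mathds{E}[e^{\mu E}]},
\]
which is finite for $\mu>0$ sufficiently small because $\mathds{E}[e^{\mu E}]\to 1$ as $\mu\to 0^+$ while $1-p_0<1$. Markov's inequality then yields $\mathds{P}[M_k\geq k]\leq C e^{-\mu k}$, and a slight decrease of $\mu$ absorbs the prefactor to produce $\mathds{P}[M_k\geq k]\leq\kappa^k$ for some $\kappa\in(0,1)$ depending only on $d$ and $\beta$. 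The hardest part is the exponential-moment bound on excursion lengths: upward jumps inside an excursion can extend its duration arbitrarily far, and only the sharp $2^{-s}$ decay of the $L$-tail from Lemma~\ref{tail-Ll} keeps the return-time recursion under control.
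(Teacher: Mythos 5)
Your overall strategy is genuinely different from the paper's: the paper first stochastically dominates the independent but \emph{non-identically distributed} reaches $L_0,\dots,L_{k-1}$ by i.i.d.\ variables with distribution function $\widetilde{\alpha}$ (see \eqref{Mk-Mktilde}) and then invokes \cite[Proposition 1]{GGJR14} as a black box for the exponential tail of the Firework process, whereas you attempt a self-contained renewal/excursion proof. The domination step is one you skip but need: you write $\mathds{P}[L=0]$ and $\mathds{E}[\rho^L]$ as if there were a single law for $L$, while Lemma \ref{tail-Ll} only gives bounds uniform in $l$, so you must first pass to the dominating i.i.d.\ law. Granting that, your frontier chain $Z_{l+1}=\max(Z_l-1,L_{l+1})$, the identification of death with the transition $1\to 0$, and the geometric number of excursions are all correct.

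The genuine gap is in what you yourself call the principal technical ingredient, the exponential moment of the excursion length. First, the recursion $g(j)=e^\mu\bigl((1-a_j)g(j-1)+\mathds{E}[g(L)\mathbf{1}\{L\geq j\}]\bigr)$ is not a forward recursion: it expresses $g(j)$ in terms of $g(s)$ for all $s\geq j$, so ``verifying'' an ansatz presupposes an a priori finiteness of $g$ that you have not established. Second, and more concretely, the ansatz $g(j)\leq C\rho^j$ requires $e^{-\mu}\geq (1-a_j)\rho^{-1}+\rho^{-j}\mathds{E}\bigl[\rho^L\mathbf{1}\{L\geq j\}\bigr]$ for \emph{every} $j\geq 2$ (the constant $C$ cancels), and the second term is of order $c_2\,2^{-j}/(1-\rho/2)$; at $j=2$ this exceeds $1$ for every admissible $\rho\in(1,2)$ unless $c_2$ is small, which Lemma \ref{tail-Ll} does not provide. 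Indeed the available bounds do not even guarantee negative drift of $Z$ at state $2$ (the drift bound is of order $c_2-1$), so no single geometric Lyapunov function can work uniformly down to $j=2$. The claim is still true, but its proof needs either a two-scale argument (geometric bound for $j\geq j_0(c_2,\rho)$ plus a finite irreducible linear system for $2\leq j<j_0$) or the block-covering argument underlying \cite[Proposition 1]{GGJR14}. Finally, ``a slight decrease of $\mu$ absorbs the prefactor'' does not yield $\kappa^k$ for small $k$ (at $k=1$ you would need $\kappa\geq Ce^{-\mu}$, which may exceed $1$); as in the paper's choice of $\kappa_2$, you need the separate trivial bound $\mathds{P}[M_k\geq k]\leq\mathds{P}[L_{k-1}\geq 1]<1$ for $k$ below a fixed threshold.
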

\begin{proof}
For $l\in [0,k-1]_{\mathds{Z}}$ and $s\in [0,k-l]_{\mathds{Z}}$, denote
$$
\alpha_l(s)=\mathds{P}[L_l\leq s]=1-\mathds{P}[L_l\geq s+1].
$$
Then from Lemma \ref{tail-Ll}, we can see that for each $s\in [1,k]_{\mathds{Z}}$,
\begin{equation}\label{alpha-tilde}
\min_{l\leq k-s}\alpha_l(s)\geq \exp\left\{-\frac{c_2}{1/8+2^{s}}\right\}=:\widetilde{\alpha}(s),
\end{equation}
where $c_2>0$ is the constant (depending only on $d$ and $\beta$) defined in Lemma \ref{tail-Ll}. Moreover, we have
\begin{equation}\label{log-alphatilde}
\log\left(\prod_{s\geq 0} \widetilde{\alpha}(s)\right)=-c_2\sum_{s\geq 0}\frac{1}{1/8+2^s}>-\infty.
\end{equation}

For convenience, we extend the definition of $\widetilde{\alpha}(s)$ in \eqref{alpha-tilde} to $\mathds{R}$. Specifically, we define
\begin{equation*}
\widetilde{\alpha}(s)=
\begin{cases}
0,\quad & s<0,\\
\exp\left\{-\frac{c_2}{1/8+2^{s}}\right\},\quad &s\geq 0.
\end{cases}
\end{equation*}
Then it is easy to check that $\widetilde{\alpha}(s)$ is now a distribution function.
We consider a sequence of i.i.d.\ random variables $\widetilde{L}_0,\cdots, \widetilde{L}_{k-1}$ with the distribution $\widetilde{\alpha}(s)$, and define $\widetilde{W}_m$ and $\widetilde{M}_k$ according to \eqref{def-Wm} and \eqref{def-Mk} by replacing $L_l$ with $\widetilde{L}_l$, respectively. Then from \eqref{alpha-tilde} and the independence of $L_0,\cdots, L_{k-1}$, we can see that
\begin{equation}\label{Mk-Mktilde}
\mathds{P}[M_k\geq s]\leq \mathds{P}\left[\widetilde{M}_k\geq s\right]\quad \text{for all }s\in [0,k]_{\mathds{Z}}.
\end{equation}
Additionally, it follows from  \eqref{alpha-tilde} and \eqref{log-alphatilde} that $\widetilde{\alpha}(s)$ increases exponentially to 1 as $s\to \infty$ and $\prod_{s\geq 0}\widetilde{\alpha}(s)>0$, which implies that conditions stated in \cite[Proposition 1]{GGJR14} are satisfied. Consequently, by applying \cite[Proposition 1]{GGJR14} to $\widetilde{\alpha}(s)$, we get that there exist $\kappa_1\in (0,1)$ and $R>0$ (both depending only on $d$ and $\beta$) such that for all $k\geq R$,
\begin{equation}\label{tail-Mk}
\mathds{P}\left[\widetilde{M}_k\geq k\right]\leq \kappa_1^k.
\end{equation}
Moreover, we choose $\kappa_2\in (0,1)$ (depending only on $d$ and $\beta$) such that $\kappa_2^R\geq 1-\exp\{-8c_2/9\}$.
By combining this with Lemma \ref{tail-Ll}, we see that for all $k<R$,
\begin{equation}\label{tail-Mk2}
\mathds{P}\left[\widetilde{M}_k\geq k\right]\leq \mathds{P}[L_{k-1}\geq 1]\leq 1-\exp\{-8c_2/9\}\leq \kappa_2^k.
\end{equation}
Now, let us denote $\kappa=\max\{\kappa_1,\kappa_2\}$. Then \eqref{tail-Mk} and \eqref{tail-Mk2} yield that $\mathds{P}[\widetilde{M}_k\geq k]\leq \kappa^k$ for all $k\geq 1$. Hence, we obtain the desired statement by combining this with \eqref{Mk-Mktilde}.
\end{proof}

With the above lemmas at hand, we can provide the

\begin{proof}[Proof of Lemma \ref{prob-bad}]
Recall that $M_k$ is defined in \eqref{def-Mk}. Then from it and Lemma \ref{lem-tail-Mk}, we arrive at
$$
w_k=\mathds{P}[\xi_{i_1}=0, \xi_{i_2}=0,\cdots,\xi_{i_k}=0]\leq \mathds{P}[M_k\geq k]\leq \kappa^k,
$$
where $\kappa$ is the constant defined in Lemma \ref{lem-tail-Mk}, depending only on $d$ and $\beta$.
\end{proof}

Combining Lemma \ref{prob-bad} with \cite[Theorem 3.4]{AA97} (see also \cite[Theorem 1]{RV10}), we can present the
\begin{proof}[Proof of Proposition \ref{numb-Di}]
Note that Lemma \ref{prob-bad} implies that conditions in \cite[Theorem 1]{RV10} are satisfied with $\delta=\kappa\in (0,1)$. Thus, by applying \cite[Theorem 1]{RV10} with $\delta=\kappa$ and $\gamma=(1+\kappa)/2$, we can conclude that
\begin{equation}\label{sum-xi}
\mathds{P}\left[\sum_{i=1}^K\xi_i\leq (1-\kappa)K/2\right]=\mathds{P}\left[\sum_{i=1}^K(1-\xi_i)\geq \gamma K\right]\leq \exp\{-KD(\gamma\|\delta)\},
\end{equation}
where $D(\gamma\|\delta):=\gamma \log\frac{\gamma}{\delta}+(1-\gamma)\log\frac{1-\gamma}{1-\delta}$ is the binary relative entropy between $\gamma$ and $\delta$. In addition, note that  $D(\gamma\|\delta)\geq 2(\gamma-\delta)^2=(1-\kappa)^2/2$. Plugging this into \eqref{sum-xi} yields the desired result.
\end{proof}

\subsection{Further events on metric regularities and long edges}
Recall that for $r>s>0$, $\mathbb{A}_{r,s}=B_r({\bm 0})\setminus B_s({\bm 0})$.
For $i\in [1,K]_{\mathds{Z}}$, let us denote
$$
D_i=D(\mathbb{A}_{r_{i-1}+M_i/8,r_{i-1}}, \mathbb{A}_{r_i,r_{i}-M_i/8}; \mathbb{A}_{r_i, r_{i-1}})
$$
as the internal $D$-distance between $\mathbb{A}_{r_{i-1}+M_i/8,r_{i-1}}$ and $ \mathbb{A}_{r_i,r_{i}-M_i/8}$ restricted to $\mathbb{A}_{r_i, r_{i-1}}$.
From the scale covariance of the LRP metric (see \cite[Theorem 1.4]{DFH23+}) and \eqref{di}, it is clear that
\begin{equation}\label{di-2}
\begin{aligned}
 \mathds{P}\left[D_i\geq c_{*,1} M_i^{\theta}\right]
&=\mathds{P}\left[M_i^\theta D(\mathbb{A}_{r_{i-1}/M_i+1/8,r_{i-1}/M_i}, \mathbb{A}_{r_i/M_i,r_i/M_i-1/8};\mathbb{A}_{r_i/M_i,r_{i-1}/M_i } )\geq c_{*,1} M_i^{\theta}\right]\\
&\geq \mathds{P}\left[D(B_{1/2}({\bm 0}), \mathbb{A}_{1,7/8};B_1({\bm 0}))\geq c_{*,1}\right]\geq c_{*,2},
\end{aligned}
\end{equation}
where $c_{*,1}, c_{*,2}>0$ (both depending only on $d$ and $\beta$) are the constants defined in \eqref{di}.

Recall that $\delta>0$ is the constant defined in \eqref{delta0}.
In the following, we divide $\mathbb{A}_{r_{i-1}+ M_i/8,r_{i-1}}$ and $ \mathbb{A}_{r_i,r_{i}-M_i/8}$ into $\delta M_i$ small cubes with side length $\delta M_i$, denoted by  $V^{(1)}_{i,1},\cdots, V^{(1)}_{i,s_{i,1}}$ and $V^{(2)}_{i,1},\cdots, V^{(2)}_{i,s_{i,2}}$, respectively. It can be observed that
\begin{equation}\label{s1s2}
\max\{s_{i,1},s_{i,2}\}\leq \delta^{-2d}.
\end{equation}

We first define an event on the regularity of the $D$-diameters of $V_{i,k}^{(j)}$ for all $j=1,2$ and all $k\in [1, s_{i,j}]_{\mathds{Z}}$.

\begin{definition}\label{def-Fi}
For each $i\in [1,K]_{\mathds{Z}}$, let $E_{i,2}$ be the event that $ \text{diam}(V^{(j)}_{i,k};D)\leq \delta^{\theta/2}M_i^{\theta}$ for all $j=1,2$ and all $k\in [1, s_{i,j}]_{\mathds{Z}}$.
\end{definition}


Our other events concern whether there are long edges directly connecting $V^{(1)}_{i,\cdot}$ and $V^{(2)}_{i,\cdot}$.

\begin{definition}\label{def-Ei3i4}
For fixed $i\in [1,K]_{\mathds{Z}}$,
\begin{itemize}

\item[(1)] let $E_{i,3}$ be the event that $ V^{(1)}_{i,k}\nsim V^{(2)}_{i,l}\ \text{for all }k\in [1,s_{i,1}]_{\mathds{Z}}$ and all $l\in [1, s_{i,2}]_{\mathds{Z}}$, and $D_i\geq c_{*,1} M_i^{\theta}$, where $c_{*,1}>0$ (depending only on $d$ and $\beta$) is the constant in \eqref{di}.
\medskip

\item[(2)] let $E_{i,4}$ be the event that $ V^{(1)}_{i,k}\sim V^{(2)}_{i,l}\ \text{for all }k\in [1,s_{i,1}]_{\mathds{Z}}$ and all $l\in [1, s_{i,2}]_{\mathds{Z}}$.
\end{itemize}
\end{definition}

\begin{remark}
	Later, which scenario in Definition~\ref{def-Ei3i4} occurs corresponds to a Bernoulli variable in the application of Sperner's theorem.
\end{remark}

In the following, we will estimate the probabilities of events $E_{i,2}$, $E_{i,3}$ and $E_{i,4}$.

\begin{lemma}\label{prop-Fi}
For all $d\geq 1$ and all $\beta>0$, there exists a constant $c_3=c_3(d,\beta)>0$ {\rm(}depending only on $d$ and $\beta${\rm)} such that for each $i\in [1,K]_{\mathds{Z}}$, we have
$$
\mathds{P}[E_{i,2}]\geq 1-c_3\delta^{-4d}\exp\left\{-\delta^{-\theta/2}\right\}=:p_{2}(\delta).
$$
\end{lemma}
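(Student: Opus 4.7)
The plan is to bound $\mathds{P}[E_{i,2}^c]$ by a union bound over the sub-cubes, then reduce every single-cube diameter to that of a unit cube by the scale covariance of $D$, and finally invoke an exponential upper-tail estimate on the $D$-diameter of the unit cube. Concretely I would first write
$$\mathds{P}[E_{i,2}^c]\le \sum_{j=1}^2\sum_{k=1}^{s_{i,j}}\mathds{P}\bigl[\text{diam}(V_{i,k}^{(j)};D)>\delta^{\theta/2}M_i^\theta\bigr]$$
and use $\max\{s_{i,1},s_{i,2}\}\le \delta^{-2d}$ from \eqref{s1s2} to cap the number of summands by $2\delta^{-2d}$.

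Next, since each $V_{i,k}^{(j)}$ is a cube of side length $\delta M_i$, the translation and scale covariance of the $\beta$-LRP metric (see \cite[Axiom IV in Section 1.3]{DFH23+}) yields $\text{diam}(V_{i,k}^{(j)};D)\stackrel{d}{=}(\delta M_i)^\theta \text{diam}(V_1(\bm 0);D)$, so every summand equals the single scalar quantity $\mathds{P}[\text{diam}(V_1(\bm 0);D)>\delta^{-\theta/2}]$. It is essential that the dependence on $i$ and on $M_i$ disappears at this point, since $M_i$ can be extremely small for small $\varepsilon$ and no bound proportional to a negative power of $M_i$ would be acceptable.

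Finally I would invoke an exponential upper tail bound of the form $\mathds{P}[\text{diam}(V_1(\bm 0);D)>t]\le \widetilde c\exp(-t)$ for all $t>0$, which should follow from the tightness of the LRP metric in \cite[Theorem 1.4]{DFH23+} together with the concentration underlying the graph distance estimates in \cite{Ba23}. Setting $t=\delta^{-\theta/2}$ and combining with the union bound gives $\mathds{P}[E_{i,2}^c]\le 2\widetilde c\delta^{-2d}\exp(-\delta^{-\theta/2})$, which is comfortably dominated by $c_3\delta^{-4d}\exp(-\delta^{-\theta/2})$ for a suitable $c_3=c_3(d,\beta)$.

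The real obstacle is the last step: extracting a truly exponential-in-$t$ tail rather than merely finiteness in probability. A natural route is to cover $V_1(\bm 0)$ by finitely many small Euclidean balls, control pairwise $D$-distances through repeated applications of the tightness estimate along a chaining scheme, and iterate to amplify polynomial tails into exponential ones; alternatively one could bootstrap the moment bounds from \cite{Ba23}. The generous $\delta^{-4d}$ prefactor in the statement is clearly designed to absorb polynomial-in-$\delta$ corrections that would appear if the input tail were of the form $\widetilde c\, t^\alpha \exp(-t)$ instead of $\widetilde c\exp(-t)$.
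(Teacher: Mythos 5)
Your proposal follows essentially the same route as the paper: a union bound over the at most $2\delta^{-2d}$ sub-cubes, a reduction of each diameter to the unit scale by scale covariance, and an exponential tail bound at level $t=\delta^{-\theta/2}$. The one point where you diverge is that you treat the exponential tail of $\mathrm{diam}(V_1(\bm 0);D)$ as the ``real obstacle'' requiring a chaining or bootstrapping argument; in fact no such work is needed, because the tightness axiom of the metric (Axiom V in Section 1.3 of \cite{DFH23+}, also invoked later in Section \ref{sect-dim-up}) already provides a uniform exponential moment bound $\mathds{E}\bigl[\exp\bigl\{\mathrm{diam}(V;D)/(\text{side length})^\theta\bigr\}\bigr]\leq \widetilde{c}_1$, so a single application of the exponential Markov inequality gives exactly the tail $\widetilde{c}_1\exp\{-\delta^{-\theta/2}\}$ you want. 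With that citation in place your argument is complete and matches the paper's.
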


\begin{proof}
According to the tightness of the distance (see \cite[Axiom V in Section 1.3]{DFH23+}), there exists a constant $\widetilde{c}_1=\widetilde{c}_1(d,\beta)>0$ (depending only on $d$ and $\beta$) such that for each $j=1,2$ and each $k=1,\cdots,s_{i,j}$,
\begin{equation*}
\begin{aligned}
&\mathds{P}\left[\text{diam}(V_{i,k}^{(j)};D)>\delta^{\theta/2}M_i^{\theta}\right]
=\mathds{P}\left[\frac{\text{diam}(V_{i,k}^{(j)};D)}{(\delta M_i)^{\theta}}>\delta^{-\theta/2}\right]\\
&\leq \exp\left\{-\delta^{-\theta/2}\right\}\mathds{E}\left[\exp\left\{\frac{\text{diam}(V_{i,k}^{(j)};D)}{(\delta M_i)^\theta}\right\}\right]
\leq \widetilde{c}_1 \exp\left\{-\delta^{-\theta/2}\right\}.
\end{aligned}
\end{equation*}
From this and \eqref{s1s2} we obtain that
$$
\mathds{P}[E_{i,2}]\geq 1-\widetilde{c}_1\delta^{-4d}\exp\left\{-\delta^{-\theta/2}\right\}.
$$
Hence, the proof is complete.
\end{proof}


\begin{lemma}\label{p3p4}
For all $d\geq 1$ and all $\beta>0$, there exist constants $c_4=c_4(d,\beta)>0$ and $c_5=c_5(d,\beta)>0$ {\rm(}both depending only on $d$ and $\beta${\rm)} such that for each $i\in [1,K]_{\mathds{Z}}$, we have
\begin{equation*}\label{pi1}
\mathds{P}[E_{i,3}]\geq c_{*,2}\exp\left\{-c_4\delta^{-2d}\right\}=:p_{3}(\delta)
\end{equation*}
and
\begin{equation*}\label{pi2}
\mathds{P}[E_{i,4}]\geq \left(1-\exp\left\{-c_5\delta^{2d}\right\}\right)^{\delta^{-4d}}:= p_{4}(\delta),
\end{equation*}
where $c_{*,2}>0$ {\rm(}depending only on $d$ and  $\beta${\rm)} is the constant defined in \eqref{di}.
\end{lemma}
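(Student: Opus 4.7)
\textbf{Proof plan for Lemma \ref{p3p4}.}

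For the lower bound on $\mathds{P}[E_{i,3}]$, the plan is to decouple the ``no direct long edge'' condition from the distance lower bound via the FKG inequality. Both the event $\{D_i \geq c_{*,1} M_i^\theta\}$ and the event $G_i = \{V^{(1)}_{i,k} \nsim V^{(2)}_{i,l} \text{ for all } k \in [1,s_{i,1}]_{\mathds Z}, \; l \in [1,s_{i,2}]_{\mathds Z}\}$ are decreasing in the set of long edges (adding a long edge can only shorten $D_i$ and can only destroy $G_i$). Since the nearest-neighbour edges are deterministic, the only source of randomness is the independent family of long edges, so FKG applies and gives
\begin{equation*}
\mathds{P}[E_{i,3}] \;=\; \mathds{P}\big[\{D_i \geq c_{*,1} M_i^\theta\} \cap G_i\big] \;\geq\; \mathds{P}[D_i \geq c_{*,1} M_i^\theta]\cdot \mathds{P}[G_i] \;\geq\; c_{*,2}\,\mathds{P}[G_i],
\end{equation*}
using \eqref{di-2} for the first factor. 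To bound $\mathds{P}[G_i]$ from below, observe that different pairs $(k,l)$ give disjoint sets of potential long edges, so the events $\{V^{(1)}_{i,k} \nsim V^{(2)}_{i,l}\}$ are mutually independent. For each pair, any $\bm u \in V^{(1)}_{i,k}$ and $\bm v \in V^{(2)}_{i,l}$ satisfy $|\bm u - \bm v| \geq |\bm v| - |\bm u| \geq 3 M_i/8 $ (actually $\geq 3M_i/4$ by Definition \ref{def-A} style reasoning), so
\begin{equation*}
\int_{V^{(1)}_{i,k}}\!\int_{V^{(2)}_{i,l}} \frac{\beta}{|\bm u - \bm v|^{2d}}\,\d\bm u\,\d\bm v \;\leq\; \widetilde c \,\delta^{2d}
\end{equation*}
for a constant $\widetilde c = \widetilde c(d,\beta)$. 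Multiplying across the at most $s_{i,1}s_{i,2} \leq \delta^{-4d}$ pairs (using \eqref{s1s2}) yields $\mathds{P}[G_i] \geq \exp\{-c_4 \delta^{-2d}\}$, which closes the first half of the statement.

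For the lower bound on $\mathds{P}[E_{i,4}]$, the approach is the same independence observation applied in the opposite direction: the events $\{V^{(1)}_{i,k} \sim V^{(2)}_{i,l}\}$ are independent across pairs, so $\mathds{P}[E_{i,4}]$ factorizes as a product and it suffices to lower bound each factor. Since $V^{(1)}_{i,k}, V^{(2)}_{i,l}\subset \mathbb{A}_{r_i,r_{i-1}}\subset B_{r_i}(\bm 0)$ with $r_i \leq 2 M_i$ (from $r_i/M_i = \sum_{k\leq i} N^{k-i} \leq N/(N-1)\leq 2$ by \eqref{condit-Mi}), one has $|\bm u - \bm v| \leq 2r_i \leq 4M_i$ for all $\bm u \in V^{(1)}_{i,k}$, $\bm v \in V^{(2)}_{i,l}$. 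Each cube has volume $(\delta M_i)^d$, so the connection intensity is bounded below by
\begin{equation*}
\int_{V^{(1)}_{i,k}}\!\int_{V^{(2)}_{i,l}} \frac{\beta}{|\bm u - \bm v|^{2d}}\,\d\bm u\,\d\bm v \;\geq\; \frac{\beta\,(\delta M_i)^{2d}}{(4M_i)^{2d}} \;=\; c_5\,\delta^{2d},
\end{equation*}
so $\mathds{P}[V^{(1)}_{i,k} \sim V^{(2)}_{i,l}] \geq 1 - \exp\{-c_5 \delta^{2d}\}$. Taking the product over the at most $\delta^{-4d}$ pairs gives the stated bound $p_4(\delta)$.

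The only subtle step is the FKG application for $E_{i,3}$: one must be careful that $\{D_i \geq c_{*,1}M_i^\theta\}$ is indeed a decreasing event on the relevant probability space, which is true because $D_i$ is the infimum of path lengths and removing long edges only eliminates shortcut paths. Everything else is routine geometric and Poisson-type computation, essentially repeating the estimates already carried out in the proof of Lemma \ref{lem-probA} and Lemma \ref{tail-Ll}, with care taken that the constants $c_4, c_5$ depend only on $d$ and $\beta$ (in particular, that the $i$-dependence cancels thanks to the scale invariance of the ratio $M_i/r_i$).
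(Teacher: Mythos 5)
Your proposal is correct and follows essentially the same route as the paper: independence of the Poisson edge counts over the (essentially disjoint) products $V^{(1)}_{i,k}\times V^{(2)}_{i,l}$, per-pair intensity bounds of order $\delta^{2d}$ from the distance estimates $3M_i/4\leq \mathrm{dist}(V^{(1)}_{i,k},V^{(2)}_{i,l})\leq 4M_i$, and \eqref{di-2} for the event $\{D_i\geq c_{*,1}M_i^\theta\}$. Your explicit FKG/Harris step for decoupling the distance event from the no-direct-edge event (both decreasing in the long-edge configuration) is a valid and slightly more careful rendering of what the paper compresses into ``the independence of edges.''
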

\begin{proof}
Fix $i\in [1,K]_{\mathds{Z}}$. For each $k\in [1,s_{i,1}]_{\mathds{Z}}$ and $l\in [1,s_{i,2}]_{\mathds{Z}}$, since $\text{dist}(V^{(1)}_{i,k},V^{(2)}_{i,l})\geq 3M_i/4$, we have
\begin{equation}\label{VV}
\begin{aligned}
\mathds{P}\left[V^{(1)}_{i,k}\nsim V^{(2)}_{i,l}\right]&=\exp\left\{-\int_{V_{i,k}^{(1)}}\int_{V_{i,l}^{(2)}}\frac{\beta}{|{\bm u}-{\bm v}|^{2d}}\d {\bm u}\d {\bm v}\right\}\\
&\geq \exp\left\{-\int_{V_{i,k}^{(1)}}\int_{V_{i,l}^{(2)}}\frac{\beta}{(3/4)^{2d}M_i^{2d}}\d {\bm u}\d {\bm v}\right\}
\geq \exp\left\{-\widetilde{c}_1\delta^{2d}\right\}
\end{aligned}
\end{equation}
for some $\widetilde{c}_1=\widetilde{c}_1(d,\beta)>0$ depending only on $d$ and $\beta$. Combining this with \eqref{s1s2}, \eqref{di-2} and the independence of edges in the LRP model gives that
\begin{equation*}
\mathds{P}[E_{i,3}]\geq c_{*,2}\left(\exp\left\{-\widetilde{c}_1\delta^{2d}\right\}\right)^{\delta^{-4d}}
=c_{*,2}\exp\left\{-\widetilde{c}_1\delta^{-2d}\right\}.
\end{equation*}

In addition, it follows from $\text{dist}(V_{i,k}^{(1)}, V_{i,l}^{(2)})\leq r_i+r_{i-1}+\gamma M_i$ that
\begin{equation}\label{VV2}
\begin{aligned}
\mathds{P}\left[V^{(1)}_{i,k}\sim V^{(2)}_{i,l}\right]&=1-\exp\left\{-\int_{V_{i,k}^{(1)}}\int_{V_{i,l}^{(2)}}\frac{\beta}{|{\bm u}-{\bm v}|^{2d}}\d {\bm u}\d {\bm v}\right\}\\
&\geq 1-\exp\left\{-\int_{V_{i,k}^{(1)}}\int_{V_{i,l}^{(2)}}\frac{\beta}{(r_i+r_{i-1}+ \gamma M_i)^{2d}}\d {\bm u}\d {\bm v}\right\}=1-\exp\left\{\widetilde{c}_2\delta^{2d}\right\}
\end{aligned}
\end{equation}
for some $\widetilde{c}_2=\widetilde{c}_2(d,\beta)>0$ depending only on $d$ and $\beta$.
Combining \eqref{VV}, \eqref{VV2} with the independence of edges and \eqref{s1s2}, we complete the proof.
\end{proof}

Recall that $\kappa\in (0,1)$ is the constant (depending only on $d$ and $\beta$) in Proposition \ref{numb-Di}.
We now define
\begin{equation}\label{pdelta}
p(\delta)=(1-\kappa)p_2(\delta)(p_3(\delta)+p_4(\delta))/2
\end{equation}
and introduce the following event.

\begin{definition}\label{def-F}
Let $F$ be the event that there exists $I \subset [1, K]_{\mathbb Z}$ with $|I| \geq p(\delta) K$ such that $E_{i,1}^c\cap E_{i,2}\cap (E_{i,3}\cup E_{i,4})$ occurs for each $i\in I$.     
\end{definition}

With the above lemmas at hand, we can establish the following estimate for the event $F$.

\begin{proposition}\label{prob-AF}
For all $d\geq 1$ and all $\beta>0$, we have
$$
\mathds{P}[A^c\cap F]\geq 1-\left[c_1KN^{-d}+ \exp\left\{-(1-\kappa)^2K/2\right\}+ \exp\left\{-p(\delta)K/8\right\}\right],
$$
where $c_1,\ \kappa$ are constants {\rm(}both depending only on $d$ and $\beta${\rm)} in Lemma \ref{lem-probA} and Proposition \ref{numb-Di} respectively, and $p(\delta)$ {\rm(}depending only on $d, \beta$ and $\delta${\rm)} is the constant defined in \eqref{pdelta}.
\end{proposition}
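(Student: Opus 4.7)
My plan is to combine the four preceding estimates in a two-step argument: a union bound controls $A^c \cap E_1^c$, and a concentration bound then produces many indices $i$ for which $E_{i,2} \cap (E_{i,3} \cup E_{i,4})$ also occurs. First, Lemma \ref{lem-probA} and Proposition \ref{numb-Di} yield $\mathds{P}[A^c \cap E_1^c] \geq 1 - c_1 K N^{-d} - \exp\{-(1-\kappa)^2 K/2\}$ by a union bound, and on this event the random set $J := \{i \in [1,K]_{\mathds{Z}} : \xi_i = 1\}$ satisfies $|J| > (1-\kappa) K/2$, so that $E_{i,1}^c$ holds for at least this many indices.

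Next, for each $i$ set $Y_i := \mathbf{1}_{E_{i,1}^c \cap E_{i,2} \cap (E_{i,3} \cup E_{i,4})}$. The events $E_{i,1}$ and $E_{i,3}, E_{i,4}$ depend on disjoint sets of edges: $E_{i,1}$ involves long edges from $B_{r_i - M_i/8}(\bm 0)$ to $B_{r_i}(\bm 0)^c$, while $E_{i,3}, E_{i,4}$ involve long edges between $V^{(1)}_{i,\cdot}$ and $V^{(2)}_{i,\cdot}$ together with the internal distance $D_i$ (which itself uses only edges with both endpoints in $\mathbb{A}_{r_i, r_{i-1}}$). Hence $E_{i,1}^c$ is independent of $E_{i,3} \cup E_{i,4}$. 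Combining this with Lemma \ref{prob-bad} applied to $k=1$ (giving $\mathds{P}[E_{i,1}^c] \geq 1-\kappa$), the disjointness $E_{i,3} \cap E_{i,4} = \emptyset$ (giving $\mathds{P}[E_{i,3} \cup E_{i,4}] \geq p_3(\delta)+p_4(\delta)$), and the bound $\mathds{P}[E_{i,2}] \geq p_2(\delta)$ from Lemma \ref{prop-Fi}, together with the localization of $E_{i,2}$ described below, I expect to obtain $\mathds{E}[Y_i] \geq (1-\kappa) p_2(\delta)(p_3(\delta)+p_4(\delta)) = 2p(\delta)$ and hence $\mathds{E}[\sum_{i=1}^K Y_i] \geq 2 p(\delta) K$.

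The main obstacle is that $E_{i,2}$ is defined via the \emph{global} metric $D$, which a priori breaks both the factorization of $\mathds{E}[Y_i]$ above and the independence of $Y_i$ across different $i$. To handle this I would replace $E_{i,2}$ by a stronger localized event $\tilde{E}_{i,2}$ in which the cube diameters are computed using $D$ restricted to a neighborhood of $\mathbb{A}_{r_i, r_{i-1}}$: monotonicity of distances under restriction gives $\tilde{E}_{i,2} \subset E_{i,2}$, while the tightness axiom of \cite[Axiom V in Section 1.3]{DFH23+} applied via scale covariance to the local metric still yields $\mathds{P}[\tilde{E}_{i,2}] \geq p_2(\delta)$ up to adjusting constants. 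With this replacement, the interior-annulus indicators are independent across $i$ since the relevant edge sets are disjoint, while the boundary-crossing indicators $\{\mathbf{1}_{E_{i,1}^c}\}_i$ are negatively associated by virtue of the joint bound $\kappa^k$ from Lemma \ref{prob-bad}. The resulting family $\{Y_i\}_i$ (with $E_{i,2}$ replaced by $\tilde{E}_{i,2}$) is therefore negatively associated, and a Chernoff bound for sums of negatively associated Bernoulli random variables with mean at least $2p(\delta)K$ gives $\mathds{P}[\sum_i Y_i \leq p(\delta) K] \leq \exp\{-p(\delta) K/4\} \leq \exp\{-p(\delta) K/8\}$. Since $\{\sum_i Y_i \geq p(\delta) K\} \subset F$ (taking $I := \{i : Y_i = 1\}$), a final union bound with the first-step estimate then completes the proof.
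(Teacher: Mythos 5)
Your overall architecture matches the paper's: split off $A$ and the bad event of Proposition \ref{numb-Di} by a union bound, then control how many of the remaining indices also satisfy $E_{i,2}\cap(E_{i,3}\cup E_{i,4})$ by a Chernoff-type bound. Your observation that $E_{i,2}$ is defined through the global metric $D$ (which threatens the independence across $i$ that the paper invokes), and your proposed localization of the cube diameters, is a legitimate point of care that the paper glosses over. However, there is a genuine gap in your concentration step. You assert that the indicators $\{\mathbf{1}_{E_{i,1}^c}\}_i$ are negatively associated ``by virtue of the joint bound $\kappa^k$ from Lemma \ref{prob-bad}.'' This fails on two counts. First, a bound of the form $\mathds{P}[\xi_{i_1}=0,\dots,\xi_{i_k}=0]\leq\kappa^k$ does not imply negative association; it is precisely the hypothesis of the generalized Chernoff bound of \cite{RV10} (equivalently \cite[Theorem 3.4]{AA97}), which is a different statement with a weaker input. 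Second, negative association is actually false here: each $E_{i,1}$ is an increasing event in the edge configuration, and for $i<j$ the relevant edge sets overlap (an edge from $B_{r_i-M_i/8}(\bm 0)$ to $B_{r_j}(\bm 0)^c$ witnesses both $E_{i,1}$ and $E_{j,1}$), so by the FKG inequality the events $E_{i,1}$ --- and hence their complements --- are \emph{positively} associated. A Chernoff bound for sums of negatively associated Bernoullis therefore cannot be applied to $\sum_i Y_i$.

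The repair is the route the paper takes, and your first step already sets it up: do not seek concentration of $\sum_i Y_i$ directly. Instead, consume the $\kappa^k$ bound once and for all through Proposition \ref{numb-Di} (whose proof is exactly the \cite{RV10} argument), obtaining at least $(1-\kappa)K/2$ indices with $E_{i,1}^c$ outside an event of probability $\exp\{-(1-\kappa)^2K/2\}$. Then observe that the (localized) events $E_{i,2}\cap(E_{i,3}\cup E_{i,4})$ are independent across $i$ and independent of $\sigma(\xi_1,\dots,\xi_K)$, each with probability at least $p_2(\delta)(p_3(\delta)+p_4(\delta))$, so that conditionally on $E_1^c$ the number of indices realizing $F$ stochastically dominates a binomial, to which the standard Chernoff lower-tail bound applies; the factor-of-two slack between the mean $(1-\kappa)p_2(\delta)(p_3(\delta)+p_4(\delta))K/2$ and the threshold that one actually needs is exactly what your unconditional computation $\mathds{E}[\sum_i Y_i]\geq 2p(\delta)K$ identifies, and it is what produces the exponent $p(\delta)K/8$.
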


\begin{proof}
Recall that $E_1$ is the event defined in Proposition \ref{numb-Di}. From Proposition \ref{numb-Di} we can see that
\begin{equation}\label{E1c}
\mathds{P}[E_1]\leq \exp\left\{-(1-\kappa)^2K/2\right\}.
\end{equation}
In addition, by the independence of edges, it can be observed that $\{E_{i,2}\cap (E_{i,3}\cup E_{i,4})\}_{i\in [1,K]_{\mathds{Z}}}$ and $E_1^c$ are all independent.  Therefore, from Chernoff bound and \eqref{E1c} we get that
\begin{equation*}
\begin{aligned}
\mathds{P}[F^c]&\leq \mathds{P}[E_1]+\mathds{P}[E_1^c\cap F^c]\\
&\leq \exp\left\{-(1-\kappa)^2K/2\right\}+ \exp\left\{-p(\delta)K/8\right\}.
\end{aligned}
\end{equation*}
Combining this with Lemma \ref{lem-probA}, we have
\begin{equation*}
\begin{aligned}
\mathds{P}[A\cup F^c]&\leq \mathds{P}[A]+\mathds{P}[F^c]\leq c_1 KN^{-d}+ \exp\left\{-(1-\kappa)^2K/2\right\}+ \exp\left\{-p(\delta)K/8\right\},
\end{aligned}
\end{equation*}
which implies the desired result.
\end{proof}

\subsection{Proof of Theorem \ref{thm-continuity}}

Recall that we fix a sufficiently small $\varepsilon \in (0,1)$ and fix ${\bm x}\in \mathds{S}$.
Throughout this subsection, we assume that the event $A^c\cap F$ occurs. Denote by $i_1,\cdots,i_J$ the subscripts such that  events $E_{i_j,1}^c\cap E_{i_j,2}\cap (E_{i_j,3}\cup E_{i_j,4}),\ j\in [1,J]_{\mathds{Z}}$ occur. Then from Definition \ref{def-F} we can see that $J\geq p(\delta)K$. Let
$$
\mathcal{E}_J=\bigcup_{j=1}^J\bigcup_{k=1}^{s_{i_j,1}}\bigcup_{l=1}^{s_{i_j,2}} \mathcal{E}_{V^{(1)}_{i_j,k}\times V^{(2)}_{i_j,l}},
$$
and denote
\begin{equation*}
\eta_{j}=
\begin{cases}
1,\quad & E_{i_j,4}\ \text{occurs};\\
0,\quad & E_{i_j,3}\ \text{occurs}
\end{cases}
\quad \quad \text{for all }j\in [1,J]_{\mathds{Z}}.
\end{equation*}
We can see that
$$
\mathds{P}\left[\eta_j=0\ |A^c\cap F, \mathcal{E}\setminus\mathcal{E}_J\right]+\mathds{P}\left[\eta_j=1\ |A^c\cap F, \mathcal{E}\setminus\mathcal{E}_J\right]=1.
$$
We also define $\mathcal{H}=\{j\in [1,J]_{\mathds{Z}}: \eta_j=1\}$. In the following analysis, we condition on the event $A^c \cap F$, on the value of $J$ (recall the definition of $J$ from above) and on $\mathcal E\setminus \mathcal E_J$. Therefore, the only remaining randomness is on the value of $\{\eta_j: j\in J\}$. For fixed $a>0$, let $G_{a,\varepsilon}$ denote the event (i.e., a collection of realizations for $\{\eta_j: j\in J\}$) that $D({\bm 0},{\bm x})\in (a-\varepsilon, a+\varepsilon)$.

\begin{lemma}\label{G-sperner}
For fixed $\varepsilon \in (0,1)$, recall that $\{M_i\}_{i\in [1,K]_{\mathds{Z}}}$ is a sequence defined in \eqref{def-M}.
For all $d\geq 1$ and all $\beta>0$, there exists sufficiently small $\delta\in (0,1)$ {\rm(}depending only on $d$ and $\beta${\rm)} such that condition on $A^c\cap F$, on the value of $J$ and on $\mathcal{E}\setminus\mathcal{E}_J$, $G_{a,\varepsilon}$ is a Sperner family for each $a>0$.
\end{lemma}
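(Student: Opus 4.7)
The plan is to verify Definition~\ref{def-sperner} directly for $G_{a,\varepsilon}$, viewed as a collection of subsets $\mathcal{H}\subset[1,J]_{\mathds{Z}}$ via $\mathcal{H}=\{j:\eta_j=1\}$, by combining a quantitative one-flip sensitivity estimate for the map $\eta\mapsto D(\bm 0,\bm x)$ with a cardinality split on $|\mathcal{H}|$. The monotonicity of $D$ in the edge set (adding edges can only decrease distances) will be used throughout to turn single-flip bounds into bounds for arbitrary supersets or subsets.

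For the sensitivity estimate, I would fix $j_0\in[1,J]_{\mathds{Z}}$ and consider two configurations $\eta^0,\eta^1\in\{0,1\}^J$ that agree off $j_0$ with $\eta^0_{j_0}=0$ and $\eta^1_{j_0}=1$. Writing $i=i_{j_0}$ and $M=M_{i_{j_0}}$, the target is
\[
D(\bm 0,\bm x;\eta^0)-D(\bm 0,\bm x;\eta^1)\;\geq\;\tfrac{c_{*,1}}{2}M^\theta\;\geq\;2\varepsilon,
\]
the second inequality being immediate from $M\geq M_1=(4\varepsilon/c_{*,1})^{1/\theta}$. For the lower bound on $D(\bm 0,\bm x;\eta^0)$, the conditioning events $A^c\cap E_{i,1}^c$ forbid any long edge from crossing either barrier annulus $\mathbb{A}_{r_{i-1}+M/8,r_{i-1}}$ or $\mathbb{A}_{r_i,r_i-M/8}$, so every $D$-geodesic under $\eta^0$ from $\bm 0$ to $\bm x$ must enter the inner barrier at some $p_1\in V^{(1)}_{i,k_0}$ and exit the outer barrier at some $p_2\in V^{(2)}_{i,l_0}$, with its middle segment confined to the thick annulus $\mathbb{A}_{r_i,r_{i-1}}$; the event $E_{i,3}$ then forces this middle segment to have $D$-length at least $D_i\geq c_{*,1}M^\theta$, yielding
\[
D(\bm 0,\bm x;\eta^0)\;\geq\;D(\bm 0,p_1;\eta^0)+c_{*,1}M^\theta+D(p_2,\bm x;\eta^0).
\]
For the matching upper bound on $D(\bm 0,\bm x;\eta^1)$, the event $E_{i,4}$ supplies a direct long edge between $V^{(1)}_{i,k_0}$ and $V^{(2)}_{i,l_0}$, which under $\eta^1$ allows a crossing of $D$-length at most the sum of the two cube diameters, bounded by $2\delta^{\theta/2}M^\theta$ via $E_{i,2}$; concatenating with $\eta^1$-geodesics from $\bm 0$ to $V^{(1)}_{i,k_0}$ and from $V^{(2)}_{i,l_0}$ to $\bm x$ and using edge-monotonicity to dominate these two tails by the corresponding $\eta^0$-distances gives
\[
D(\bm 0,\bm x;\eta^1)\;\leq\;D(\bm 0,p_1;\eta^0)+2\delta^{\theta/2}M^\theta+D(p_2,\bm x;\eta^0).
\]
Subtracting these two displays and invoking $4\delta^{\theta/2}\leq c_{*,1}$ from \eqref{delta0} completes the sensitivity claim.

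With the sensitivity in hand, the Sperner check is a cardinality split. If $|\mathcal{H}|\leq J/2$, take $B_\mathcal{H}=[1,J]_{\mathds{Z}}\setminus\mathcal{H}$, which has size $\geq J/2$; for any $\mathcal{H}'\supset\mathcal{H}$ containing some $j_0\in B_\mathcal{H}$, edge-monotonicity gives $D(\bm 0,\bm x;\mathcal{H}')\leq D(\bm 0,\bm x;\mathcal{H}\cup\{j_0\})$, and the sensitivity estimate applied to the pair $(\mathcal{H},\mathcal{H}\cup\{j_0\})$ drops the right-hand side below $a-\varepsilon$, so $\mathcal{H}'\notin G_{a,\varepsilon}$; this realizes clause (1) of Definition~\ref{def-sperner}. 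Symmetrically, if $|\mathcal{H}|>J/2$, take $B'_\mathcal{H}=\mathcal{H}$; for any $\mathcal{H}'\subsetneq\mathcal{H}$ omitting some $j_0\in B'_\mathcal{H}$, the sensitivity estimate applied to $(\mathcal{H}\setminus\{j_0\},\mathcal{H})$ together with monotonicity push $D(\bm 0,\bm x;\mathcal{H}')$ above $a+\varepsilon$, realizing clause (2).

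The hard part will be making the geometric decomposition of the $\eta^0$-geodesic through $\mathbb{A}_{r_i,r_{i-1}}$ fully rigorous. Although $A^c\cap E_{i,1}^c$ rules out long-edge shortcuts across either barrier, an $\eta^0$-geodesic could a priori oscillate, re-entering $B_{r_{i-1}}$ or escaping past $B_{r_i}$ and returning through the barriers multiple times before reaching $\bm x$. One must argue that each such excursion itself pays at least the full barrier-to-barrier crossing cost, precisely because no long edge shortcuts either barrier, so that after peeling off all excursions the single-crossing lower bound $D_i\geq c_{*,1}M^\theta$ still factors cleanly out of the $\bm 0\to p_1$ and $p_2\to\bm x$ pieces. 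This peeling step is the technical heart of the argument and relies crucially on the locality properties of the LRP metric from \cite{DFH23+}.
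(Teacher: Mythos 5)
Your proposal is correct and follows essentially the same route as the paper: a single-flip sensitivity bound obtained by trading the annulus-crossing cost $c_{*,1}M_{i_{j'}}^\theta$ (from $E_{i_{j'},3}$) against the shortcut cost $2\delta^{\theta/2}M_{i_{j'}}^\theta$ (from $E_{i_{j'},4}$ together with $E_{i_{j'},2}$), with \eqref{delta0} and $M_{i_{j'}}\geq M_1$ yielding a gap of at least $2\varepsilon$, and then monotonicity of $D$ in the edge set plus the cardinality split to verify the two clauses of Definition~\ref{def-sperner}. The ``peeling'' issue you flag is present (and equally implicit) in the paper's own proof; note that for the lower bound one only needs to isolate a single clean crossing sub-path of $\mathbb{A}_{r_{i_{j'}},r_{i_{j'}-1}}$ from the inner to the outer sub-annulus --- guaranteed by $A^c\cap E_{i_{j'},1}^c$ --- rather than to account for every excursion, since the segments before and after that crossing are bounded below by the corresponding $D$-distances regardless of how they oscillate.
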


\begin{proof}
Fix the conditioning as in the lemma statement throughout the proof.
Let us denote $P$ as a $D$-geodesic from ${\bm 0}$ to ${\bm x}$. Assume that $H$ is a realization of $\mathcal{H}$ such that on the event $\mathcal{H}=H$, $G_{a,\varepsilon}$ occurs, i.e.,
\begin{equation}\label{lenP}
\text{len}(P;D)=D({\bm 0},{\bm x})\in (a-\varepsilon,a+\varepsilon).
\end{equation}

We now take $j'\in [1,J]_{\mathds{Z}}$ such that $j'\notin H$ and  define $H'=H\cup \{j'\}$.
We claim that on the event $\mathcal{H}=H'$,  it must hold that $D({\bm 0},{\bm x})\notin (a-\varepsilon,a+\varepsilon)$.
Indeed, on the event $\mathcal{H}=H'$, we can construct a new path $P'$ from ${\bm 0}$ to ${\bm x}$ as follows.
Let ${\bm x}_{j',1}$ and ${\bm x}_{j',2}$ be the points where the path $P$ enters into $\mathbb{A}_{r_{i_{j'}-1}+ M_{i_{j'}}/8, r_{i_{j'}-1}}$ and $\mathbb{A}_{r_{i_{j'}}, r_{i_{j'}}-M_{i_{j'}}/8}$, respectively.
Let $k',l'$ be indices such that  ${\bm x}_{j',1}\in V_{i_{j'},k'}^{(1)}$ and ${\bm x}_{j',2}\in V_{i_{j'},l'}^{(2)}$.
Since $j'\in \mathcal{H}=H'$, this implies that $E_{i_{j'},4}$ occurs, implying that there exists an edge
$$
\langle {\bm z}_1,{\bm z}_2 \rangle \in \mathcal{E}_{V_{i_{j'},k'}^{(1)}\times V_{i_{j'},l'}^{(2)}}.
$$
Therefore, we let $P'({\bm x}_{j',1},{\bm x}_{j',2})$ be the path starting at ${\bm x}_{j',1}$, which takes the shortest path (restricted to $V_{i_{j'},k'}^{(1)}$) to ${\bm z}_1$, then uses the edge $\langle {\bm z}_1,{\bm z}_2 \rangle$ to arrive ${\bm z}_2$, and finally takes the shortest path (restricted to $V_{i_{j'},l'}^{(2)}$) to ${\bm x}_{j',2}$. Then the path $P'$ is defined by replacing the part of path $P$ from  ${\bm x}_{j',1}$ to ${\bm x}_{j',2}$  with $P'({\bm x}_{j',1},{\bm x}_{j',2})$. From \eqref{delta0} and \eqref{def-M} we obtain that
\begin{equation*}
\begin{aligned}
\text{len}(P;D)-\text{len}(P';D)
&\geq c_{*,1} M_{i_{j'}}^\theta- \text{diam}(V_{i_{j'},k'}^{(1)};D)-\text{diam}(V_{i_{j'},l'}^{(2)};D)\\
&\geq c_{*,1} M_{i_{j'}}^\theta-2\delta^{\theta/2}M_{i_{j'}}^\theta
\geq \frac{1}{2}c_{*,1} M_{i_{j'}}^\theta\geq \frac{1}{2}c_{*,1} M_1^\theta= 2\varepsilon.
\end{aligned}
\end{equation*}
Combining this with \eqref{lenP}, we get that $\text{len}(P';D)<a-\varepsilon$, which implies
\begin{equation*}
D({\bm 0}, {\bm x})\leq \text{len}(P';D)\notin (a-\varepsilon, a+\varepsilon).
\end{equation*}
Hence, the claim holds.
Using similar arguments, we can also show that for any $j'\in H$, on the event $\mathcal{H}=H\setminus \{j'\}$ it must hold that $D(\bm 0, \bm x)\notin(a-\varepsilon,a+\varepsilon)$.
From this and the definition of Sperner family in Definition \ref{def-sperner}, we get that condition on $A^c\cap F$, on the value of $J$ and $\mathcal{E}\setminus\mathcal{E}_J$, $G_{a,\varepsilon}$ is a Sperner family for each $a>0$.
\end{proof}

In the rest of this section, we present the

\begin{proof}[Proof of Theorem \ref{thm-continuity}]
Fix a sufficiently small $\varepsilon\in (0,1)$  and $a>0$.
Recall that $K$, $\{M_i\}_{i\in [1,K]_{\mathds{Z}}}$ and $\delta$ are the constants defined in \eqref{def-KN}, \eqref{def-M} and \eqref{delta0}, respectively.

Condition on $A^c\cap F$, on the value of $J$ and on $\mathcal{E}\setminus\mathcal{E}_J$, we have $\{\eta_j:j\in[1,J]\}$ is a series of i.i.d.\ Bernoulli variables with parameter $p_\eta:=\mathds{P}[E_{i_j,4}|E_{i_j,4}\cup E_{i_j,3}]$ from the independence between $\{E_{i,2}\}_{i\in [1,K]_{\mathds{Z}}},\{E_{i,3}, E_{i,4}\}_{i\in [1,K]_{\mathds{Z}}}$ and $\{A^c,E_{i,1}\}_{i\in[1,K]_\mathds{Z}}$. Additionally, note that from Lemma~\ref{p3p4}, we have for each $j\in[1,J]_{\mathds{Z}}$,
\begin{equation*}
	p_\eta\in\left[\mathbb{P}[E_{i_j,4}],1-\mathbb{P}[E_{i_j,3}]\right]\subset[p_4(\delta),1-p_3(\delta)]. 
\end{equation*}
Thus combining this with Lemma \ref{G-sperner} and Proposition \ref{prop-sperner}, we have
\begin{equation*}
\mathds{P}[G_{a,\varepsilon}\ |A^c\cap F, \mathcal{E}\setminus\mathcal{E}_J]\leq \frac{C_1}{\sqrt{p(\delta)K}}
\end{equation*}
for some constant $C_1=C_1(d,\beta)>0$ depending only on $d$ and $\beta$. This implies that $\mathds{P}[G_{a,\varepsilon}\ |A^c\cap F]\leq \frac{C_1}{\sqrt{p(\delta)K}}$.
Combining this with Lemma \ref{prob-AF}, we have
\begin{equation*}\label{PG-sperner}
\begin{aligned}
\mathds{P}[G_{a,\varepsilon}]&\leq \mathds{P}[A\cup F^c]+\mathds{P}[G_{a,\varepsilon}\ |A^c\cap F]\\
&\leq c_1KN^{-d}+ \exp\left\{-(1-\kappa)^2K/2\right\}+ \exp\left\{-p(\delta)K/8\right\}+\frac{C_1}{\sqrt{p(\delta)K}},
\end{aligned}
\end{equation*}
where $c_1, \kappa,\ p(\delta)$ are constants (both depending only on $d$ and $\beta$) in Lemma \ref{lem-probA}, Proposition \ref{numb-Di} and \eqref{pdelta}, respectively. Furthermore, by \eqref{def-KN}, we obtain that
$$
\mathds{P}[G_{a,\varepsilon}]\leq \widetilde{c}_1 (\log(1/\varepsilon))^{-1/4}
$$
for some constant $\widetilde{c}_1=\widetilde{c}_1(d,\beta)$ depending only on $d$ and $\beta$. Hence, we have $\lim_{\varepsilon\to 0^+}\mathds{P}[G_{a,\varepsilon}]=0$ for all $a>0$.
\end{proof}

\section{Proof of Theorem \ref{thm-unique}}\label{sect-T2}

In this section, we fix two distinct points $\bm x,\bm y\in \mathds{R}^d$.  To prove Theorem \ref{thm-unique}, we start with some notations.
For any $q\in \mathds{Q}_+$, denote
$$
\mathcal{E}_{\geq q}=\{\langle \bm u,\bm v\rangle\in\mathcal{E}:|\bm u-\bm v|\geq q\}.
$$
In the following, we fix a sufficiently small $q\in \mathds{Q}_+$ and condition on the edge set $\mathcal{E}_{\geq q}$.
We select a long edge $\langle \bm u,\bm v\rangle\in \mathcal{E}_{\geq q}$ that satisfies $\bm u\neq \bm x,\bm y$ and $\bm v\neq \bm x,\bm y$.
Additionally, we choose a sufficiently small $r\in (0,q)\cap \mathds{Q}$ such that
\begin{itemize}

\item[(R1)]$\bm x,\bm y\notin B_r(\bm v)$;
\medskip

\item[(R2)] $B_r(\bm v)$ contains no other endpoints of edges in $\mathcal{E}_{\geq q}$ except point $\bm v$.
\end{itemize}
We also assume that $(\mathcal{E}\setminus\mathcal{E}_{\geq q})\cap (B_r(\bm v)^c\times \mathds{R}^d)$ is given. Note that, by the above assumption, the edge set
\begin{equation}\label{given-es}
\mathcal{E}({\bm v}):=\mathcal{E}_{\geq q}\bigcup\left((\mathcal{E}\setminus\mathcal{E}_{\geq q})\bigcap (B_r(\bm v)^c\times \mathds{R}^d)\right)
\end{equation}
is given.
For convenience, denote
\begin{equation*}\label{def-Z}
Z_r(\bm v)=\left\{\bm z\in B_r(\bm v)\setminus\{\bm v\}: \exists \bm w\in B_r(\bm v)^c \text{ such that }\langle \bm z,\bm w\rangle\in \mathcal{E}\right\} \cup(\partial B_r(\bm v)\cap \mathds{Q}).
\end{equation*}
It is clear that $Z_r(\bm v)\subset \mathds{R}^d$ is a countable set by the property of Poisson point process. Moreover, since the edge set in \eqref{given-es} is given, $Z_r(\bm v)$ is a deterministic set.

\begin{definition}\label{def-Quvz}
For any $\bm z\in Z_r(\bm v)$, we will construct a path $Q_{\bm u\bm v\bm z}(\bm x,\bm y)$ from $\bm x$ to $\bm y$ as outlined below. Starting at $\bm x$, $Q_{\bm u\bm v\bm z}(\bm x,\bm y)$ proceeds as follows: it first follows a $D$-geodesic from $\bm x$ to $\bm u$ restricted to $B_r(\bm v)^c$, then traverses the long edge $\langle \bm u,\bm v\rangle$ to reach $\bm v$, and next it follows a $D$-geodesic from $\bm v$ to $\bm z$ restricted to $B_r(\bm v)$, 
and finally it uses a $D$-geodesic from $\bm z$ to $\bm y$ restricted to $B_r(\bm v)^c$. See Figure \ref{fig-Quvz} for an illustration. It is worth emphasizing that we will choose oly one geodesic in each step in a deterministic way and as a result there is only one $Q_{\bm u \bm v \bm z}(\bm x,\bm y)$ for any $\bm z\in Z_r(\bm v)$.
\end{definition}

\begin{figure}\label{fig-Quvz}
	\centering
	\includegraphics{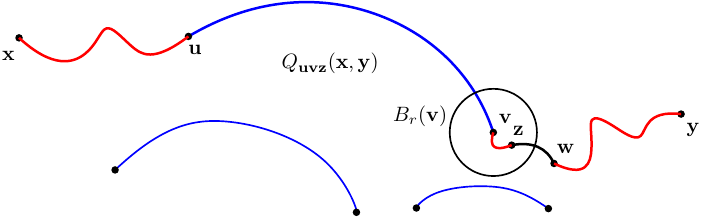}
	\caption{Illustration for Definition \ref{def-Quvz}. The blue lines represent the edges in $\mathcal{E}_{\geq q}$ including $\langle \bm u, \bm v\rangle$. The black line represents the edge $\langle \bm z,\bm w\rangle$ with $\bm z\in Z_r(\bm v)$. The red lines represent three $D$-geodesics taken in Definition \ref{def-Quvz}.}
\label{fig-Quvz}
\end{figure}

\begin{lemma}\label{prob-z}
For fixed sufficiently small $q\in \mathds{Q}_+$, assume that the edge set in \eqref{given-es} is given. Then for any $\langle \bm u,\bm v\rangle\in \mathcal{E}_{\geq q}$ with $\bm u\neq \bm x,\bm y$ and $\bm v\neq \bm x,\bm y$, and any sufficiently small $r\in (0,q)\cap \mathds{Q}$ satisfying {\rm(R1)} and {\rm(R2)}, the following holds. For any $\bm z\in Z_r(\bm v)$,  we have that almost surely,  ${\rm len}(Q_{\bm u\bm v\bm z}(\bm x,\bm y);D)\neq D(\bm x,\bm y;B_r(\bm v)^c)$.
\end{lemma}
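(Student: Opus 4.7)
The plan is to condition on $\mathcal{E}(\bm v)$ so that the equality in question reduces to the internal $D$-distance $D(\bm v,\bm z;B_r(\bm v))$ taking a prescribed real value, and then to argue that this random variable has no atoms.

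First I would decompose the length of $Q_{\bm u\bm v\bm z}(\bm x,\bm y)$. Since a single edge carries zero length in the scaling-limit metric $D$, one obtains
\begin{equation*}
\text{len}(Q_{\bm u\bm v\bm z}(\bm x,\bm y);D)=D(\bm x,\bm u;B_r(\bm v)^c)+D(\bm v,\bm z;B_r(\bm v))+D(\bm z,\bm y;B_r(\bm v)^c).
\end{equation*}
Each of $D(\bm x,\bm u;B_r(\bm v)^c)$, $D(\bm z,\bm y;B_r(\bm v)^c)$, and $D(\bm x,\bm y;B_r(\bm v)^c)$ is a function only of edges of $\mathcal{E}$ with at least one endpoint in $B_r(\bm v)^c$ (using, when $\bm z$ is interior, the boundary-crossing edge at $\bm z$ guaranteed by the definition of $Z_r(\bm v)$), and hence is $\mathcal{E}(\bm v)$-measurable. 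Setting
\begin{equation*}
c:=D(\bm x,\bm y;B_r(\bm v)^c)-D(\bm x,\bm u;B_r(\bm v)^c)-D(\bm z,\bm y;B_r(\bm v)^c),
\end{equation*}
the event in question becomes $\{D(\bm v,\bm z;B_r(\bm v))=c\}$ with $c$ deterministic once $\mathcal{E}(\bm v)$ is revealed.

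Next I would exploit condition (R2): no edge of $\mathcal{E}_{\geq q}$ has both endpoints in $B_r(\bm v)$, so the edges with both endpoints in $B_r(\bm v)$ are all short edges of length less than $q$, which are absent from $\mathcal{E}(\bm v)$ and, by the independence of edges in the LRP model, are independent of it. By the locality of the $\beta$-LRP metric established in \cite{DFH23+}, $D(\cdot,\cdot;B_r(\bm v))$ is a function of those edges alone; consequently $D(\bm v,\bm z;B_r(\bm v))$ is independent of $\mathcal{E}(\bm v)$, and the lemma reduces to the distributional continuity
\begin{equation*}
\mathds{P}\!\left[D(\bm v,\bm z;B_r(\bm v))=c\right]=0\quad\text{for every }c\in\mathds{R}.
\end{equation*}

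The main obstacle is that Theorem~\ref{thm-continuity} is stated only for the unrestricted metric $D(\bm 0,\bm x)$. By translation and scale covariance of the $\beta$-LRP metric (see \cite{DFH23+}), however, $D(\bm v,\bm z;B_r(\bm v))$ is distributed as $r^{\theta}D(\bm 0,(\bm z-\bm v)/r;B_1(\bm 0))$, and I expect the proof of Theorem~\ref{thm-continuity} in Section~\ref{sect-T1} to transfer essentially verbatim to this internal setting: by \eqref{condit-Mi} the nested annuli $\mathbb{A}_{r_i,r_{i-1}}$ already sit inside $B_1(\bm 0)$; the Firework/long-edge sparsity input depends only on Poissonian edge data inside $B_1(\bm 0)$; and every edge-toggling shortcut built in the Sperner argument is supported in one of those annuli, hence remains an admissible modification when the $D$-geodesic is constrained to $B_1(\bm 0)$. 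Applying this continuity for each fixed $\bm z$ and taking a union bound over the countable set $Z_r(\bm v)$ then concludes the proof of the lemma.
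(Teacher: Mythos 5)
Your proposal is correct and follows essentially the same route as the paper: condition on $\mathcal{E}(\bm v)$ so that the portions of $Q_{\bm u\bm v\bm z}(\bm x,\bm y)$ outside $B_r(\bm v)$ and the quantity $D(\bm x,\bm y;B_r(\bm v)^c)$ become deterministic, reduce the event to the internal distance inside $B_r(\bm v)$ hitting a fixed value, and rule out atoms by rerunning the proof of Theorem \ref{thm-continuity} with $\bm 0$, $\bm x$, $M_i$ replaced by $\bm v$, $\bm z$, $|\bm v-\bm z|M_i$. Your extra remarks on why the Sperner argument transfers to the restricted metric, and the union bound over the countable set $Z_r(\bm v)$, are consistent with (and in the latter case deferred to the proof of Theorem \ref{thm-unique} in) the paper.
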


\begin{proof}
For convenience, denote $a=D(\bm x,\bm y;B_r(\bm v)^c)$ and $b=\text{len}(Q_{\bm u\bm v\bm z}(\bm x,\bm y)\cap B_r(\bm v)^c;D)$. It is worth emphasizing that, since the edge set $\mathcal{E}(\bm v)$ defined in \eqref{given-es} is given, $a$ and $b$ are deterministic constants.  Therefore, it suffices to prove that
\begin{equation}\label{prob-Q}
\mathds{P}\left[\text{len}(Q_{\bm u\bm v\bm z}(\bm x,\bm y)\cap B_r(\bm v);D)=a-b\ |\mathcal{E}(\bm v)\right]=0.
\end{equation}
It is obvious that \eqref{prob-Q} holds trivially when $b\geq a$. In the rest of the proof, we assume that $b<a$. We now turn to show that
\begin{equation}\label{prob-Q2}
\lim_{\varepsilon\to 0^+}\mathds{P}\left[\text{len}(Q_{\bm u\bm v\bm z}(\bm x,\bm y)\cap B_r (\bm v);D)\in (a-b-\varepsilon,a-b+\varepsilon)\ |\mathcal{E}(\bm v)\right]=0,
\end{equation}
which implies \eqref{prob-Q} directly.  Indeed, according to the proof of Theorem \ref{thm-continuity} by replacing $\bm 0$, $\bm x$ and  $M_i$ with $\bm v$, $\bm z$ and $|\bm v-\bm z|M_i$, respectively, we can obtain \eqref{prob-Q2} immediately.
\end{proof}

We now turn to the

\begin{proof}[Proof of Theorem \ref{thm-unique}]
Let $\bm x$ and $\bm y$ be two distinct points in $\mathds{R}^d$.
Fix $q\in \mathds{Q}_+$ and assume that $\mathcal{E}_{\geq q}$ is given. For convenience, we let
$$
\mathds{Q}_q=\left\{r\in (0,q)\cap \mathds{Q}:\ r\ \text{satisfies (R1) and (R2)}\right\}.
$$
Now for any $r\in \mathds{Q}_q$, $\langle \bm u,\bm v \rangle\in \mathcal{E}_{\geq q}$ and any $\bm z\in Z_r(\bm v)$, recall that the path $Q_{\bm u \bm v\bm z}(\bm x,\bm y)$ is defined in Definition \ref{def-Quvz}.
Denote
\begin{equation}\label{def-Puv}
\mathcal{P}_{\bm u\bm v}=\left\{Q_{\bm u \bm v\bm z}(\bm x,\bm y):\ r\in \mathds{Q}_q,\ z\in Z_r(\bm v)\right\}.
\end{equation}
Then according to Lemma \ref{prob-z} and the fact that $\mathds{Q}_q$ and $Z_r(\bm v)$ for $r\in \mathds{Q}_q$ are all countable sets, we can obtain that condition on $\mathcal{E}(\bm v)$, almost surely
\begin{equation}\label{prob-r}
\text{len}(Q;D)\neq D(\bm x,\bm y;B_r(\bm v)^c)\quad \text{for all }Q\in \mathcal{P}_{\bm u\bm v}.
\end{equation}
Furthermore, since there are at most countable edges in $\mathcal{E}$ and $q\in \mathds{Q}_+$, from \eqref{prob-r} we arrive at
\begin{equation}\label{lenQ=D}
\mathds{P}\left[\text{len}(Q;D)=D(\bm x,\bm y;\mathcal{E}\backslash \{\langle \bm u,\bm v\rangle\})\ \text{for some }\langle \bm u,\bm v\rangle\in \mathcal{E}\ \text{and some }Q\in \mathcal{P}_{\bm u\bm v}\right]=0.
\end{equation}

Additionally, note that if there are two distinct $D$-geodesics from $\bm x$ to $\bm y$, Denote by $P_1$ and $P_2$, then there must exist an edge $\langle \bm u,\bm v\rangle \in \mathcal{E}$ that is traversed by $P_1$ and not by $P_2$.
Combining this observation with \eqref{def-Puv} and Definition \ref{def-Quvz}, we can conclude that the event \{there are multiple $D$-geodesics from $\bm x$ to $\bm y$\} implies the event in \eqref{lenQ=D}. Hence,
we obtain the desire result from this fact and \eqref{lenQ=D}.
\end{proof}

\section{Proof of Theorem \ref{mr-dim}}\label{sect-dim}

In this section,  we present the proof of Theorem~\ref{mr-dim}. We will establish the lower bound for the Hausdorff dimension of $D$-geodesics in  Section~\ref{sect-dim-low}, followed by the upper bound in Section~\ref{sect-dim-up}. To this end, for any metric $\widetilde{D}$, denote by $\text{diam}(\cdot;\widetilde{D})$ the diameter of $\widetilde{D}$.

\subsection{Lower bound}\label{sect-dim-low}
The main tool in proving the lower bound for the Hausdorff dimension of $D$-geodesics  is the following mass distribution principle.
\begin{lemma}[{\cite[Theorem~4.19]{Morters_Peres_2010}}]\label{mass}
Recall \eqref{hausdorff}. Suppose that $(E,\widehat{D})$ is a metric space and $\Delta\geq 0$. If there exists a mass distribution $\zeta$ on $E$ along with constants $C <\infty$ and $\delta > 0$ such that
\begin{equation*}
	\zeta(V)\leq C\left({\rm diam}(V;\widehat{D})\right)^\Delta,
\end{equation*}
for all closed sets $V$ with ${\rm diam}(V;\widehat{D})\leq \delta$, then
\begin{equation*}
	C_\Delta(E,\widehat{D})\geq C^{-1}\zeta(E)>0
\end{equation*}
and hence $\mathrm{dim}_{\mathrm{H}}(E;\widehat{D})\geq \Delta$.
\end{lemma}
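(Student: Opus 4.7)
The plan is to verify the mass distribution principle directly from the definitions. The core inequality will follow from countable subadditivity of the mass distribution $\zeta$ applied to an arbitrary covering of $E$ by $\widehat{D}$-balls, combined with the hypothesis bounding $\zeta$ on small sets; the Hausdorff-dimension conclusion will then follow from an elementary monotonicity property of the content in $\Delta$.

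First I would fix an arbitrary cover $\{B(x_j, r_j)\}_{j \geq 1}$ of $E$ by closed $\widehat{D}$-balls. Each such ball has $\widehat{D}$-diameter at most $2r_j$, so for indices with $2r_j \leq \delta$ the hypothesis applies and gives $\zeta(B(x_j,r_j)) \leq C (2r_j)^\Delta$. I would assume (without loss of generality) that every radius in the cover satisfies $2r_j \leq \delta$ — a reduction justified by the observation that any cover containing a ball with $2r_j > \delta$ already satisfies $\sum_k r_k^\Delta \geq (\delta/2)^\Delta$, so the infimum in \eqref{hausdorff} is realised, up to arbitrarily small additive error, by covers with all radii small. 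Under this reduction, countable subadditivity of $\zeta$ yields
$$\zeta(E) \leq \sum_{j=1}^\infty \zeta(B(x_j, r_j)) \leq 2^\Delta C \sum_{j=1}^\infty r_j^\Delta,$$
and rearranging and taking the infimum gives $C_\Delta(E,\widehat{D}) \geq (2^\Delta C)^{-1} \zeta(E)$. After absorbing the factor $2^\Delta$ into the constant $C$, this becomes the stated bound $C_\Delta(E,\widehat{D}) \geq C^{-1} \zeta(E)$, which is strictly positive because $\zeta$ is a (non-trivial) mass distribution.

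For the final Hausdorff-dimension bound, I would argue by contradiction: suppose $\mathrm{dim}_{\mathrm{H}}(E;\widehat{D}) < \Delta$, so that there exists $\Delta' < \Delta$ with $C_{\Delta'}(E,\widehat{D}) = 0$. For each $\varepsilon > 0$ one could then choose a cover with $\sum_j r_j^{\Delta'} < \varepsilon$; then each $r_j < \varepsilon^{1/\Delta'}$, so for $\varepsilon$ small enough all $r_j \leq 1$, giving $r_j^\Delta \leq r_j^{\Delta'}$ and hence $\sum_j r_j^\Delta < \varepsilon$. This forces $C_\Delta(E,\widehat{D}) = 0$, contradicting the positivity just established. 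Therefore $\mathrm{dim}_{\mathrm{H}}(E;\widehat{D}) \geq \Delta$.

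The main obstacle is purely book-keeping: one must handle the multiplicative constant $2^\Delta$ arising from the diameter-versus-radius discrepancy, and justify the reduction to covers with small radii. Both are handled by the elementary observations above, and the argument is otherwise a transparent application of countable subadditivity combined with the bound assumed in the hypothesis.
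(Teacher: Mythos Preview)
The paper does not prove this lemma at all: it is stated as a citation to \cite[Theorem~4.19]{Morters_Peres_2010} and used as a black box in Section~\ref{sect-dim-low}. So there is no ``paper's own proof'' to compare against.

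Your argument is the standard one and is essentially correct for the conclusion that matters (namely $\mathrm{dim}_{\mathrm{H}}(E;\widehat D)\geq\Delta$). Two small book-keeping remarks. First, the reduction to covers with all radii small is slightly mis-stated: it is not that the infimum is always realised by such covers, but rather that for \emph{any} cover either some $r_j>\delta/2$ (whence $\sum_j r_j^\Delta\geq(\delta/2)^\Delta$) or all $r_j\leq\delta/2$ (whence your subadditivity bound applies); in both cases $\sum_j r_j^\Delta$ is bounded below by a positive constant, which is all that is needed. Second, with the paper's definition \eqref{hausdorff} of the content via \emph{radii} while the hypothesis controls $\zeta$ via \emph{diameters}, the bound you actually obtain is $C_\Delta(E,\widehat D)\geq \min\bigl((\delta/2)^\Delta,\,(2^\Delta C)^{-1}\zeta(E)\bigr)$ rather than the exact $C^{-1}\zeta(E)$ displayed in the lemma; this discrepancy is harmless for the Hausdorff-dimension conclusion and for the paper's application.
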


We also require the following lemma to characterize the uniform continuity of the metric $D$, which can be regarded as a corollary of \cite[Proposition~1.13]{DFH23+}. To introduce this, for each $L>0,\ \varepsilon>0$ and $k\in\mathds{N}$, denote $F_{k,L,\varepsilon}$ as the event that $D(\bm u,\bm v)\leq 2^{-(\theta-\varepsilon)k}$ for all ${\bm u},{\bm v}\in[-L,L]^d$ with $\|{\bm u}-{\bm v}\|_{\ell^\infty}\leq 2^{-k}$, where $\theta\in (0,1)$ is the exponent defined in \eqref{def-theta}.
\begin{lemma}\label{F-low}
For all $d\geq 1$, $\beta>0$ and $L>0$, there exist constants $c_1, c_2>0$ {\rm(}depending on $d, \beta$ and $L${\rm)} such that for each $k\in\mathds{N}$ and $\varepsilon>0$,
\begin{equation*}
\mathds{P}[F_{k,L,\varepsilon}^c]\leq c_1\exp\left\{-c_2 2^{\varepsilon k}/k\right\}.
\end{equation*}
Furthermore, for each fixed $L>0$ and $\varepsilon>0$, almost surely, there exists a {\rm(}random{\rm)} $k_0\in\mathds{N}$ such that $\cap_{k>k_0}F_{k,L,\varepsilon}$ occurs. 
\end{lemma}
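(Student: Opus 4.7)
The plan is to reduce the lemma to a tail bound on the $D$-diameter of a single small cube and then apply a union bound followed by Borel--Cantelli. First I would cover $[-L,L]^d$ by the grid of cubes $V_{2^{-k}}(\bm z)$ with $\bm z \in 2^{-k}\mathds{Z}^d \cap [-2L,2L]^d$, which contains $O(2^{dk})$ cubes. Any two points $\bm u,\bm v\in[-L,L]^d$ with $\|\bm u-\bm v\|_\infty\leq 2^{-k}$ lie in a common cube of the doubled grid $\{V_{2\cdot 2^{-k}}(\bm z): \bm z\in 2^{-k}\mathds{Z}^d\cap[-2L,2L]^d\}$. Consequently, if every such enlarged cube $V$ satisfies $\text{diam}(V;D)\leq 2^{-(\theta-\varepsilon)k}$, then $F_{k,L,\varepsilon}$ holds; hence $F_{k,L,\varepsilon}^c$ is contained in the event that at least one cube in this grid has $D$-diameter exceeding $2^{-(\theta-\varepsilon)k}$.

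Next I would use the scale covariance axiom (Axiom IV of DFH23+): for any $r>0$, $\text{diam}(V_r(\bm z);D)\stackrel{d}{=} r^\theta\,\text{diam}(V_1(\bm 0);D)$. Applied with $r=2\cdot 2^{-k}$, this identifies
\[
\mathds{P}\!\left[\text{diam}(V_{2\cdot 2^{-k}}(\bm z);D)>2^{-(\theta-\varepsilon)k}\right]=\mathds{P}\!\left[\text{diam}(V_1(\bm 0);D)> c\cdot 2^{\varepsilon k}\right]
\]
for a constant $c$ depending only on $\theta$. The key input is then the upper-tail bound on $\text{diam}(V_1(\bm 0);D)$ supplied by Proposition~1.13 of DFH23+, which I expect to give a bound of the form $\mathds{P}[\text{diam}(V_1;D)>t]\leq c_1\exp(-c_2 t/\log t)$ for large $t$. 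Plugging in $t\asymp 2^{\varepsilon k}$ yields probability at most $c_1\exp(-c_2' 2^{\varepsilon k}/k)$ for a single cube.

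A union bound over the $O(2^{dk})$ cubes then gives
\[
\mathds{P}[F_{k,L,\varepsilon}^c]\leq c_3(L)\,2^{dk}\exp\!\left(-c_2' 2^{\varepsilon k}/k\right)\leq c_1\exp\!\left(-c_2 2^{\varepsilon k}/k\right),
\]
since the polynomial factor $2^{dk}$ is negligible compared to $\exp(c_2' 2^{\varepsilon k}/k)$ for large $k$, and small-$k$ cases can be absorbed into the constants. Finally, the doubly-exponential decay $\sum_k \exp(-c_2 2^{\varepsilon k}/k)<\infty$ together with Borel--Cantelli yields the almost sure existence of $k_0$ such that $\cap_{k>k_0} F_{k,L,\varepsilon}$ holds.

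The main obstacle is matching the precise form of the single-cube tail bound from DFH23+ Proposition~1.13 to the stated rate $\exp(-c_2 2^{\varepsilon k}/k)$; once that tail is in the form $\exp(-c t/\log t)$ (or anything stretched-exponential with a $\log$-correction), the remaining argument is the elementary combination of scale covariance, union bound, and Borel--Cantelli sketched above. If the cited proposition provides the bound only implicitly, one would extract it by scaling and applying the $\text{diam}$ tightness together with a chaining argument along a dyadic decomposition of the unit cube.
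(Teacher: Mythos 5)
Your argument is correct in outline but takes a different route from the paper. The paper does not cover $[-L,L]^d$ by small cubes at all: it quotes \cite[Proposition~1.13]{DFH23+}, which is already a \emph{uniform} modulus-of-continuity bound, namely $\mathds{P}\bigl[\max_{\bm u,\bm v\in[-L,L]^d} D(\bm u,\bm v)\big/\bigl(\|\bm u-\bm v\|_{\ell^\infty}^\theta\log(4L/\|\bm u-\bm v\|_{\ell^\infty})\bigr)>M\bigr]\le \widetilde c_1 e^{-\widetilde c_2 M}$, and observes that $F_{k,L,\varepsilon}^c$ forces this maximum to exceed $\widetilde c_3\,2^{\varepsilon k}/k$ (the $1/k$ coming from the $\log(4L/t)$ factor at $t=2^{-k}$); the tail bound then drops out in one line, and Borel--Cantelli finishes as you say. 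Your covering-plus-union-bound route also works, but note two points. First, you misattribute the input: Proposition~1.13 is the modulus-of-continuity statement, not a single-cube diameter tail; the correct input for your route is the tightness axiom (Axiom V of \cite{DFH23+}), used elsewhere in this paper, which gives $\mathds{E}[\exp\{\mathrm{diam}(V;D)/r^\theta\}]\le C$ for a cube of side $r$ and hence the even stronger single-cube bound $\exp(-c\,2^{\varepsilon k})$ with no $\log$ correction, after which the $2^{dk}$ union-bound factor is harmless. Second, a pair with $\|\bm u-\bm v\|_{\ell^\infty}\le 2^{-k}$ need not lie in a common cube of side $2\cdot 2^{-k}$ of your grid (the worst case requires side $3\cdot 2^{-k}$); this is a trivial constant to fix. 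In short, your approach essentially re-derives by hand the chaining that Proposition~1.13 already packages; the paper's proof is shorter because it uses that packaged estimate directly, while yours has the mild advantage of needing only the diameter tightness axiom rather than the continuity proposition.
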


\begin{proof}
	From \cite[Proposition~1.13]{DFH23+}, there exist constants $\widetilde{c}_1, \widetilde{c}_2>0$ (depending only on $d$ and $\beta$) such that for each $L>0, M>0$ and $k\in\mathds{N}$,
	\begin{equation}\label{cont}
		\mathds{P}\left[\max_{{\bm u},{\bm v}\in[-L,L]^d}\frac{D({\bm u},{\bm v})}{\|{\bm u}-{\bm v}\|_{\ell^\infty}^\theta \log(\frac{4L}{\|{\bm u}-{\bm v}\|_{\ell^\infty}})}>M\right]\leq \widetilde{c}_1\exp\{-\widetilde{c}_2 M\}.
	\end{equation}

We now consider the event $F_{k,L,\varepsilon}^c$. Note that if $F_{k,L,\varepsilon}^c$ occurs, then there is a pair of ${\bm u},{\bm v}\in[-L,L]^d$ such that $\|{\bm u}-{\bm v}\|_{\ell^\infty}\leq 2^{-k}$ and $D({\bm u},{\bm v})>2^{-(\theta-\varepsilon)k}$. Therefore, for this pair of $({\bm u},{\bm v})$ we have
	\begin{equation*}\label{discont}
		\frac{D({\bm u},{\bm v})}{\|{\bm u}-{\bm v}\|_{\ell^\infty}^\theta\log(\frac{4L}{\|{\bm u}-{\bm v}\|_{\ell^\infty}})}>\frac{2^{-\theta k+\varepsilon k}}{\max_{0<t\leq 2^{-k}}t^\theta \log(4L/t)}\geq \widetilde{c}_3 2^{\varepsilon k}/k
	\end{equation*}
	for some $\widetilde{c}_3>0$ depending only on $d,\beta$ and $L$. This implies that the event on the LHS of \eqref{cont} occurs with $M=\widetilde{c}_3 2^{\varepsilon k}/k$. Thus we get that
	\begin{equation*}
		\mathds{P}[F_{k,L,\varepsilon}^c]\leq \widetilde{c}_1\exp\left\{-\widetilde{c}_2\widetilde{c}_3 2^{\varepsilon k}/k\right\},
	\end{equation*}
which implies the first statement. Furthermore, we can obtain the second statement by Borel-Cantelli Lemma.
\end{proof}

With Lemma~\ref{F-low} at hand, we now give the
\begin{proof}[Proof of lower bound in Theorem~\ref{mr-dim}]
	For fixed $L>0$ and $\varepsilon>0$, from Lemma~\ref{F-low}, it follows that almost surely, there exists a (random) $k_0\in \mathds{N}$ such that the event $\cap_{k\geq k_0} F_{k,L,\varepsilon}$ occurs.

	For each $D$-geodesic $P$ whose range is contained in $[-L,L]^d$, we use the $D$-distance to parametrize $P$, that is, $P: [0,D(\bm x, \bm y)]\to [-L,L]^d$.
It is evident that $D(P(s),P(t))=|s-t|$ for all $s,t\in[0,D(\bm x, \bm y)]$.

	Now we define a mass distribution $\zeta_P$ of $P$ as follows. For each $A\subset \mathrm{Range}(P)$, let
	\begin{equation*}
		\zeta_P(A)=\mathrm{Leb}\left(\{t\in[0,D(\bm x, \bm y)]:P(t)\in A\}\right).
	\end{equation*}
	We claim that almost surely, for each $D$-geodesic $P\subset [-L,L]^d$ and each closed subset $V\subset P$ with $|V|<2^{-k_0}$, we have
	$$\zeta_P(V)\leq 2^{\theta-\varepsilon}|V|^{\theta-\varepsilon}.$$
	Indeed, we will prove a slightly strong version: almost surely, for each $D$-geodesic $P\subset[-L,L]^d$ and each closed set $V\subset P$ with $|V|<2^{-k}$ for some $k\geq k_0$, we have
	$$\zeta_P(V)\leq 2^{-(\theta-\varepsilon)k}.$$
To prove this claim, we proceed by an contradiction.
Assume that $\zeta_P(V)>2^{-(\theta-\varepsilon)k}$ for some $V\subset P$ with $|V|\leq 2^{-k}$ and $k\geq k_0$.
Then we can see that there exist $s,t\in[0,\mathrm{len}(P)]$ such that $|s-t|\geq 2^{-(\theta-\varepsilon)k}$, which implies that $D(P(s),P(t))\geq 2^{-(\theta-\varepsilon)k}$.
However, since  $P(s),P(t)\in V$ and $|V|\leq 2^{-k}$, we have $\|P(s)-P(t)\|_{\ell^\infty}\leq 2^{-k}$. Thus, we conclude that $F_{k,L,\varepsilon}^c$ must occur for some $k\geq k_0$.
This contradicts our earlier statement in the first paragraph, leading to the desired claim.

From the claim and Lemma~\ref{mass} we get that almost surely for each $P\subset[-L,L]^d$ (here for each geodesic $P$, we apply Lemma \ref{mass} with $E = P$ and $\widehat D$ being Euclidean distance),
$$\mathrm{dim}_{\mathrm{H}}(\mathrm{Range}(P))\geq \theta-\varepsilon.$$
taking $\varepsilon\to 0$ and then allowing $L\to\infty$, we arrive at the desired result.
\end{proof}

\subsection{Upper bound}\label{sect-dim-up}

The proof of the upper bound in Theorem~\ref{mr-dim} primarily relies on a renormalization argument that is similar to the approach used in \cite[Section~3]{DFH23+}.
To this end, we begin with some notations which will be used repeatedly. For any $r>0$ and $\bm k\in \mathds{Z}^d$, let $V_r(\bm k)$ denote the cube in $\mathds{R}^d$ centered at $\bm k$ with side length $r$.
In the following, fix $L>0$ (which will eventually tend to infinity) and $k\in\mathds{N}$ with $\delta=2^{-k}\ll 1$.
We now divide $\mathds{R}^d$  into small cubes of side length $\delta L$, denoted by $V_{\delta L}({\bm k})$ for ${\bm k}\in(\delta L)\mathds{Z}^d$.
Then we identify the cubes $V_{\delta L}({\bm k})$ as vertices ${\bm k}$ and call the resulting graph $G$.
By the self-similarity of the model, it is obvious that $G$ is also the critical long-range percolation model on $(\delta L)\mathds{Z}^d$.

We introduce the definitions of ``good'' cubes and ``good'' vertices as follows.

\begin{definition}\label{h-good}
For $s>0$, ${\bm z}\in\mathds{R}^d$, $\alpha\in(0,1)$ and  $b>0$, we say that a cube $V_{3s}({\bm z})$ is $(3s,\alpha, b)$-good if the following condition holds.
For any two different edges $\langle {\bm u}_i,{\bm v}_i\rangle\in\mathcal{E}$, $i=1,2$, with ${\bm u}_1\in V_s({\bm z})^c$, ${\bm v}_1\in V_s({\bm z})$, ${\bm u}_2\in V_{3s}({\bm z})$ and ${\bm v}_2\in V_{3s}({\bm z})^c$, we have $|{\bm v}_1-{\bm u}_2|\geq \alpha s$ and
$$
D({\bm v}_1,{\bm u}_2;V_{3s}({\bm z}))\geq (b\alpha s)^\theta.
$$
Additionally, we say that the tuple $({\bm u}_1,{\bm v}_1, {\bm u}_2,{\bm v}_2)$ forms a $V_{3s}({\bm z})$-special pair of edges if $\langle {\bm u}_i,{\bm v}_i\rangle\in\mathcal{E}$, $i=1,2$, with ${\bm u}_1\in V_s({\bm z})^c$, ${\bm v}_1\in V_s({\bm z})$, ${\bm u}_2\in V_{3s}({\bm z})$ and ${\bm v}_2\in V_{3s}({\bm z})^c$.
\end{definition}

We say that ${\bm k}\in (\delta L)\mathds{Z}^d$ is good if $V_{3\delta L}({\bm k})$ is $(3\delta L,\alpha, b)$-good.
The following lemma from \cite{DFH23+} shows that for suitable choices of $\alpha$ and $b$, ${\bm k}$ is good with arbitrarily high probability.

\begin{lemma}[{\cite[Corollary~3.8]{DFH23+}}]\label{probgood}
	For all $d\geq 1$, $\beta>0$ and all $p_0\in(0,1)$, there exist constants $\alpha,\ b>0$ {\rm(}depending only on $d,\beta$ and $p_0${\rm)} such that
	\begin{equation*}
		\mathds{P}[\text{${\bm k}$ is good}]=\mathds{P}[\text{$V_{3\delta L}({\bm k})$ is $(3\delta L, \alpha, b)$-good}]\geq 1-p_0.
	\end{equation*}
\end{lemma}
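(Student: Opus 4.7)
The plan is to reduce to a canonical scale via scale covariance, then control both clauses of Definition \ref{h-good} by a union bound, choosing the parameters $\alpha$ and $b$ sequentially.

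First I would invoke the translation and scale covariance of the $\beta$-LRP metric (\cite[Axiom IV in Section 1.3]{DFH23+}): the law of the good-cube event for $V_{3s}(\bm z)$ agrees after rescaling with that for $V_3(\bm 0)$, so one may fix $s=1$ and $\bm z=\bm 0$. Decompose the failure event as $\mathcal B_1\cup \mathcal B_2$, where $\mathcal B_1=\{\exists\text{ a }V_3(\bm 0)\text{-special pair with } |\bm v_1-\bm u_2|<\alpha\}$ and $\mathcal B_2=\{\exists\text{ a }V_3(\bm 0)\text{-special pair with } |\bm v_1-\bm u_2|\geq \alpha \text{ and } D(\bm v_1,\bm u_2;V_3(\bm 0))<(b\alpha)^\theta\}$. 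The target is to choose $\alpha$ so that $\mathds{P}[\mathcal B_1]\leq p_0/2$, and then $b$ (depending on $\alpha$) so that $\mathds{P}[\mathcal B_2]\leq p_0/2$.

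For $\mathcal B_1$, the key geometric observation is that in a close pair $|\bm v_1-\bm u_2|<\alpha$, the endpoint $\bm u_2\in V_3(\bm 0)$ is forced within distance $\alpha$ of $V_1(\bm 0)$, so its companion $\bm v_2\in V_3(\bm 0)^c$ lies at distance $\geq 1-\alpha$, making $\langle \bm u_2,\bm v_2\rangle$ a macroscopic long edge; the analogous statement holds for $\langle \bm u_1,\bm v_1\rangle$. The length constraint forces both endpoint intensities to be uniformly bounded by a constant $C(d,\beta)$. Computing the expected number of such pairs via the long-edge intensity $\beta|\bm u-\bm v|^{-2d}\d\bm u\d\bm v$ then gives a bound of order $\alpha^d$, so that Markov's inequality yields $\mathds{P}[\mathcal B_1]\leq C(d,\beta)\alpha^d$, and one chooses $\alpha$ small.

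For $\mathcal B_2$, cover $V_3(\bm 0)$ by $O(\alpha^{-d})$ cubes of side $\alpha/10$. For two such cubes $Q,Q'$ with centres at Euclidean distance $\geq \alpha/2$, scale covariance rewrites $D(Q,Q';V_3(\bm 0))$ as $\alpha^\theta$ times a random variable whose law is tight in $\alpha$ and almost surely positive (by \cite[Theorem 1.4]{DFH23+}). Hence $\mathds{P}[D(Q,Q';V_3(\bm 0))<(b\alpha)^\theta]\leq \psi(b)$ for some function $\psi$ with $\psi(b)\to 0$ as $b\to 0$, uniformly in $\alpha$. Applying the union bound over the $O(\alpha^{-2d})$ pairs of cubes and choosing $b$ small enough (depending on $\alpha$) yields $\mathds{P}[\mathcal B_2]\leq p_0/2$.

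The main obstacle is the bound on $\mathcal B_1$: because nearest-neighbour edges crossing $\partial V_1(\bm 0)$ are always present, a naive two-sided endpoint-intensity calculation diverges at the boundary. The saving idea is the asymmetric observation above --- in any close pair the $\bm u_2$-endpoint is forced to come from a macroscopic long edge, and this extra length constraint tames the boundary-layer singularity. Making this asymmetric cancellation quantitative, and carefully separating the contribution of short edges from the long-edge Poisson structure, is where most of the technical work lives.
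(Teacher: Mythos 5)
This lemma is not proved in the paper at all: it is imported verbatim as \cite[Corollary~3.8]{DFH23+}, so there is no in-paper argument to compare yours against. Judged on its own, your sketch is a sensible reconstruction of what the proof must look like: the reduction to a fixed scale by scale covariance, the split of the failure event into a ``too close'' clause $\mathcal B_1$ and a ``too short in $D$'' clause $\mathcal B_2$, the union bound over a deterministic grid of $O(\alpha^{-2d})$ cube pairs for $\mathcal B_2$ (which correctly sidesteps any conditioning on the random endpoints $\bm v_1,\bm u_2$), and the sequential choice of $\alpha$ then $b=b(\alpha)$ all match the structure one expects, and you correctly identify the genuine difficulty, namely that edges crossing $\partial V_1(\bm 0)$ accumulate at the boundary so a symmetric intensity computation diverges.

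Two corrections. First, in the continuum model there are no nearest-neighbour edges; the boundary divergence comes from the non-integrability of the Poisson intensity $\beta|\bm u-\bm v|^{-2d}$ near the diagonal, which produces a.s.\ infinitely many short edges crossing $\partial V_1(\bm 0)$ with $\bm v_1$ accumulating at the boundary. Second, your quantitative claim $\mathds P[\mathcal B_1]\le C(d,\beta)\alpha^d$ is too optimistic: the dominant contribution is the boundary layer. One should split on whether $\bm u_2$ lies within distance $2\alpha$ of $\partial V_1(\bm 0)$: in that layer the constraint $\bm v_2\in V_3(\bm 0)^c$ forces $|\bm u_2-\bm v_2|\gtrsim 1$, so the probability that such an $e_2$ exists there is only $O(\alpha)$ (and one does not attempt to count the $e_1$'s, whose expected number is infinite); outside the layer the $e_1$-intensity near $B_\alpha(\bm u_2)$ is finite and the expected number of close pairs is $O(\alpha\log(1/\alpha))$ in $d=1$ and $O(\alpha)$ for $d\ge 2$. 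The resulting bound still tends to $0$ as $\alpha\to 0$, which is all the lemma needs, so the slip does not invalidate the strategy.
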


Now, we turn our attention to paths in the original LRP model and the boxes that are intersected by such paths.
For each path $P\subset[-L,L]^d$ in the $\beta$-LRP model (including $D$-geodesics), we define
\begin{equation*}
	\Lambda_{\delta,L}(P)=\left\{{\bm k}\in(\delta L)\mathds{Z}^d:P\cap V_{\delta L}({\bm k})\neq \emptyset\right\}\quad\text{and}\quad  N_{\delta,L}(P)=\# \Lambda_{\delta,L}(P).
\end{equation*}
Additionally, for any subset $\Lambda\subset G$ in the renormalized graph $G$, let $\mathcal{C}_{\delta,L}(\Lambda)$ be the set of paths $P\subset [-L,L]^d$ in the original model such that $\Lambda_{\delta,L}(P)=\Lambda$.

To prove the upper bound on the Hausdorff dimension of $P$, we need to provide an upper bound for $N_{\delta,L}(P)$.
It is important to note that $\Lambda_{\delta,L}(P)$ forms a connected subgraph of $G$ by the renormalization and the fact that $P$ is connected.
Therefore, we need to derive some estimates for a fixed connected subset.
To achieve this, we give the following notion of a ``good'' subset, which can be viewed as a generalization of \cite[Definition~3.12]{DFH23+}.
\begin{definition}\label{def-good-tree}
	For any $\alpha, b>0$ and  any connected set $\Lambda=\{{\bm k}_1,\cdots,{\bm k}_n\}\subset G$, we say that $\Lambda$ is $(\alpha,b)$-good if at least $n/(2\cdot 3^d)$ of its indices $j\in[1,n]_\mathds{Z}$ satisfy the following conditions:
	\begin{enumerate}
		\item[(1)] For each $P\in\mathcal{C}_{\delta,L}(\Lambda)$, if $({\bm u}_{{\bm k}_j,1}, {\bm v}_{{\bm k}_j,1},{\bm u}_{{\bm k}_j,2},{\bm v}_{{\bm k}_j,2})$ is a $V_{3\delta L}({\bm k}_j)$-special pair of edges such that $P$ first uses $\langle {\bm u}_{{\bm k}_j,1}, {\bm v}_{{\bm k}_j,1}\rangle$ to enter $V_{\delta L}({\bm k}_j)$ and then does not leave $V_{3\delta L}({\bm k}_j)$ until it uses $\langle {\bm u}_{{\bm k}_j,2}, {\bm v}_{{\bm k}_j,2}\rangle$, then it satisfies
		\begin{equation*}
			|{\bm v}_{{\bm k}_j,1}- {\bm u}_{{\bm k}_j,2}|\geq \alpha\delta L \quad \text{and}\quad  D({\bm v}_{{\bm k}_j,1}, {\bm u}_{{\bm k}_j,2}; V_{3\delta L}({\bm k}_j)) \geq (b\alpha \delta L)^\theta.	
		\end{equation*}
		
		\item[(2)] For any two distinct indices ${j}_1$ and ${ j}_2$, we have ${\bm k}_{{ j}_1} - {\bm k}_{{ j}_2} \in (3\delta L)\mathds{Z}^d$.
	\end{enumerate}
	If $\Lambda$ is not $(\alpha,b)$-good, then we say that $\Lambda$ is $(\alpha,b)$-bad.
\end{definition}

The following lemma is a slightly stronger version  of \cite[Lemma~3.13]{DFH23+}.

\begin{lemma}\label{good-tree}
	For any $n\in \mathds{N}$ and $p>0$, let $\alpha$ and $b $ be chosen from Lemma~\ref{probgood} with $p_0=p$.
Then for any connected set $\Lambda=\{{\bm k}_1,\cdots,{\bm k}_n\}\subset G$, the probability that $\Lambda$ is $(\alpha,b)$-good is at least $1-(2p^{1/(2\cdot 3^d)})^n$.
\end{lemma}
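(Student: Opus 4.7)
\emph{Plan.} The strategy is to extract a well-separated subfamily of $\Lambda$ by pigeonhole, apply Lemma~\ref{probgood} to the resulting (essentially) disjoint cubes, and close with a binomial tail bound. This parallels the proof of \cite[Lemma~3.13]{DFH23+}. Since each $\bm k_i\in(\delta L)\mathds{Z}^d$, the residue map $\bm k_i\mapsto (\bm k_i/(\delta L))\bmod 3\in (\mathds{Z}/3\mathds{Z})^d$ splits $\{\bm k_1,\ldots,\bm k_n\}$ into at most $3^d$ classes, and the largest class $\Lambda'=\{\bm k_{j_1},\ldots,\bm k_{j_m}\}$ has size $m\geq n/3^d$. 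Any two distinct members of $\Lambda'$ differ by a nonzero element of $(3\delta L)\mathds{Z}^d$, hence $\|\bm k-\bm k'\|_\infty\geq 3\delta L$; in particular every subset $J\subset\{j_1,\ldots,j_m\}$ automatically satisfies condition~(2) of Definition~\ref{def-good-tree}, and the cubes $\{V_{3\delta L}(\bm k)\}_{\bm k\in\Lambda'}$ are pairwise essentially disjoint.

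\emph{Individual goodness and independence.} A direct comparison of Definitions~\ref{h-good} and~\ref{def-good-tree} shows that condition~(1) of Definition~\ref{def-good-tree} at index $j$ is exactly the event
\[
G_{\bm k_j}:=\{V_{3\delta L}(\bm k_j)\text{ is }(3\delta L,\alpha,b)\text{-good}\}.
\]
Applying Lemma~\ref{probgood} with $p_0=p$ yields constants $\alpha,b>0$ with $\mathds{P}[G_{\bm k}]\geq 1-p$ for every $\bm k$. Because the cubes $\{V_{3\delta L}(\bm k)\}_{\bm k\in\Lambda'}$ are disjoint and the LRP edges arise from a Poisson point process on $\mathds{R}^d\times\mathds{R}^d$, the events $\{G_{\bm k}\}_{\bm k\in\Lambda'}$ are (essentially) jointly independent.

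\emph{Binomial tail.} Let $X=\#\{\bm k\in\Lambda':G_{\bm k}^c\text{ occurs}\}$. If $X\leq\lfloor m/2\rfloor$, then the good vertices in $\Lambda'$ form an index set $J$ of cardinality $\geq\lceil m/2\rceil\geq n/(2\cdot 3^d)$ on which both conditions of Definition~\ref{def-good-tree} hold, so $\Lambda$ is $(\alpha,b)$-good. Consequently,
\[
\mathds{P}[\Lambda\text{ is }(\alpha,b)\text{-bad}]\leq \mathds{P}\bigl[X\geq\lceil m/2\rceil\bigr]\leq\binom{m}{\lceil m/2\rceil}p^{\lceil m/2\rceil}\leq 2^m p^{m/2},
\]
and using $n/3^d\leq m\leq n$ together with $p<1$ we obtain $2^m p^{m/2}\leq 2^n p^{n/(2\cdot 3^d)}=(2p^{1/(2\cdot 3^d)})^n$, which is the claimed bound.

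\emph{Main obstacle.} The one delicate point is the independence asserted in the second paragraph: a single ``cross'' edge with endpoints in two distinct cubes of $\Lambda'$ can in principle affect the goodness of both. The resolution, mirroring \cite[Lemma~3.13]{DFH23+}, is to condition on these cross-edges (which are sparse since the cubes are at $\ell^\infty$-distance $\geq 3\delta L$) and to verify that the conditional failure probability of $G_{\bm k}$ still obeys the bound in Lemma~\ref{probgood}, after a harmless adjustment of $\alpha$ and $b$. Once this decoupling is in place the binomial tail in Step~3 runs through unchanged.
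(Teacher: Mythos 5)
Your overall architecture matches the paper's: partition $\Lambda$ by residues modulo $3$ in each coordinate, pass to the largest class $\Lambda'$ of size $m\geq n/3^d$ (whose cubes $V_{3\delta L}({\bm k})$ are disjoint and which automatically satisfies condition (2) of Definition~\ref{def-good-tree}), and then show that with high probability at least half of these cubes are good. The gap is in the decoupling step. The events $G_{\bm k}$ are genuinely not independent: $(3\delta L,\alpha,b)$-goodness of $V_{3\delta L}({\bm k})$ quantifies over edges $\langle {\bm u}_1,{\bm v}_1\rangle$ with ${\bm u}_1\in V_{\delta L}({\bm k})^c$ and over edges $\langle {\bm u}_2,{\bm v}_2\rangle$ with ${\bm v}_2\in V_{3\delta L}({\bm k})^c$, i.e.\ over edges with one endpoint arbitrarily far outside the cube; a single long edge can serve simultaneously as the entering edge for one cube of $\Lambda'$ and the exiting (or entering) edge for another. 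Your ``main obstacle'' paragraph acknowledges exactly this, but the proposed repair --- condition on the cross-edges and assert that the conditional failure probability still satisfies Lemma~\ref{probgood} ``after a harmless adjustment of $\alpha$ and $b$'' --- is neither carried out nor obviously workable: Lemma~\ref{probgood} is an unconditional estimate, and conditioning on the presence and position of cross-edges changes the law of the relevant configuration in a way that would require a separate argument.

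The paper sidesteps independence entirely. It observes that if $\Lambda$ is $(\alpha,b)$-bad, then at least $n/(2\cdot 3^d)$ of the cubes indexed by the largest residue class fail to be $(3\delta L,\alpha,b)$-good \emph{with disjoint witnesses}, and then applies the van den Berg--Kesten inequality \cite{BK85} to bound the probability of this disjoint occurrence by the product $p^{n/(2\cdot 3^d)}$, the factor $2^n$ accounting for the choice of the failing index set. If you replace your independence claim by this disjoint-occurrence/BK argument, the remainder of your computation (the union bound over index sets and the final estimate $2^m p^{m/2}\leq (2p^{1/(2\cdot 3^d)})^n$) goes through as written.
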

\begin{proof}
	For any ${\bm i}\in(\delta L)\{0,1,2\}^d$, let $\Lambda^{(\bm i)}=\{{\bm k}\in\Lambda:{\bm k}-{\bm i}\in (3\delta L)\mathds{Z}^d\}$. Then the collection $\{\Lambda^{({\bm i})}:{\bm i}\in(\delta L)\mathds{Z}^d\}$ forms a partition of $\Lambda$. By the Pigeonhole's Principle, we can find an ${\bm i}^*\in(\delta L)\{0,1,2\}^d$ such that $|\Lambda^{({\bm i}^*)}|\geq N/3^d$. Note that the sets $V_{3\delta L}({\bm k})$ for all ${\bm k} \in \Lambda({\bm i}^*)$ are disjoint.

Next, we assume that $\Lambda$ is $(\alpha,b)$-bad. Then by the Definition \ref{def-good-tree}, it is clear that there must exist $P\in\mathcal{C}_{\delta,L}(\Lambda)$ and at least $n/(2\cdot 3^d)$ indices ${\bm k}$'s in $\Lambda^{({\bm i}^*)}$ such that for these corresponding $V_{3\delta L}(\bm k)$-special pairs of edges $({\bm u}_{{\bm k},1}, {\bm v}_{{\bm k},1},{\bm u}_{{\bm k},2},{\bm v}_{{\bm k},2})$, we have either
	\begin{equation*}
		\text{ $|{\bm v}_{{\bm k},1}-{\bm u}_{{\bm k},2}|<\alpha \delta L$\quad or\quad  $D({\bm v}_{{\bm k},1},{\bm u}_{{\bm k},2}; V_{3\delta L}(\bm k))<(b\alpha\delta L)^\theta$}.
	\end{equation*}
	This implies that there are at least $n/(2\cdot 3^d)$ indices ${\bm k}$ in $\Lambda^{({\bm i}^*)}$ such that $V_{3\delta L}(\bm k)$ is not $(3\delta L,\alpha,b)$-good, with disjoint witnesses. Therefore, from Lemma~\ref{probgood} and the BK inequality \cite{BK85}, we obtain
	\begin{equation*}
		\mathds{P}\left[\Lambda\text{ is $(\alpha, b)$-bad}\right]\leq 2^np^{n/(2\cdot 3^d)}.
	\end{equation*}
	This completes the proof.
\end{proof}

We also need the following estimate for the number of possible connected subset $\Lambda\subset G$, as stated in \cite[Lemma~3.2, (24)]{Ba23}.
For $k\in\mathds{N}$ and ${\bm i}\in(\delta L)\mathds{Z}^d$, let $\mathcal{CS}_k({\bm i})$ denote the collection of all connected subsets of the graph $G$ that have size $k$ and include the vertex ${\bm i}$. Define $\text{deg}(\bm i)$ as the degree of $\bm i$ in the graph $G$, that is,
$\text{deg}(\bm i)=\#\{\bm j\in (\delta L)\mathds{Z}^d:\ \bm j\sim \bm i\}$.

\begin{lemma}[{\cite[Lemma~3.2, (24)]{Ba23}}]\label{num-tree}
	For $d\geq 1$, $\beta > 0$, $k\in\mathds{N}$ and ${\bm i}\in(\delta L)\mathds{Z}^d$, $\mathds{E}[|\mathcal{CS}_k({\bm i})|] \leq (4\mu_\beta)^k$, where $\mu_\beta := \mathds{E}[\mathrm{deg}({\bm i})]=\mathds{E}[\mathrm{deg}({\bm 0})]$.
\end{lemma}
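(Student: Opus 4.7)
The plan is to prove Lemma \ref{num-tree} by a standard spanning-tree encoding argument combined with a lazy edge-exposure of $G$. Every connected subgraph of size $k$ containing $\bm i$ admits at least one spanning tree rooted at $\bm i$, so it suffices to bound the expected number of rooted subtrees of size $k$ based at $\bm i$ whose edges all appear in $G$. I would enumerate these via their depth-first-search walks: endowing each internal node with a canonical ordering of its children makes each plane rooted tree of size $k$ correspond to a DFS walk of length $2(k-1)$, consisting of $k-1$ \emph{forward} steps (descending to a new child) and $k-1$ \emph{backward} steps (returning to the parent).

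A DFS walk decomposes into its \emph{shape} (the pattern of forward/backward steps) and the sequence of vertex choices made at each forward step. The number of possible shapes is the Dyck-type count $\binom{2(k-1)}{k-1} \leq 4^{k-1}$. For each fixed shape I would estimate the expected number of realizations by running a lazy DFS exploration of $G$: at each forward step the walker sits at some vertex $\bm v$ (determined by the history) and must pick a neighbor of $\bm v$ lying outside the current tree. Because the edges of the LRP model are mutually independent, the edges from $\bm v$ to currently-unexplored vertices have not yet been queried by the lazy exploration and therefore retain their original Bernoulli marginals conditional on the history. Hence the conditional expected number of available choices is at most $\mathds{E}[\mathrm{deg}(\bm v)] = \mu_\beta$ by translation invariance, and iterating the tower property over the $k-1$ forward steps gives an expected realization count of at most $\mu_\beta^{k-1}$ per shape.

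Combining the two bounds yields
\begin{equation*}
\mathds{E}[|\mathcal{CS}_k(\bm i)|] \;\leq\; 4^{k-1}\,\mu_\beta^{k-1} \;\leq\; (4\mu_\beta)^k,
\end{equation*}
as claimed. The main obstacle is making the conditional-expectation step fully rigorous: one must organize the DFS so that fresh edges incident to a newly arrived-at vertex are only queried upon arrival, and verify that exposure of edges earlier in the exploration does not bias the marginals of those unqueried edges. This ultimately rests on the product structure of the LRP edge set. A purely enumerative alternative, should the lazy exploration become delicate, is to sum over all labeled plane rooted trees and apply linearity of expectation directly to the product of edge probabilities, arriving at the same bound.
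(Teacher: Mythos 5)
Your argument is correct: the paper itself does not prove this lemma but imports it from \cite[Lemma~3.2]{Ba23}, and your spanning-tree/DFS encoding with the Catalan-type bound $\binom{2(k-1)}{k-1}\leq 4^{k-1}$ on shapes and the factor $\mu_\beta^{k-1}$ from iterating the expected degree over forward steps is precisely the standard proof given there. The ``purely enumerative'' variant you mention at the end (summing products of edge probabilities over embeddings of a fixed shape, which factorizes step by step since the $k-1$ tree edges are distinct and independent) is indeed the cleanest way to avoid any delicacy in the lazy-exposure formulation.
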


Now we present the
\begin{proof}[Proof of upper bound in Theorem~\ref{mr-dim}]
	We begin by fixing $\varepsilon>0, L>0$ and $\delta>0$, and selecting $p\in(0,1/2)$ sufficiently small such that
	\begin{equation}\label{p}
		16\mu_\beta p^{1/(2\cdot 3^d)}\leq 1/2.
	\end{equation}
	Let $\alpha,b$ be chosen from Lemma~\ref{probgood} with $p_0=p$.
Note that $p,\alpha,b$ all depend only  on $d$ and $\beta$.

	For convenience, let $\mathcal{P}_L$ be the collection of all $D$-geodesics contained in $[-L,L]^d$.
We denote $F_{\delta, L,0}$ as the event that
	\begin{equation*}\label{def-F0}
 \sup_{P\in\mathcal{P}_L} N_{\delta,L}(P)>\delta^{-\theta-\varepsilon}
 \end{equation*}
and let $F_{\delta,L,1}$ be the event that all
	$$\Lambda\in\bigcup_{{\bm i}\in(\delta L)\mathds{Z}^d\cap [-L,L]^d}\bigcup_{k>\delta^{-\theta-\varepsilon}}\mathcal{CS}_k({\bm i})$$
	are $(\alpha,b)$-good.
	Then  on the event $F_{\delta, L,0}\cap F_{\delta,L,1}$, we have
	\begin{equation*}
		\mathrm{diam}([-L,L]^d;D)\geq \sup_{P\in \mathcal{P}_L}\mathrm{len}(P)\geq \sup_{P\in \mathcal{P}_L} \left\{N_{\delta, L}(P) (b\alpha \delta L)^\theta/(2\cdot 3^d)\right\}
\geq
\delta^{-\varepsilon} (b\alpha)^\theta  L^\theta/(2\cdot 3^d).
	\end{equation*}
This implies that
	\begin{equation}\label{ineq-step1}
		\begin{aligned}			\mathds{P}[F_{\delta,L,0}]&=\mathds{P}\left[\sup_{P\in\mathcal{P}_L}N_{\delta,L}(P)>\delta^{-\theta-\varepsilon}\right]\\
&\leq \mathds{P}[F_{\delta,L,1}^c]+\mathds{P}[F_{\delta, L,0}\cap F_{\delta,L,1}]\\
			&\leq \mathds{P}[F_{\delta,L,1}^c]+\mathds{P}\left[\mathrm{diam}([-L,L]^d;D)\geq \delta^{-\varepsilon} (b\alpha)^\theta L^\theta/(2\cdot 3^d)\right].
		\end{aligned}
	\end{equation}

Next, we consider an upper bound for the term $\mathds{P}[F_{\delta,L,1}^c]$. The approach is similar to that used in \cite[Lemma~3.11]{DFH23+}. Specifically,
for any $\bm i\in(\delta L)\mathds{Z}^d\cap [-L,L]^d$, let $M_{\bm i}$ be the event that there exists $m\geq \delta^{-\theta-\varepsilon}$ such that $|\mathcal{CS}_m(\bm i)|\geq (8\mu_\beta)^m$.
By Lemma~\ref{num-tree} and Markov's inequality, we have
	\begin{equation*}\label{PMi}
		\begin{aligned}
			\mathds{P}[M_{\bm i}]&\leq \sum_{m>\delta^{-\theta-\varepsilon}}\mathds{P}\left[|\mathcal{CS}_m(\bm i)|\geq (8\mu_\beta)^m\right]\\
			&\leq \sum_{m>\delta^{-\theta-\varepsilon}}(8\mu_\beta)^{-m}\mathds{E}[|\mathcal{CS}_m(\bm i)|] \leq \sum_{m>\delta^{-\theta-\varepsilon}}2^{-m}\leq 2^{-\delta^{-\theta-\varepsilon}+1}.
		\end{aligned}
	\end{equation*}
 Combining this and Lemma~\ref{good-tree}, we obtain
		\begin{align}
			\mathds{P}[F_{\delta,L,1}^c]&\leq \sum_{\bm i\in(\delta L)\mathds{Z}^d\cap [-L,L]^d}\mathds{P}[M_{\bm i}]\nonumber\\
&\quad +\sum_{\bm i\in(\delta L)\mathds{Z}^d\cap [-L,L]^d}\sum_{m>\delta^{-\theta-\varepsilon}}
\mathds{E}\left[\I_{M_{\bm i}^c}\sum_{\Lambda\in\mathcal{CS}_m(\bm i)}\mathds{P}\left[\Lambda\text{ is $(\alpha,b)$-good}\ |G\right]\right]\label{F1C}\\
			&\leq (2\delta^{-1}+1)^d 2^{-\delta^{-\theta-\varepsilon}+1}+ \sum_{\bm i\in(\delta L)\mathds{Z}^d\cap [-L,L]^d}\sum_{m>\delta^{-\theta-\varepsilon}} (8\mu_\beta)^m (2p^{1/(2\cdot 3^d)})^m\nonumber\\
			&\leq (2\delta^{-1}+1)^d 2^{-\delta^{-\theta-\varepsilon}+1}+(2\delta^{-1}+1)^d \sum_{m>\delta^{-\theta-\varepsilon}} 2^{-m}\leq (2\delta^{-1}+1)^d 2^{-\delta^{-\theta-\varepsilon}+2},\nonumber
		\end{align}
where the last inequality uses \eqref{p}.

In addition, according to the tightness of $D$ in \cite[Axiom V in Section 1.3]{DFH23+}, there exists a constant $\widetilde{M}<\infty$ depending only on $d$ and $\beta$ such that
	\begin{equation*}
		\mathds{E}\left[\exp\left\{\frac{\mathrm{diam}([-L,L]^d;D)}{L^\theta}\right\}\right]\leq \widetilde{M}.
	\end{equation*}
	Combining this with Markov's inequality yields that
	\begin{equation}\label{Dtail}
		\mathds{P}\left[\mathrm{diam}([-L,L]^d;D)\geq \delta^{-\varepsilon}(b\alpha)^\theta(2\cdot3^d)^{-1}L^\theta\right]\leq \widetilde{M}\exp\left\{-\delta^{-\varepsilon}(b\alpha)^\theta(2\cdot3^d)^{-1}\right\}.
	\end{equation}

	By applying \eqref{Dtail} and \eqref{F1C} to \eqref{ineq-step1}, we obtain that for each $\varepsilon>0, L>0$ and $\delta>0$
	\begin{equation}\label{ineq-step2}
		\mathds{P}[F_{\delta,L,0}]\leq (2\delta^{-1}+1)^d 2^{-\delta^{-\theta-\varepsilon}+2}+\widetilde{M}\exp\left\{-\delta^{-\varepsilon}(b\alpha)^\theta(2\cdot3^d)^{-1}\right\}.
	\end{equation}

Next, we turn to deriving the upper bound on the Hausdorff dimension from \eqref{ineq-step2}. Indeed, from \eqref{ineq-step2} we see that for any fixed $\varepsilon>0$ and $L>0$
	$$
\sum_{k=1}^\infty \mathds{P}\left[F_{2^{-k}, L,0}\right]<\infty.
$$
Therefore, the Borel-Cantelli Lemma implies that almost surely, there exists a random $k_0\in\mathds{N}$ (depending only on $d, \beta$, $\varepsilon$ and $L$) such that for any $P\in\mathcal{P}_L$ and any $k>k_0$, $N_{2^{-k},L}(P)\leq (2^{k})^{\theta+\varepsilon}$.
Thus, for each $D$-geodesic $P\subset[-L,L]^d$ and each $k>k_0$, we have (recalling that $C_{\Delta}(Y,D)$ is defined in \eqref{hausdorff})
	\begin{equation*}
		C_{\theta+2\varepsilon}(\mathrm{Range}(P))\leq \sum_{\bm k\in\Lambda_{2^{-k},L}(P)} (2^{-k})^{\theta+2\varepsilon}\leq 2^{-k\varepsilon}\to 0\quad \text{as }k\to\infty.
	\end{equation*}
Combining this with the definition of the Hausdorff dimension, we conclude that for any fixed $\varepsilon>0$ and $L>0$, almost surely, $\mathrm{dim}_{\mathrm{H}}(\mathrm{Range}(P))\leq \theta+2\varepsilon$  for all $D$-geodesics $P\subset[-L,L]^d$.
Letting $\varepsilon\to 0$ and then $L\to \infty$ yields the desired upper bound.
\end{proof}

\bigskip

\noindent{\bf Acknowledgement.} \rm
J.\ Ding is partially supported by NSFC Key Program Project No.\ 12231002 and the Xplorer prize. L.-J. Huang is partially supported by National Key R\&D Program of China No.\ 2022YFA1006003, and the National Natural Science Foundation of China No.\ 12471136.

\bibliographystyle{plain}
\bibliography{geodesic-ref}

\end{document}